\documentclass[12pt]{amsart}
\usepackage[foot]{amsaddr}
\usepackage[utf8]{inputenc}
\usepackage[margin=1in]{geometry}
\usepackage{amsfonts, amsmath, amssymb, amsthm}
\usepackage{enumitem}
\usepackage[numbers]{natbib}

\usepackage{graphicx}
\usepackage{xcolor}
\usepackage{tikz}
\usetikzlibrary{calc}
\usepackage[numbers]{natbib}
\usepackage[
  colorlinks=true,
  citecolor=blue,
  linkcolor=blue,
  backref=false
]{hyperref}
\usepackage{cleveref}

\theoremstyle{plain}
\newtheorem{theorem}{Theorem}[section]
\newtheorem{lemma}[theorem]{Lemma}
\newtheorem{proposition}[theorem]{Proposition}
\newtheorem{corollary}[theorem]{Corollary}
\newtheorem{conjecture}[theorem]{Conjecture}

\theoremstyle{remark}
\newtheorem{remark}[theorem]{Remark}

\theoremstyle{definition}
\newtheorem{definition}[theorem]{Definition}
\newtheorem{example}[theorem]{Example}
\newtheorem{question}[theorem]{Question}

\AddToHook{env/theorem/begin}
  {\crefalias{theorem}{theorem}}
\AddToHook{env/proposition/begin}
  {\crefalias{theorem}{proposition}}
\AddToHook{env/lemma/begin}
  {\crefalias{theorem}{lemma}}
\AddToHook{env/corollary/begin}
  {\crefalias{theorem}{corollary}}
\AddToHook{env/conjecture/begin}
  {\crefalias{theorem}{conjecture}}
\AddToHook{env/definition/begin}
  {\crefalias{theorem}{definition}}
\AddToHook{env/example/begin}
  {\crefalias{theorem}{example}}
\AddToHook{env/remark/begin}
  {\crefalias{theorem}{remark}}
\AddToHook{env/question/begin}
  {\crefalias{theorem}{question}}

\crefname{theorem}{theorem}{theorems}
\Crefname{theorem}{Theorem}{Theorems}
\crefname{lemma}{lemma}{lemmas}
\Crefname{lemma}{Lemma}{Lemmas}
\crefname{proposition}{proposition}{propositions}
\Crefname{proposition}{Proposition}{Propositions}
\crefname{corollary}{corollary}{corollaries}
\Crefname{corollary}{Corollary}{Corollaries}
\crefname{conjecture}{conjecture}{conjectures}
\Crefname{conjecture}{Conjecture}{Conjectures}
\crefname{remark}{remark}{remarks}
\Crefname{remark}{Remark}{Remarks}
\crefname{definition}{definition}{definitions}
\Crefname{definition}{Definition}{Definitions}
\crefname{example}{example}{examples}
\Crefname{example}{Example}{Examples}
\crefname{question}{question}{questions}
\Crefname{question}{Question}{Questions}

\newcommand{\C}{\mathbb{C}}
\newcommand{\R}{\mathbb{R}}
\newcommand{\bbS}{\mathbb{S}}

\newcommand{\ba}{\mathbf{a}}
\newcommand{\bb}{\mathbf{b}}
\newcommand{\bc}{\mathbf{c}}
\newcommand{\be}{\mathbf{e}}
\newcommand{\bi}{\mathbf{i}}
\newcommand{\bj}{\mathbf{j}}
\newcommand{\bu}{\mathbf{u}}
\newcommand{\bv}{\mathbf{v}}
\newcommand{\bw}{\mathbf{w}}
\newcommand{\bx}{\mathbf{x}}

\newcommand{\cS}{\mathcal{S}}
\newcommand{\cT}{\mathcal{T}}
\newcommand{\cU}{\mathcal{U}}
\newcommand{\T}{\mathcal{T}}

\newcommand{\rO}{\mathrm{O}}
\newcommand{\rSO}{\mathrm{SO}}
\newcommand{\fS}{\mathfrak{S}}

\DeclareMathOperator{\diag}{diag}
\DeclareMathOperator{\rank}{rank}
\DeclareMathOperator{\batorank}{bato-rank}
\DeclareMathOperator{\matmul}{MatMul}

\newcommand{\Latin}{\mathrm{Latin}}
\newcommand{\MaxLatin}{\mathrm{MaxLatin}}

\definecolor{tensorfill}{gray}{0.88}
\definecolor{gridline}{gray}{0.66}
\definecolor{activecell}{gray}{0.88}

\begin{document}

\title[Unifying singular value decompositions of tensors]{Unifying singular value decompositions of tensors\\via aligned orthogonality}
\author{\'{A}lvaro Ribot}
\address{Harvard University}
\email{aribotbarrado@g.harvard.edu}

\keywords{tensor decomposition, basis-aligned two-orthogonal (bato) tensors, Latin squares}
\subjclass{15A69, 15A18, 14N07, 05B15}

\begin{abstract}
    We study basis-aligned two-orthogonal (bato) tensors, which can be written as a sum of critical rank-one approximations whose factors are singular vectors of their flattenings. As such, bato tensors admit a Tucker decomposition and a canonical polyadic decomposition that are related to each other. 
    We prove that generic bato decompositions are identifiable and that their truncations  are critical low-bato-rank approximations.
    We also compute the dimension of the set of bato tensors, and identify the irreducible components of its Zariski closure with isotopy classes of maximal partial Latin hyperrectangles.
    Many important tensors are bato, such as determinants, matrix multiplication tensors, and other structure tensors of algebras. 
\end{abstract}
\maketitle

\section{Introduction}

The singular value decomposition (SVD) is arguably the most important tool in applied linear algebra. It says that any matrix $M \in \R^{m \times n}$ admits a factorization of the form $M = U S V^\top$, where $U \in \R^{m \times m}$ and $V \in \R^{n\times n}$ are orthogonal matrices and $S \in \R^{m \times n}$ is a diagonal matrix. Equivalently, the SVD can be expressed as $M = \sum_{i} s_{ii} \bu_i \otimes \bv_i$, where~$\bu_i$ and~$\bv_i$ are the $i$-th columns of $U$ and $V$, respectively.
The relevance of the SVD across sciences comes from the following properties. The decomposition is unique, provided that the nonzero singular values $s_{ii}$ are distinct. The number of summands is the rank of~$M$. And a best low-rank approximation of $M$ is given by truncating the decomposition to its largest singular values \cite{eckart1936approximation}.

There are two natural ways to extend the SVD to higher-order tensors, which come from the two interpretations in the matrix case. First, in the multiplicative core-based expression $(U, V) \cdot S :=  USV^\top$, the matrices $U$ and $V$ perform an orthogonal change of basis along the first and second factors of a core matrix $S$ that is diagonal. The Tucker decomposition~\cite{de2000multilinear} writes a tensor $\T \in \R^{m \times n \times p}$ as $\T = (A, B, C) \cdot \cS$, where the matrices $A \in \R^{m \times m}$, $B \in \R^{n \times n}$ and $C \in \R^{p \times p}$ perform an orthogonal change of basis along the first, second and third factors of a core tensor $\cS \in \R^{m \times n \times p}$ that has orthogonal slices.
Second, in the additive expression~$\sum_{i} s_{ii} \bu_i \otimes \bv_i$, the vectors $\bu_i$ and $\bv_i$ give a short sum of rank-one terms. The canonical polyadic (CP) decomposition writes a tensor~$\T \in \R^{m \times n \times p}$ as~$\T = \sum_{i=1}^r \bu_i \otimes \bv_i \otimes \bw_i$ with $\bu_i \in \R^{m}$, $\bv_i \in \R^{n}$ and $\bw_i \in \R^p$. The minimum number of summands $r$ in a CP decomposition of $\cT$ is its rank. After normalizing the factor vectors and absorbing their norms into scalar coefficients, we may write CP decompositions as $\cT = \sum_{i=1}^r s_i \bu_i \otimes \bv_i \otimes \bw_i$ with $s_i \in \R \setminus \{0\}$ and $\|\bu_i\| = \| \bv_i\| = \| \bw_i\| = 1$.

Every tensor admits a Tucker and a CP decomposition. However, unlike for matrices, for general tensors there is \emph{no} relation between the columns of the matrices $A, B, C$ in a Tucker decomposition and the rank-one terms $\bu_i \otimes \bv_i \otimes \bw_i$ in a CP decomposition. To have such a connection, one must impose additional sparsity assumptions on the core tensor $\cS$. Indeed, a Tucker decomposition can be expressed as $\cT  = (A, B, C) \cdot \cS = \sum_{i,j,k} s_{ijk} \ba_i \otimes \bb_j \otimes \bc_k$ where $\ba_i$, $\bb_j$ and $\bc_k$ are the $i$-th, $j$-th and $k$-th columns of $A$, $B$ and $C$, respectively. Therefore, the number of rank-one terms in such a decomposition exceeds the rank of $\cT$ if $\cS$ is not~sparse. The goal of this paper is to study sparsity patterns of $\cS$ that connect Tucker and CP decompositions while retaining some of the desirable features of the matrix SVD; see \Cref{fig:bato-summary}.

 \begin{figure}
     \centering
     \resizebox{\textwidth}{!}{
 \begin{tikzpicture}[line cap=round,line join=round,font=\fontsize{7}{7}\selectfont,every path/.style={-}]

\def\cell{0.47}
\def\depX{0.25}
\def\depY{0.18}

\newcommand{\tensorbox}[3]{
  \begin{scope}[shift={(#1,#2)}]
    \pgfmathsetmacro{\W}{3*\cell}
    \pgfmathsetmacro{\H}{4*\cell}
    \pgfmathsetmacro{\DX}{2*\depX}
    \pgfmathsetmacro{\DY}{2*\depY}

    \filldraw[fill=tensorfill,draw=black,line width=.68pt]
      (0,0) rectangle (\W,\H);
    \filldraw[fill=tensorfill,draw=black,line width=.68pt]
      (\W,0)--(\W+\DX,\DY)--(\W+\DX,\H+\DY)--(\W,\H)--cycle;
    \filldraw[fill=tensorfill,draw=black,line width=.68pt]
      (0,\H)--(\DX,\H+\DY)--(\W+\DX,\H+\DY)--(\W,\H)--cycle;

    \foreach \j in {1,2}
      \draw[gridline,line width=.23pt]
        ({\j*\cell},0)--({\j*\cell},\H);
    \foreach \i in {1,2,3}
      \draw[gridline,line width=.23pt]
        (0,{\i*\cell})--(\W,{\i*\cell});

    \foreach \j in {1,2}
      \draw[gridline,line width=.23pt]
        ({\j*\cell},\H)--({\j*\cell+\DX},\H+\DY);
    \draw[gridline,line width=.23pt]
      (\depX,\H+\depY)--(\W+\depX,\H+\depY);

    \draw[gridline,line width=.23pt]
      (\W+\depX,\depY)--(\W+\depX,\H+\depY);
    \foreach \i in {1,2,3}
      \draw[gridline,line width=.23pt]
        (\W,{\i*\cell})--(\W+\DX,{\i*\cell+\DY});

    \node[fill=tensorfill,inner sep=.35pt] at (1.5*\cell,2*\cell) {$#3$};
  \end{scope}
}

\newcommand{\squarematrix}[5]{
  \begin{scope}[shift={(#1,#2)}]
    \pgfmathsetmacro{\S}{#3*\cell}
    \filldraw[fill=#5,draw=black,line width=.65pt] (0,0) rectangle (\S,\S);
    \ifnum#3>1
      \foreach \t in {1,...,\numexpr#3-1\relax}{
        \draw[gridline,line width=.23pt] ({\t*\cell},0)--({\t*\cell},\S);
        \draw[gridline,line width=.23pt] (0,{\t*\cell})--(\S,{\t*\cell});
      }
    \fi
    \node[fill=#5,inner sep=.35pt] at (.5*\S,.5*\S) {$#4$};
  \end{scope}
}

\newcommand{\cmatrix}[3]{
  \begin{scope}[shift={(#1,#2)}]
    \pgfmathsetmacro{\W}{2*\cell}
    \pgfmathsetmacro{\DX}{2*\depX}
    \pgfmathsetmacro{\DY}{2*\depY}
    \filldraw[fill=tensorfill,draw=black,line width=.65pt]
      (0,0)--(\W,0)--(\W+\DX,\DY)--(\DX,\DY)--cycle;
    \draw[gridline,line width=.23pt]
      (\cell,0)--(\cell+\DX,\DY);
    \draw[gridline,line width=.23pt]
      (\depX,\depY)--(\W+\depX,\depY);
    \node[fill=tensorfill,inner sep=.35pt]
      at (\cell+\depX,\depY) {$#3$};
  \end{scope}
}

\newcommand{\uvector}[3]{
  \begin{scope}[shift={(#1,#2)}]
    \pgfmathsetmacro{\H}{4*\cell}
    \filldraw[fill=tensorfill,draw=black,line width=.62pt]
      (0,0) rectangle (\cell,\H);
    \foreach \i in {1,2,3}
      \draw[gridline,line width=.23pt]
        (0,{\i*\cell})--(\cell,{\i*\cell});
    \node[fill=tensorfill,inner sep=.35pt] at (.5*\cell,2*\cell) {$#3$};
  \end{scope}
}

\newcommand{\vvector}[3]{
  \begin{scope}[shift={(#1,#2)}]
    \pgfmathsetmacro{\W}{3*\cell}
    \filldraw[fill=tensorfill,draw=black,line width=.62pt]
      (0,0) rectangle (\W,\cell);
    \foreach \j in {1,2}
      \draw[gridline,line width=.23pt]
        ({\j*\cell},0)--({\j*\cell},\cell);
    \node[fill=tensorfill,inner sep=.35pt] at (.5*\W,.5*\cell) {$#3$};
  \end{scope}
}

\newcommand{\wvector}[3]{
  \begin{scope}[shift={(#1,#2)}]
    \pgfmathsetmacro{\DX}{2*\depX}
    \pgfmathsetmacro{\DY}{2*\depY}
    \filldraw[fill=tensorfill,draw=black,line width=.62pt]
      (0,0)--(\cell,0)--(\cell+\DX,\DY)--(\DX,\DY)--cycle;
    \draw[gridline,line width=.23pt]
      (\depX,\depY)--(\cell+\depX,\depY);
    \node[fill=tensorfill,inner sep=.35pt]
      at (.5*\cell+\depX,\depY) {$#3$};
    \draw[black,line width=.62pt]
      (0,0)--(\cell,0)--(\cell+\DX,\DY)--(\DX,\DY)--cycle;
  \end{scope}
}

\newcommand{\rankone}[3]{
  \begin{scope}[shift={(#1,#2)}]
    \uvector{0}{0}{\mathbf u_{#3}}
    \vvector{0.62}{3*\cell}{\mathbf v_{#3}}
    \wvector{0.02}{4*\cell+0.12}{\mathbf w_{\!#3}}
  \end{scope}
}

\newcommand{\voxel}[3]{
  \pgfmathsetmacro{\vx}{(#2-1)*\cell+(#3-1)*\depX}
  \pgfmathsetmacro{\vy}{(4-#1)*\cell+(#3-1)*\depY}
  \filldraw[fill=activecell,draw=black,line width=.42pt]
    (\vx,\vy) rectangle ++(\cell,\cell);
  \filldraw[fill=activecell,draw=black,line width=.42pt]
    (\vx+\cell,\vy)--(\vx+\cell+\depX,\vy+\depY)--
    (\vx+\cell+\depX,\vy+\cell+\depY)--(\vx+\cell,\vy+\cell)--cycle;
  \filldraw[fill=activecell,draw=black,line width=.42pt]
    (\vx,\vy+\cell)--(\vx+\depX,\vy+\cell+\depY)--
    (\vx+\cell+\depX,\vy+\cell+\depY)--(\vx+\cell,\vy+\cell)--cycle;
}

\newcommand{\wirelattice}{
  \pgfmathsetmacro{\W}{3*\cell}
  \pgfmathsetmacro{\H}{4*\cell}
  \pgfmathsetmacro{\DX}{2*\depX}
  \pgfmathsetmacro{\DY}{2*\depY}

  \foreach \k in {0,1,2}{
    \pgfmathsetmacro{\sx}{\k*\depX}
    \pgfmathsetmacro{\sy}{\k*\depY}
    \draw[gridline,line width=.28pt]
      (\sx,\sy) rectangle ++(\W,\H);
    \foreach \j in {1,2}
      \draw[gridline,line width=.22pt]
        (\sx+\j*\cell,\sy)--(\sx+\j*\cell,\sy+\H);
    \foreach \i in {1,2,3}
      \draw[gridline,line width=.22pt]
        (\sx,\sy+\i*\cell)--(\sx+\W,\sy+\i*\cell);
  }
  \foreach \j in {0,1,2,3}{
    \foreach \i in {0,1,2,3,4}{
      \draw[gridline,line width=.22pt]
        ({\j*\cell},{\i*\cell})--
        ({\j*\cell+\DX},{\i*\cell+\DY});
    }
  }
  \draw[gridline,line width=.28pt]
    (0,0)--(\W,0)--(\W+\DX,\DY)--(\W+\DX,\H+\DY)--
    (\DX,\H+\DY)--(0,\H)--cycle;
  \draw[gridline,line width=.28pt]
    (0,0)--(\DX,\DY)
    (\W,0)--(\W+\DX,\DY)
    (0,\H)--(\DX,\H+\DY);
}
\newcommand{\wireoutline}{
  \pgfmathsetmacro{\W}{3*\cell}
  \pgfmathsetmacro{\H}{4*\cell}
  \pgfmathsetmacro{\DX}{2*\depX}
  \pgfmathsetmacro{\DY}{2*\depY}

  \draw[gridline,line width=.22pt] (\cell,0)--(\cell,1.95*\cell);
  \draw[gridline,line width=.22pt] (2*\cell,3.025*\cell)--(2*\cell,\H);
  \draw[gridline,line width=.22pt] (2*\cell,0)--(2*\cell,0.98*\cell);
  \draw[gridline,line width=.22pt] (0,\cell)--(1.98*\cell,\cell);
  \draw[gridline,line width=.22pt] (\W+\depX,3.025*\cell+\depY)--(\W+\depX,\H+\depY);
  \draw[gridline,line width=.22pt] (\cell,\cell)--(\cell+0.95*\depX,\cell+0.95*\depY);
  \draw[gridline,line width=.22pt] (2.03*\cell,3*\cell)--(\W,3*\cell);
  \draw[gridline,line width=.22pt] (\W,3*\cell)--(\W + 0.95*\depX,3*\cell + 0.95*\depY);

  \draw[black,line width=.68pt] (0,0)--(\W,0);
  \draw[black,line width=.68pt] (0,\H)--(\W,\H);
  \draw[black,line width=.68pt] (\DX,\H+\DY)--(\DX+\W,\H+\DY);
  
  \draw[black,line width=.68pt] (0,0)--(0,\H);
  \draw[black,line width=.68pt] (\W,0)--(\W,\H);
  \draw[black,line width=.68pt] (\W+\DX,\DY)--(\W+\DX,\H+\DY);
  
  \draw[black,line width=.68pt] (0,\H)--(\DX,\H+\DY);
  \draw[black,line width=.68pt] (\W,0)--(\W+\DX,\DY);
  \draw[black,line width=.68pt] (\W,\H)--(\W+\DX,\H+\DY);
}

\squarematrix{0.00}{2.80}{4}{A}{tensorfill}
\tensorbox{2.15}{2.80}{\mathcal S}
\squarematrix{4.30}{3.27}{3}{B}{tensorfill}
\cmatrix{2.71}{5.15}{C}
\node (tuckerlabel) at (2.95,2.12) {\normalsize Tucker decomposition};
\node at (2.95,1.72) {\footnotesize core tensor with orthogonal slices};

\node at (6.17,3.74) {$=$};
\tensorbox{6.58}{2.80}{\mathcal T}

\node at (8.82,3.74) {$=$};
\rankone{9.27}{2.80}{1}
\node at (12.07,3.74) {$+\cdots+$};
\rankone{12.91}{2.80}{r}
\node (cplabel) at (12.16,2.12) {\normalsize CP decomposition};
\node at (12.16,1.72) {\footnotesize short sum of rank-one tensors};

\draw[<->,>=latex,line width=.55pt]
  ($(tuckerlabel.east)+(0.78,-0.2)$)--($(cplabel.west)+(-0.78,-0.2)$);
\node at (7.8,1.52) {\footnotesize relation via sparsity};

\tensorbox{4.80}{-1.22}{\mathcal S}
\node at (7.30,-.28) {$=$};

\begin{scope}[shift={(7.90,-1.22)}]
  \wirelattice
  \voxel{4}{1}{2}
  \voxel{3}{2}{2}
  \voxel{2}{3}{2}
  \voxel{3}{3}{1}
  \voxel{2}{2}{1}
  \voxel{1}{1}{1}
  \wireoutline
\end{scope}
\end{tikzpicture}
}
     \caption{Factors in the Tucker decomposition $\cT = (A, B, C) \cdot \cS$ and the CP decomposition $\cT \! \!= \! \sum_{i=1}^r \bu_i \otimes \bv_i \otimes \bw_i$ are related when $\cS$ has certain sparsity.}
     \label{fig:bato-summary}
 \end{figure}

The reader may be wondering why we require that the core tensor $\cS$ in a Tucker decomposition has orthogonal slices instead of requiring that $\cS$ is diagonal. Diagonal tensors have orthogonal slices, but not every tensor admits a Tucker decomposition with a diagonal core tensor due to orthogonality imposed on factors.
Tensors that admit a Tucker decomposition with a diagonal core tensor are called \emph{orthogonally decomposable} (\emph{odeco}) tensors \cite{boralevi2017orthogonal, kolda2001orthogonal, robeva2017singular}. Equivalently, a tensor is odeco if it can be decomposed as $\T = \sum_{i=1}^r \bu_i \otimes \bv_i \otimes \bw_i$ such that for all $i \neq j$ the vectors $\bu_i$, $\bv_i$ and~$\bw_i$ are orthogonal to $\bu_j$, $\bv_j$ and $\bw_j$, respectively. These decompositions satisfy all the good properties of the SVD mentioned above: they are unique, the number of summands is the rank of $\T$, and a best low-rank approximation of $\T$ is given by truncating the decomposition. However, the set of odeco tensors is very low-dimensional compared to the ambient space of all tensors. We study sparsity patterns that are less restrictive than diagonality while retaining some of the aforementioned properties, thus considering higher-dimensional sets of tensors that can be useful in applications. The key concept here is basis-aligned two-orthogonality.

\begin{definition}[Bato]
    A decomposition $\T = \sum_{i=1}^r \bu_i^{(1)} \otimes \cdots \otimes \bu_i^{(d)}$ is \emph{two-orthogonal} if each pair of rank-one summands is orthogonal in at least two factors. If we further require having one orthogonal basis for each factor (that is, for each $k$, the $\bu_i^{(k)}$ and $\bu_j^{(k)}$ are either collinear or orthogonal), we call such a decomposition \emph{basis-aligned two-orthogonal} (\emph{bato}).
\end{definition}

A \emph{bato tensor} is one that admits a bato decomposition. Bato tensors were introduced in~\cite{ribot2026decomposing} while studying the broader class of two-orthogonal tensors to determine which tensors can be decomposed by iteratively subtracting critical rank-one approximations, i.e., critical points of the best rank-one approximation problem. Two-orthogonality also appeared in \cite{vannieuwenhoven2014generic} while studying optimally truncatable decompositions, as in the Eckart-Young theorem \cite{eckart1936approximation}. Here we study the class of bato tensors.

It is worth stressing that the CP decompositions we consider need not be minimal: the number of rank-one summands in a bato decomposition may exceed the tensor rank. We define the \emph{bato rank} of a bato tensor $\T$ as the minimum number of rank-one terms in a bato decomposition of $\T$. 

\begin{example}
    Determinants and matrix multiplication tensors are bato:
    \[
    \det\nolimits_n = \sum_{\sigma \in \fS_n} \operatorname{sgn}(\sigma) \,\be_{\sigma(1)} \otimes \cdots \otimes \be_{\sigma(n)} , \quad \matmul_{n} = \sum_{i,j,k=1}^n E_{ik}^* \otimes E_{kj}^* \otimes E_{ij} \, .
    \]
Their bato rank is $n!$ and $n^3$, respectively. In particular, the standard matrix multiplication algorithm is optimal among bato algorithms.
See \Cref{sec:world} for more details, as well as a discussion of the meaning of bato rank in complexity theory.
\end{example}

The singular value decomposition of a matrix is a bato decomposition, so every matrix is bato. The following results show how bato decompositions unify Tucker and CP for higher-order tensors, and their good properties resemble the SVD for matrices.

\begin{theorem}\label{thm:bato-tucker}
    A bato tensor $\T \in \R^{n_1} \otimes \cdots \otimes \R^{n_d}$ admits a Tucker decomposition
    $$
    \T = (U^{(1)}, \dots, U^{(d)}) \cdot \cS = \sum_{j_1, \dots, j_d}s_{j_1, \dots, j_d } \bu_{j_1}^{(1)} \otimes \cdots \otimes \bu_{j_d}^{(d)}
    $$
    such that every nonzero summand $s_{j_1, \dots, j_d} \bu_{j_1}^{(1)} \otimes \cdots \otimes \bu_{j_d}^{(d)}$ is a critical rank-one approximation of $\T$. Conversely, a tensor with such a decomposition is bato.
\end{theorem}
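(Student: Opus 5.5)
Both directions come down to the same computation. A rank-one tensor $\lambda\, \bu^{(1)} \otimes \cdots \otimes \bu^{(d)}$ with unit factors is a critical rank-one approximation of $\T$ exactly when the singular-vector equations hold for every $k$:
\[
\T \cdot (\bu^{(1)}, \dots, \widehat{\bu^{(k)}}, \dots, \bu^{(d)}) = \lambda\, \bu^{(k)}, \qquad \lambda = \langle \T, \bu^{(1)} \otimes \cdots \otimes \bu^{(d)} \rangle .
\]
Here the hat means that the $k$-th slot is left open, i.e.\ we contract $\T$ against all factors except the $k$-th. The plan is to check these equations directly, term by term, using the orthogonality built into a bato decomposition.

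For the forward direction, start with a bato decomposition $\T = \sum_{i=1}^r \lambda_i \bu_i^{(1)} \otimes \cdots \otimes \bu_i^{(d)}$ with unit factors. First I merge summands that are collinear in every factor, so that distinct summands are distinct rank-one tensors; since merging keeps all pairwise relations (collinear or orthogonal in each factor), the result is still bato. By basis-alignment, for each $k$ the distinct lines spanned by the vectors $\bu_i^{(k)}$ are pairwise orthogonal. I extend them to an orthonormal basis $\bu_1^{(k)}, \dots, \bu_{n_k}^{(k)}$ of $\R^{n_k}$ and collect it in the orthogonal matrix $U^{(k)}$. Absorbing signs into the coefficients, each summand becomes $s_{j_1,\dots,j_d}\, \bu_{j_1}^{(1)} \otimes \cdots \otimes \bu_{j_d}^{(d)}$ for a unique multi-index. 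This gives a core $\cS$ whose support is the set of multi-indices $J_i$ of the summands. That is the required Tucker decomposition.

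The key step is to show that each nonzero summand is critical. Fix a support index $J$ and a mode $k$, and contract $\T$ against $\bu_{j_l}^{(l)}$ for all $l \neq k$. Any other support index $J' \neq J$ differs from $J$ in at least two coordinates, because the two summands are orthogonal in at least two factors. Hence $J'$ differs from $J$ in at least one coordinate $l \neq k$, and its contraction vanishes. Only the $J$ term survives, giving $s_J \bu^{(k)}_{j_k}$, so the singular-vector equations hold. The point I expect to need care is exactly this combinatorial reformulation: two-orthogonality together with basis-alignment is equivalent to the support of $\cS$ having pairwise Hamming distance at least $2$. Merging collinear duplicates beforehand is what rules out distance $0$.

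For the converse, suppose $\T = \sum_J s_J\, \bu_{J}$ is a Tucker decomposition with orthonormal $U^{(k)}$ and every nonzero summand critical. I will show the support has pairwise Hamming distance at least $2$, which makes the decomposition bato: the factors come from orthonormal bases, so they are basis-aligned, and distance at least $2$ means any two summands are orthogonal in two factors. Suppose instead that $J$ and $J'$ in the support differ only in coordinate $k$. Contracting $\T$ against $\bu_{j_l}^{(l)}$ for $l \neq k$ gives $\sum_{J''} s_{J''} \bu^{(k)}_{j''_k}$, where the sum runs over support indices $J''$ agreeing with $J$ off coordinate $k$. This sum includes the term $s_{J'} \bu^{(k)}_{j'_k} \neq 0$, which is orthogonal to $\bu^{(k)}_{j_k}$. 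So the contraction is not a multiple of $\bu^{(k)}_{j_k}$, which contradicts the criticality of the $J$-summand. This completes both directions.
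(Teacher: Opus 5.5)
Your criticality argument and your converse follow the paper's proof. Contracting $\T$ against every factor except the $k$-th, and using linear independence of the orthonormal basis $\{\bu^{(k)}_j\}$, is exactly how the paper shows two things: support indices are at Hamming distance at least $2$, and the support terms are singular vector tuples. Merging summands that are collinear in every factor is harmless but unnecessary: such a pair is orthogonal in no factor, so two-orthogonality already excludes it.

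The gap is in the forward direction, at the sentence ``That is the required Tucker decomposition.'' In this paper a Tucker decomposition is not an arbitrary expression $(U^{(1)},\dots,U^{(d)})\cdot\cS$ with orthogonal $U^{(k)}$. By the definition in the background section, each $U^{(k)}$ must consist of left singular vectors of the flattening $T^{(k)}$; equivalently, the core must have pairwise orthogonal slices in every mode.

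Without this requirement the existence claim would carry little content, since $\T=(U^{(1)},\dots,U^{(d)})\cdot\bigl(({U^{(1)}}^\top,\dots,{U^{(d)}}^\top)\cdot\T\bigr)$ holds for any orthogonal matrices. The point stressed right after the theorem is precisely that the factors of the critical rank-one terms are left singular vectors of the flattenings. You never verify this for your completed bases. The paper handles it by writing $T^{(k)}=\sum_{j_k}\sigma_{j_k}\,\bu^{(k)}_{j_k}\otimes\mathrm{vect}(\cdots)$ and observing that this is a singular value decomposition.

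The repair uses the Hamming-distance fact you already proved. For $a\neq b$, $\langle\cS_{i_k=a},\cS_{i_k=b}\rangle$ is a sum of products $s_{\bi}s_{\bj}$ over pairs of multi-indices with $i_k=a$ and $j_k=b$ that agree in all other coordinates. Any nonzero term would give two support indices at Hamming distance $1$, so the slices are orthogonal. Since $\bigotimes_{\ell\neq k}U^{(\ell)}$ is orthogonal, this gives $T^{(k)}{T^{(k)}}^\top=U^{(k)}\diag\bigl(\|\cS_{i_k=1}\|^2,\dots,\|\cS_{i_k=n_k}\|^2\bigr){U^{(k)}}^\top$. Hence every column of $U^{(k)}$ is a left singular vector of $T^{(k)}$, including the columns you added when completing the basis, which get singular value $0$. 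With this step added, your proof is complete.
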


\Cref{thm:bato-tucker} says that the rank-one summands in a bato decomposition of $\cT$ are critical rank-one approximations and, moreover, the factors of such rank-one summands are left singular vectors of the mode flattenings of $\cT$. This allows us to analyze bato decompositions via the singular value decompositions of flattenings, which leads us to the following result.

\begin{theorem}\label{thm:uniqueness-bato}
    Bato decompositions are generically unique (up to reordering of summands).
\end{theorem}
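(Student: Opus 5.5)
The plan is to use \Cref{thm:bato-tucker} to reduce uniqueness of the bato decomposition to uniqueness of singular vectors of the mode flattenings. Fix a bato tensor $\T$ and a bato decomposition. Group the summands so that, for each mode $k$, the distinct lines among the factors $\bu_i^{(k)}$ form part of an orthonormal basis $\bu^{(k)}_1,\dots,\bu^{(k)}_{n_k}$; we complete it to a full basis if needed. Then $\T = (U^{(1)},\dots,U^{(d)})\cdot \cS$, where the support of $\cS$ is exactly the set of index tuples used by the summands. Two-orthogonality means that any two support tuples agree in at most $d-2$ coordinates. Consequently, the slices of $\cS$ along each mode $k$ have disjoint supports in the remaining coordinates, so they are mutually orthogonal. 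Hence the mode-$k$ flattening satisfies
\[
\T_{(k)}\T_{(k)}^\top = U^{(k)} \diag\bigl(\sigma^{(k)}_1{}^2,\dots,\sigma^{(k)}_{n_k}{}^2\bigr) U^{(k)\top},
\qquad
\sigma^{(k)}_j{}^2=\sum_{\text{tuples with } j_k=j} s_{j_1\cdots j_d}^2 .
\]
So the $\bu^{(k)}_j$ are left singular vectors of $\T_{(k)}$, which is the content of \Cref{thm:bato-tucker}.

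The key steps are as follows. First, parametrize the bato tensors supported on a fixed sparsity pattern $P$ (a set of tuples pairwise agreeing in at most $d-2$ coordinates) by the map $(U^{(1)},\dots,U^{(d)},(s_p)_{p\in P})\mapsto \T$ from $\rO(n_1)\times\cdots\times\rO(n_d)\times(\R\setminus\{0\})^P$. The set of bato tensors is the finite union of these images over all patterns $P$. Second, call a parameter point generic if, for each $k$, the numbers $\sigma^{(k)}_j{}^2$ over indices $j$ actually used in mode $k$ are pairwise distinct. This is a Zariski-open condition on the $s_p$, and it is nonempty: one chooses $s_p$ algebraically independent, noting that two distinct $j,j'$ give sums over disjoint nonempty sets of variables, so they differ. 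Third, take any other bato decomposition of the same $\T$. By the computation above applied to it, each of its mode-$k$ factors is a left singular vector of $\T_{(k)}$ with nonzero singular value. Since the nonzero singular values are simple, that factor equals $\pm\bu^{(k)}_j$ for some $j$. Therefore the second decomposition is also expressed in the bases $U^{(k)}$ up to signs, and its core has support in $P'$. Because the rank-one tensors $\bu^{(1)}_{j_1}\otimes\cdots\otimes\bu^{(d)}_{j_d}$ form a basis of the ambient space, the coefficients are determined by $\T$. Hence $P'=P$, and the summands coincide up to reordering, with signs absorbed into the scalars.

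The main obstacle is the meaning of ``generic'' when the bato variety is reducible, which happens because different patterns $P$ give different components. I would state the result componentwise: for each maximal pattern $P$, the image of a nonempty Zariski-open subset of the parameter space consists of tensors with unique bato decomposition. There is a subtlety here. A generic tensor of one component could in principle also lie in the image of a different pattern $P''$. The argument in the third step already rules this out, since it applies to any bato decomposition whatsoever and shows that the support is forced to be $P$. The remaining care is in checking that the distinctness condition is nonempty for every pattern. This holds because distinct used indices $j\neq j'$ in mode $k$ correspond to disjoint nonempty sets of tuples in $P$.
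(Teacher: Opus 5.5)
Your proposal is correct and takes essentially the same route as the paper. You reduce via \Cref{thm:bato-tucker} to the mode-$k$ Gram matrices, take distinctness of the sums $\sum_{\bi \in L \mid i_k=a} s_{\bi}^2$ as the genericity condition, and recover the coefficients from the basis $\{\bu^{(1)}_{j_1}\otimes\cdots\otimes\bu^{(d)}_{j_d}\}$ (equivalently, $s_{\bi}=\T(\bu^{(1)}_{i_1},\dots,\bu^{(d)}_{i_d})$). Your remark that every factor of a competing bato decomposition has a nonzero singular value handles the zero-singular-value ambiguity more carefully than the paper, which dismisses it in one sentence.
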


Canonical polyadic decompositions of general tensors do not have all the properties that one would hope for. The set of tensors of rank at most $r$ is not always Euclidean closed. 
This implies that there may exist tensors of rank greater than $r$ that can be arbitrarily well approximated by rank-$r$ tensors.
By contrast, we show that the sets of tensors of bounded bato rank are Euclidean closed; see \Cref{cor:bato-closed}.

In addition, the best low-rank approximations of a generic tensor (when they exist) cannot be obtained via truncations of a fixed CP decomposition~\cite{vannieuwenhoven2014generic}.
Put differently, the rank-one terms in a best rank-$r$ and rank-$\tilde r$ approximation of a tensor may be different. This implies that, unlike for matrices, the rank-one terms appearing in a low-rank approximation of a tensor may not be informative on their own; they must be considered altogether. Truncations of bato decompositions have better properties than those of arbitrary CP decompositions.

\begin{theorem} \label{thm:truncation-bato-critical}
    Let $\cT \!= \sum_{i=1}^r \bu_{i}^{(1)} \otimes \cdots \otimes \bu_{i}^{(d)}$ be a generic bato decomposition. Then, for all $\tilde r < r$, the truncation $\tilde \cT \!= \sum_{i=1}^{\tilde r} \bu_{i}^{(1)} \otimes \cdots \otimes \bu_{i}^{(d)}$ is a critical bato-rank-$\tilde r$ approximation of~$\cT$.
\end{theorem}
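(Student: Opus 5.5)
We need to pin down what a "critical bato-rank-$\tilde r$ approximation" means before proving anything. The natural reading is this: $\tilde\cT$ is a critical point of the distance function $\|\cT - \mathcal X\|^2$ restricted to the set $B_{\tilde r}$ of tensors of bato rank at most $\tilde r$. Since $B_{\tilde r}$ is not smooth, we work locally on the smooth stratum near $\tilde\cT$. The goal is then to show that $\cT - \tilde\cT$ is orthogonal to the tangent space $T_{\tilde\cT}B_{\tilde r}$, or to every tangent direction of smooth curves through $\tilde\cT$ inside $B_{\tilde r}$.

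\textbf{Step 1: local parametrization.} Near a generic point, $B_{\tilde r}$ is locally the image of the map sending data $(Q^{(1)},\dots,Q^{(d)},\lambda_1,\dots,\lambda_{\tilde r})$ to $(Q^{(1)},\dots,Q^{(d)})\cdot \cS_\lambda$. Here the $Q^{(k)}\in \rO(n_k)$ are orthogonal changes of basis and $\cS_\lambda$ is a core supported on a fixed sparsity pattern $P$ of $\tilde r$ entries. This is the support pattern of the truncated core in the Tucker form from \Cref{thm:bato-tucker}: the pattern is a partial Latin-type configuration in which any two entries differ in at least two indices.

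Genericity is used here. The truncated decomposition is again a generic bato decomposition of its own support pattern, so by \Cref{thm:uniqueness-bato} the pattern and basis are locally determined. Nearby bato tensors of bato rank $\le\tilde r$ then arise from perturbing $Q$ and $\lambda$ within the same pattern (possibly one of finitely many patterns, each handled identically). Differentiating, the tangent space at $\tilde\cT$ is spanned by two kinds of directions:
\begin{itemize}
\item the rank-one terms $\bu_{i}^{(1)}\otimes\cdots\otimes\bu_i^{(d)}$ for $i\le\tilde r$, coming from varying $\lambda$;
\item for each mode $k$ and each skew matrix $X$, the terms $\sum_{i\le\tilde r}\bu_i^{(1)}\otimes\cdots\otimes X\bu_i^{(k)}\otimes\cdots\otimes\bu_i^{(d)}$, coming from varying $Q^{(k)}$.
\end{itemize}

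\textbf{Step 2: orthogonality computation.} We compute inner products with the residual $\mathcal R=\cT-\tilde\cT=\sum_{j>\tilde r}\bu_j^{(1)}\otimes\cdots\otimes\bu_j^{(d)}$.
\begin{itemize}
\item \emph{$\lambda$-directions.} The inner product of the $i$-th term with the $j$-th term is $\prod_k\langle \bu_i^{(k)},\bu_j^{(k)}\rangle$. This vanishes because the summands are orthogonal in at least two factors.
\item \emph{Rotational directions.} The inner product of the $i$-th rotational term with the $j$-th residual term equals
\[
\langle X\bu_i^{(k)},\bu_j^{(k)}\rangle\prod_{l\neq k}\langle \bu_i^{(l)},\bu_j^{(l)}\rangle .
\]
Two-orthogonality means there is at least one index $l\neq k$ in which $\bu_i$ and $\bu_j$ are orthogonal, so this product is zero.
\end{itemize}
This is the same mechanism by which \Cref{thm:bato-tucker} shows summands are critical rank-one approximations; one could also cite that theorem's computation directly. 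Hence $\mathcal R\perp T_{\tilde\cT}B_{\tilde r}$, which gives criticality.

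\textbf{Main obstacle.} The linear algebra in Step 2 is easy. The hard part is Step 1: justifying that, at a generic $\tilde\cT$, the set $B_{\tilde r}$ is locally a manifold given exactly by the orbit-times-pattern parametrization, so that its tangent space is what we computed. Two issues must be ruled out:
\begin{itemize}
\item nearby bato tensors of bato rank $\le\tilde r$ using a different basis alignment that is not a small rotation;
\item degenerate situations, such as repeated singular values, where extra tangent directions might appear.
\end{itemize}
To handle these, I would first use \Cref{thm:bato-tucker} to identify the factors as singular vectors of flattenings. Genericity gives distinct singular values, so the singular vectors vary smoothly, and nearby bato decompositions are forced to be small perturbations. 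I would then invoke \Cref{thm:uniqueness-bato} to fix the pattern locally. This also identifies the irreducible component through $\tilde\cT$, and makes the critical-point notion well-defined as criticality on the smooth locus of that component.
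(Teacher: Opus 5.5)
Your proposal is correct and follows essentially the same route as the paper. The paper computes $T_{\tilde\cT}X_{\tilde L}$ as the span of the retained rank-one terms plus $\sum_k \mathfrak{so}(n_k)\cdot\tilde\cT$ (its tangent-space lemma) and checks that $\cT-\tilde\cT$ is orthogonal to it via exactly your two-orthogonality computation. It then handles your ``main obstacle'' in one line, using genericity and \Cref{thm:uniqueness-bato} to conclude that $\tilde\cT$ has a unique bato decomposition and is a smooth point of the set of tensors of bato rank at most $\tilde r$.
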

A critical bato-rank-$\tilde r$ approximation of $\cT$ is a critical point of the best bato-rank-$\tilde r$ optimization problem. Note that, unlike for matrices when we keep the largest $\tilde r$ singular values, truncations of bato decompositions are not always optimal; see \Cref{sec:truncations} for more details.

Generic tensors of order greater than two are not bato. We study the algebraic variety defined by the set of bato tensors, i.e., its Zariski closure, which we call the \emph{bato variety}.

\begin{theorem} \label{thm:dimension-bato}
    The bato variety in $ \R^{n_1} \otimes \cdots \otimes \R^{n_d}$ with $n_1 \leq \cdots \leq n_d$ has dimension
    \[
    \prod_{k = 1}^{d-1} n_k + \sum_{k = 1}^{d-1} \binom{n_k}{2} + n_dm - \binom{m+1}{2},
    \]
    where $m = \min\left\{n_d, \prod\nolimits_{k=1}^{d-1}n_k \right\}$.
\end{theorem}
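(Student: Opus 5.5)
The plan is to realize the bato variety as a finite union of images of parametrization maps, one for each admissible sparsity pattern of the core. I would then bound each image's dimension from above and show that a suitable maximal pattern attains the bound. By \Cref{thm:bato-tucker}, every bato tensor can be written as $\cT = (U^{(1)},\dots,U^{(d)})\cdot\cS$ with orthogonal $U^{(k)}$, and the support $P\subseteq [n_1]\times\cdots\times[n_d]$ of $\cS$ has the property that any two of its positions differ in at least two coordinates. This is exactly two-orthogonality with aligned bases, i.e.\ $P$ is a partial Latin hyperrectangle.

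\textbf{Counting for a fixed pattern.} Let $\ell$ be the number of distinct last coordinates occurring in $P$. For fixed $P$, the tensor $\cT$ depends only on three pieces of data: the $|P|$ nonzero core entries, the matrices $U^{(1)},\dots,U^{(d-1)}\in \rO(n_k)$, and the $\ell$ columns of $U^{(d)}$ that are actually used. These columns form a point of the Stiefel manifold of orthonormal $\ell$-frames in $\R^{n_d}$. Hence the image $V_P$ of the parametrization satisfies
\[
\dim V_P \le |P| + \sum_{k=1}^{d-1}\binom{n_k}{2} + n_d\ell - \binom{\ell+1}{2}.
\]
Two-orthogonality forces $|P|\le \prod_{k=1}^{d-1} n_k =: N$: two positions of $P$ cannot share their first $d-1$ coordinates, since they would then differ only in the last coordinate. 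Moreover $\ell\le \min\{n_d,|P|\}\le m$. The function $\ell\mapsto n_d\ell-\binom{\ell+1}{2}$ is nondecreasing for $\ell\le n_d$, so every $V_P$ has dimension at most the claimed value. There are finitely many patterns, and the bato variety is the union of the Zariski closures of the $V_P$, so this gives the upper bound.

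\textbf{Lower bound.} Here I need a pattern $P$ with $|P|=N$ that uses exactly $m$ distinct last indices. Equivalently, I need a map $f\colon[n_1]\times\cdots\times[n_{d-1}]\to[n_d]$ that is injective along every coordinate line and whose image has size $m$.
\begin{itemize}[leftmargin=*]
\item If $n_d\ge N$, take $f$ injective.
\item Otherwise, try the mixed-radix encoding of $(j_1,\dots,j_{d-1})$ reduced mod $n_d$. This map is injective on coordinate lines because each line varies a single digit over a range of length at most $n_d$. It is surjective because the encoding already hits all of $0,\dots,N-1$ and $N\ge n_d$.
\end{itemize}
Verifying line-injectivity of this construction for $d\ge 3$ is the combinatorial step I would check carefully; it is one of the two obstacles.

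For the maximal pattern, I then show that the parametrization has finite generic fibers. The mode-$k$ flattening of $\cT$ has left singular vectors given by the columns of $U^{(k)}$, with singular values equal to the norms of the corresponding slices of $\cS$. For generic core entries these norms are distinct and nonzero for every index that is used. Therefore each $U^{(k)}$ for $k<d$, and the $m$ used columns of $U^{(d)}$, are determined up to sign, and then $\cS=(U^{(1)\top},\dots,U^{(d)\top})\cdot\cT$ is determined. This argument is essentially the one behind \Cref{thm:uniqueness-bato}. Finiteness of the generic fiber makes the dimension of the image equal to the parameter count, which matches the formula.

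\textbf{Main obstacle.} The delicate step is ensuring that the singular values are generically distinct, which requires that distinct slices do not have norms that are forced to coincide. Since the core entries are free parameters and slices are disjoint subsets of $P$, generic entries should give distinct norms. I also need that every index $j_k\in[n_k]$ with $k<d$ is actually used by $P$, so that the whole orthogonal group $\rO(n_k)$ is recovered and not just a frame. This holds because $f$ is defined on the full product $[n_1]\times\cdots\times[n_{d-1}]$.
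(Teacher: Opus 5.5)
Your upper bound and your finite-fiber argument are sound, and they are essentially the paper's: the Stiefel parameter count of \Cref{lem:dimension-fixed-support} combined with generic uniqueness as in \Cref{thm:uniqueness-bato}. The distinct-singular-value issue you call the main obstacle is routine, as you observe. The genuine gap is the combinatorial step you flagged: for $n_d<N$, the mixed-radix encoding reduced mod $n_d$ is \emph{not} injective on coordinate lines.

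A step in direction $k$ changes the digit $j_k$ by some $\Delta$ with $0<|\Delta|\le n_k-1$. But it changes the encoded integer by $\Delta\, n_1\cdots n_{k-1}$ (the place value times $\Delta$), and this can be divisible by $n_d$. For $n_1=n_2=n_3=2$, the encoding $(j_1-1)+2(j_2-1)$ reduced mod $2$ is just $j_1-1$. So $f$ is constant in direction $2$, and $P$ contains $(1,1,1)$ and $(1,2,1)$, which differ in one coordinate only. More generally, in the cubical case $n_1=\cdots=n_d=n$ with $d\ge 3$, reduction mod $n$ only sees the lowest digit, so line-injectivity fails in every direction but one. Thus, precisely in the central case, your $P$ is not a partial Latin hyperrectangle, and the lower bound is not established.

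The repair is the one in the paper. Take $f(j_1,\dots,j_{d-1})\equiv j_1+\cdots+j_{d-1}\pmod{n_d}$. A step in direction $k$ changes this sum by $\Delta$ with $0<|\Delta|\le n_k-1\le n_d-1$, so $f$ is line-injective. Your ``range of length at most $n_d$'' reasoning is correct, but it applies to the sum of the coordinates, not to the mixed-radix integer.

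This $f$ need not be surjective, however. For $(n_1,\dots,n_4)=(2,2,2,5)$, the sums take only the four values $3,\dots,6$. So you need an extra reassignment step:
\begin{itemize}
\item While the image of $f$ has fewer than $n_d$ elements, it also has fewer than $N$ elements (since $n_d<N$), so some value is attained at two positions.
\item Moving one of these positions to an unused value of $[n_d]$ keeps $f$ line-injective, because the new value occurs nowhere else, and it enlarges the image by one.
\end{itemize}
After finitely many steps, the image is all of $[n_d]$, $|P|=N$, and every index in modes $1,\dots,d-1$ is used. From there your parameter count and finite-fiber argument go through unchanged.
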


The term $\prod_{k=1}^{d-1} n_k$ above is the maximum bato rank in that tensor space (\Cref{prop:maximal-bato-rank}), while the other terms are the degrees of freedom to pick orthogonal vectors for each factor.

Tucker decompositions of bato tensors give sparse core tensors: distinct support indices differ in at least two coordinates, as in \Cref{fig:bato-summary}. We call such a support a partial Latin hyperrectangle, as they generalize the notion of Latin squares. Despite bato decompositions being generically unique, their Tucker decompositions may use core tensors supported on different partial Latin hyperrectangles. Nevertheless, these Latin hyperrectangles must be isotopic, i.e., one can be obtained from another by permuting the indices in each coordinate.

\begin{theorem} \label{thm:irreducible-components-bato}
    The irreducible components of the bato variety in $ \R^{n_1} \otimes \cdots \otimes \R^{n_d}$ are in bijection with the isotopy classes of maximal partial Latin hyperrectangles in $[n_1] \times \cdots \times [n_d]$.
\end{theorem}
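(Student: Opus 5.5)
We will describe the bato variety as a finite union of images of parametrizations, one for each partial Latin hyperrectangle, and then identify which of these images are maximal. For a partial Latin hyperrectangle $L \subseteq [n_1]\times\cdots\times[n_d]$, i.e. a set of indices in which any two distinct elements differ in at least two coordinates, we consider the map
\[
\phi_L \colon \rO(n_1)\times\cdots\times\rO(n_d)\times \R^{L} \to \R^{n_1}\otimes\cdots\otimes\R^{n_d},\qquad (U^{(1)},\dots,U^{(d)},\cS)\mapsto (U^{(1)},\dots,U^{(d)})\cdot\cS,
\]
where $\cS$ ranges over tensors supported on $L$. Let $V_L$ be the Zariski closure of its image. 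Since the domain is irreducible (we may use $\rSO$ in each factor, absorbing signs into $\cS$), each $V_L$ is irreducible.

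The first step is to show that the bato tensors are exactly $\bigcup_L \operatorname{im}\phi_L$. By \Cref{thm:bato-tucker} and the definition, a bato decomposition gives orthonormal bases for each factor and a core supported on a set whose distinct points differ in at least two coordinates. This is precisely the two-orthogonality condition read off from the basis alignment. Conversely, any such Tucker form is bato. Hence the bato variety equals $\bigcup_L V_L$.

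Next, the union may be restricted to maximal $L$, since $L\subseteq L'$ implies $\operatorname{im}\phi_L\subseteq\operatorname{im}\phi_{L'}$. Isotopic $L, L'$ give equal images, because permuting indices in coordinate $k$ amounts to permuting the columns of $U^{(k)}$. So the bato variety is $\bigcup_{[L]} V_L$, with one term per isotopy class of maximal partial Latin hyperrectangles. The remaining claim is that this union is irredundant, i.e. $V_L\not\subseteq V_{L'}$ when $L, L'$ are maximal and not isotopic.

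Irredundancy is the main obstacle, and I would handle it through generic uniqueness (\Cref{thm:uniqueness-bato}) and its proof via flattenings. Take a generic point $\T=\phi_L(U,\cS)$ of $V_L$. Its mode-$k$ flattenings have singular vectors that are the columns of $U^{(k)}$, and these are generically determined up to sign and order when the singular values are distinct. Two-orthogonality makes the mode-$k$ Gram matrix diagonal in the basis $U^{(k)}$, with entries given by sums of $s^2$ over slices; genericity should make these distinct. The rank-one terms of $\T$ are then determined, so the support set of the Tucker core is determined up to isotopy.

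Suppose $V_L\subseteq V_{L'}$. A dimension count combined with the constructibility of images would then place a generic $\T\in V_L$ in the image of some $\phi_{L''}$ with $L''$ isotopic to $L'$, possibly after passing to limits. Limits must be handled with care here. I would use closedness of the sets of bounded bato rank (\Cref{cor:bato-closed}), or argue that the set of bato tensors whose flattenings have distinct singular values is closed under the relevant limits, since the singular vectors vary continuously there. Uniqueness then forces $L$ to be isotopic to a subset of $L'$ (supports can only shrink in limits). Maximality of $L$ turns this into isotopy, which is a contradiction. This yields the bijection between irreducible components and isotopy classes.
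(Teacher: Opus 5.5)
\textbf{Verdict: there is a genuine gap, at the irredundancy step.}

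Most of your proposal is fine and agrees with the paper: irreducibility via $\rSO$, the description of the bato tensors as $\bigcup_L\operatorname{im}\phi_L$, the restriction to maximal $L$, and invariance under isotopy. The gap is the step you flag as the main obstacle. From $V_L\subseteq V_{L'}$ (real Zariski closures) you want a generic $\T\in\operatorname{im}\phi_L$ to lie in $\operatorname{im}\phi_{L'}$, or to be a Euclidean limit of its points. Nothing you cite gives this.

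Over $\R$ the images are semialgebraic, not constructible, so Chevalley does not apply. The real Zariski closure of a semialgebraic set can also be much larger than its Euclidean closure. For instance, $\{x\geq 0\}$ and $\{x\leq 0\}$ in $\R^2$ have the same Zariski closure but meet only in a line, so no dimension count can force generic overlap. \Cref{cor:bato-closed} only says that $\operatorname{im}\phi_{L'}$ is Euclidean closed. It says nothing about the real points of $V_{L'}$ outside it, which is exactly where $\operatorname{im}\phi_L$ might a priori sit. If you did know $\T\in\operatorname{im}\phi_{L'}$ for generic $\T\in\operatorname{im}\phi_L$, your uniqueness argument would finish: the two bato decompositions coincide, giving injections $\alpha_k$ with $\alpha(L)\subseteq L'$, and maximality then gives isotopy. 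But that membership is precisely what is missing.

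\textbf{How the paper closes the gap.} It complexifies. Real containment of Zariski closures gives $X_L\subseteq\overline{X^\C_{L'}}$. Over $\C$, the Zariski closure of the constructible set $X^\C_{L'}$ equals its Euclidean closure. Hence a tensor $\cT=\sum_{\bi\in L}s_\bi\,\be_{i_1}\otimes\cdots\otimes\be_{i_d}$ with generic $s_\bi$ is a limit of tensors $\cT_m=(U_m^{(1)},\dots,U_m^{(d)})\cdot\tilde\cS_m$, where $U_m^{(k)}\in\rO(n_k,\C)$ and $\tilde\cS_m$ is supported on $L'$.

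At this point your real tools stop working. $\rO(n,\C)$ is not compact, so the $U_m^{(k)}$ need not converge. $X^\C_{L'}$ need not be closed. And \Cref{thm:uniqueness-bato} is a statement about real tensors.

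The missing idea is to track only data that converges regardless of the $U_m^{(k)}$:
\begin{enumerate}[label=(\roman*)]
\item The mode-$k$ Gram matrices $T_m^{(k)}{T_m^{(k)}}^\top=U_m^{(k)}\tilde\Lambda_m^{(k)}{U_m^{(k)}}^\top$ are similar to $\tilde\Lambda_m^{(k)}$. They converge to a diagonal matrix whose nonzero eigenvalues are simple.
\item Matching eigenvalues yields injections $\alpha_k$. Write $\alpha(\bi)=(\alpha_1(i_1),\dots,\alpha_d(i_d))$.
\item The rank-one spectral projections $\bu\bu^\top$ attached to these eigenvalues converge to the coordinate projections $E_{i_k}$.
\item Applying these projections to $\cT_m$ isolates $\tilde s_{\alpha(\bi),m}$ times a rank-one tensor tending to $s_\bi\,\be_{i_1}\otimes\cdots\otimes\be_{i_d}\neq 0$. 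This forces $\alpha(\bi)\in L'$.
\end{enumerate}

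Your remark that singular vectors vary continuously points in this direction. However, it only becomes usable after passing to $\C$, and with the complex symmetric matrices $TT^\top$ (conjugated by $\rO(n,\C)$) in place of an SVD.
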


The combinatorics of Latin squares and their generalizations is very rich and far from being well understood. The number of isotopy classes of (complete) Latin squares and hypercubes is only known for small cases \cite{mckay2008census,mckay2007small}. The possible cardinalities of maximal partial Latin hypercubes have been widely studied \cite{britz2015maximal,donovan2024maximal,horak2017maximal,ostergaard2005new}, but to our knowledge, their isotopy classes have not yet been examined.
We provide the number of isotopy classes of maximal partial Latin hyperrectangles and the cardinalities of each class for small cases in Appendix~\ref{sec:small-latin}.

\begin{remark}[For the coordinate-agnostic reader]
    We regard tensors interchangeably as multilinear maps and multidimensional arrays. 
    Our results apply to tensors in $V_1 \otimes \cdots \otimes V_d$ for any finite-dimensional real inner product spaces $V_k$. Inner products are needed, but there is no need to fix orthonormal bases. Indeed, the properties we study are invariant under the natural action of $\rO(V_1) \times \cdots \times \rO(V_d)$ on $V_1 \otimes \cdots \otimes V_d$, where $\rO(V_k)$ denotes the group of linear isometries $A: V_k \to V_k$. That is, these are properties of Cartesian tensors \cite{lim2021tensors}.
\end{remark}

This article is organized as follows. In \Cref{sec:background} we review the background material on tensors and introduce notation. In \Cref{sec:bato-decompositions} we study bato decompositions and prove Theorems~\ref{thm:bato-tucker}, \ref{thm:uniqueness-bato}, and~\ref{thm:truncation-bato-critical}.
In \Cref{sec:bato-variety} we study the set of bato tensors and the bato variety and prove Theorems~\ref{thm:dimension-bato} and~\ref{thm:irreducible-components-bato}.
In \Cref{sec:world} we exhibit examples of bato tensors, including determinants, matrix multiplication tensors, and other structure tensors of algebras.

\section{Background on tensors} \label{sec:background}

Given a positive integer $n$, let $[n]$ denote the set $\{1, 2, \dots, n\}$ and let $\{\be_i \mid i \in [n] \}$ denote the canonical basis in $\R^n$. We equip $\R^{n}$ with the Euclidean inner product $\langle \cdot, \cdot\rangle$, which lets us identify $\R^{n}$ with its dual $(\R^{n})^*$ via~$\bv \mapsto \langle \bv, \cdot \rangle$.
Let $n_1, \dots, n_d$ be positive integers.
We consider tensors in $(\R^{n_1})^* \otimes \cdots \otimes (\R^{n_d})^* \cong \R^{n_1} \otimes \cdots \otimes \R^{n_d}$, that is, the real vector space of multilinear maps~$\T : \R^{n_1} \times \cdots \times \R^{n_d} \to \R$.
The positive integer $d$ is called the \emph{order} of the tensor. Tensors of order one are vectors and tensors of order two are matrices.

\subsection*{CP decomposition and tensor rank}
A tensor $\T$ has \emph{rank one}
if it can be expressed as $\T = \bu^{(1)} \otimes \cdots \otimes \bu^{(d)}$ with~$\bu^{(k)} \in \R^{n_k} \!\setminus \!\{0\}$ for all $k \in [d]$.
A \emph{canonical polyadic} (CP) decomposition of a tensor $\T$ expresses it as a sum of rank-one terms: 
\[
\T = \sum_{i=1}^r \bu_i^{(1)} \otimes \cdots \otimes \bu_i^{(d)}.
\]
The minimum number of terms~$r$ required in a CP decomposition of a tensor is its \emph{rank}.

\subsection*{Inner product and orthonormal basis}
The inner products on each $\R^{n_k}$ induce an inner product $\langle \cdot, \cdot \rangle$ on~$\R^{n_1} \otimes \cdots \otimes \R^{n_d}$ defined on two rank-one tensors as
\[
\langle \bu^{(1)} \otimes \cdots \otimes \bu^{(d)}, \bv^{(1)} \otimes \cdots \otimes \bv^{(d)} \rangle  = \prod_{k=1}^d \langle \bu^{(k)}, \bv^{(k)} \rangle
\]
and extended to the whole space by bilinearity. We use $\langle \cdot, \cdot \rangle$ for all inner products for simplicity in notation, since the one we are referring to
can be understood from the context.
With this inner product, the set $\{\be_{i_1} \otimes \cdots \otimes \be_{i_d} \mid i_{k} \in [n_k] \text{ for all } k \in [d]\}$ is an orthonormal basis of~$\R^{n_1} \otimes \cdots \otimes \R^{n_d}$, where we use the same $\{\be_{i}\}$ to denote the canonical basis of $\R^{n_k}$ for all~$k \in [d]$ again to simplify the notation.
Let $t_{i_1, \dots, i_d}$ denote the coordinates of $\T$ with respect to this basis, so that $\T = \sum_{i_1, \dots, i_d} t_{i_1, \dots, i_d} \be_{i_1} \otimes \cdots \otimes \be_{i_d}$ and $t_{i_1, \dots, i_d} = \T(\be_{i_1}, \dots, \be_{i_d})$. Hence, the inner product between two tensors $\cS$ and $\cT$ can be written as $\langle \cS, \cT \rangle = \sum_{i_1, \dots, i_d} s_{i_1, \dots, i_d} t_{i_1, \dots ,i_d}$.
Using these coordinates, we interchangeably refer to tensors as multilinear maps or multidimensional arrays, i.e., we use the identification $\R^{n_1} \otimes \cdots \otimes \R^{n_d} \cong \R^{n_1 \times \cdots \times n_d}$.

\subsection*{Transforming tensors}
The transpose of a matrix $U \in \R^{m \times n}$ is denoted by $U^\top \in \R^{n \times m}$.
A tuple of matrices $(U^{(1)}, \dots, U^{(d)}) \in \R^{m_1 \times n_1} \times \cdots \times \R^{m_d \times n_d}$ defines a linear map from $\R^{n_1 \times \cdots \times n_d}$ to $\R^{m_1 \times \cdots \times m_d}$ as follows.
Given $\cS \in \R^{n_1 \times \cdots \times n_d}$, the tensor $\T = (U^{(1)}, \dots, U^{(d)}) \cdot \cS \in \R^{m_1 \times \cdots \times m_d}$ is the multilinear map given by 
\[
\T(\bx^{(1)}, \dots, \bx^{(d)}) = \cS({U^{(1)}}^\top \bx^{(1)}, \dots, {U^{(d)}}^\top \bx^{(d)})
\]
for all vectors $\bx^{(k)} \in \R^{m_k}$. In coordinates, for each $(i_1, \dots, i_d) \in [m_1] \times \cdots \times [m_d]$ we have
\[
t_{i_1, \dots, i_d} = \sum_{j_1 = 1}^{n_1} \cdots \sum_{j_d = 1}^{n_d} s_{j_1, \dots, j_d} u^{(1)}_{i_1, j_1} \cdots u^{(d)}_{i_d, j_d}
\]
or, equivalently,
$
\cT = (U^{(1)}, \dots, U^{(d)}) \cdot \cS = \sum_{j_1, \dots, j_d} s_{j_1, \dots, j_d} \bu^{(1)}_{j_1} \otimes \cdots \otimes \bu^{(d)}_{j_d},
$
where $\bu^{(k)}_j$ denotes the $j$-th column of $U^{(k)}$. For example, when $d=2$, we get $(U, V) \cdot \cS = U \cS  V^{\top}$.

\subsection*{Change of basis} For a positive integer $n$, let $\rO(n) = \{ U \in \R^{n \times n} \mid U^\top U = I_n\}$ be the orthogonal group in dimension $n$, where $I_n \in \R^{n \times n}$ is the identity matrix. The group $\rO(n_1) \times \cdots \times \rO(n_d)$ acts on~$\R^{n_1 \times \cdots \times n_d}$ via
\[
\begin{array}{ccc}
    \left( \rO(n_1) \times \cdots \times \rO(n_d) \right)  \times  \R^{n_1 \times \cdots \times n_d} & \to & \R^{n_1 \times \cdots \times n_d} \\[0.5em]
     \left((U^{(1)}, \dots, U^{(d)}), \T \right) & \mapsto & (U^{(1)}, \dots, U^{(d)}) \cdot \T
\end{array}
\]
Geometrically, each $(U^{(1)}, \dots, U^{(d)}) \in \rO(n_1) \times \cdots \times \rO(n_d)$ performs an orthogonal change of basis on~$\R^{n_1 \times \cdots \times n_d}$ sending $\be_{i_1} \otimes \cdots \otimes \be_{i_d}$ to $\bu^{(1)}_{i_1} \otimes \cdots \otimes \bu^{(d)}_{i_d}$ for each
$(i_1, \dots, i_d) \in [n_1] \times \cdots \times [n_d]$.

\subsection*{Flattenings and Tucker decomposition} Given a tensor $\T \in \R^{n_1 \times \cdots \times n_d}$ and $k \in [d]$, its \emph{mode-$k$ flattening} is the matrix $T^{(k)} \in \R^{n_k \times (\prod_{\ell \neq k} n_\ell)}$ obtained by viewing $\T$ as a linear map from $\bigotimes_{\ell \in [d] \setminus \{k\}}\R^{n_\ell}$ to $\R^{n_k}$. A \emph{Tucker decomposition} of $\T$ is
\[
\T = (U^{(1)}, \dots, U^{(d)}) \cdot \cS
\]
where each $U^{(k)}$ is the matrix of left singular vectors of $T^{(k)}$, i.e., $T^{(k)} = U^{(k)} \Sigma^{(k)} {V^{(k)}}^\top$ with $U^{(k)} \in \rO(n_k)$ and $V^{(k)} \in \rO(\prod_{\ell \neq k} n_\ell)$ and $\Sigma^{(k)} \in \R^{n_k \times (\prod_{\ell \neq k} n_\ell)}$ diagonal. The slices of the \emph{core tensor} $\cS$ in a Tucker decomposition are orthogonal to each other:  $\langle \cS_{i_k = \alpha}, \cS_{i_k = \beta}\rangle = 0$ if~$\alpha \neq \beta$, where $\cS_{i_k = \alpha} := (I_{n_1}, \dots, I_{n_{k-1}}, \be_{\alpha}^\top, I_{n_{k+1}}, \dots, I_{n_d}) \cdot \cS \in \R^{n_1 \times \cdots \times n_{k-1} \times n_{k+1}  \times \cdots \times n_d}$. The Tucker decomposition is also known as higher-order singular value decomposition (HOSVD)~\cite{de2000multilinear}.

\subsection*{Singular vector tuple} Let $\bbS^{n-1}=  \{ \bu \in \R^n \mid \| \bu \| = 1\}$ denote the unit sphere in $\R^n$. Given a tensor $\cT \in \R^{n_1 \times \cdots \times n_d}$, a tuple of vectors $(\bu^{(1)}, \dots, \bu^{(d)}) \in \bbS^{n_1-1} \times \cdots \times \bbS^{n_d-1}$ is a~\emph{singular vector tuple} of $\cT$ with \emph{singular value} $\lambda \in \R$ if for all $k \in [d]$ we have
\[
\cT (\bu^{(1)}, \dots, \bu^{(k-1)}, \; \cdot\; , \bu^{(k+1)}, \dots, \bu^{(d)}) = \lambda \bu^{(k)}.
\]
Here, $\cdot$ denotes that we are not contracting with any vector along the $k$-th factor. When $d=2$, we recover the classical notion of singular vector pairs:
$$
\T(\bu^{(1)}, \cdot) := \T^\top \bu^{(1)} = \lambda \bu^{(2)} \quad  \text{and} \quad \T(\cdot, \bu^{(2)}) := \T \bu^{(2)} = \lambda \bu^{(1)}.
$$
Singular vector tuples correspond to \emph{critical rank-one approximations} $\lambda \bu^{(1)} \otimes \cdots \otimes \bu^{(d)}$, i.e., critical points of the squared distance function $\mathrm{dist}^2_\T(\bv^{(1)} \otimes \cdots \otimes \bv^{(d)}) = \|\T - \bv^{(1)} \otimes \cdots \otimes \bv^{(d)} \|^2$; see, for example, \cite[Proposition 2.5]{ribot2026decomposing} for details.

\section{Basis-aligned two-orthogonal decompositions} \label{sec:bato-decompositions}

    A tensor decomposition $\T = \sum_{i=1}^r \bu_i^{(1)} \otimes \cdots \otimes \bu_i^{(d)}$ is \emph{two-orthogonal} if every pair of rank-one terms in the decomposition is orthogonal in at least two factors: for all $i \neq j \in [r]$ there exist $k \neq \ell \in [d]$ such that $\langle \bu_i^{(k)}, \bu_j^{(k)} \rangle = 0$ and $\langle \bu_i^{(\ell)}, \bu_j^{(\ell)} \rangle = 0$. 
Every rank-one term in a two-orthogonal decomposition gives a singular vector tuple of the tensor. Therefore, two-orthogonal decompositions may be obtained by iteratively finding critical rank-one approximations. Two-orthogonality is both necessary and sufficient for this property to hold for any ordering of the summands \cite[Theorem 1.2]{ribot2026decomposing}.

Two-orthogonal decompositions may use multiple orthogonal bases on each factor. In contrast, the matrix SVD and the Tucker decomposition use a single orthogonal basis per factor, and we are interested in retaining this property of the SVD.
A two-orthogonal decomposition $\cT = \sum_{i=1}^{r} \bu^{(1)}_i \otimes \cdots \otimes \bu^{(d)}_i$ is
    \emph{basis-aligned} if $\bu^{(k)}_i$ and $\bu^{(k)}_j$ are collinear or orthogonal for all~$k \in [d]$ and all $i,j \in [r]$. A tensor admitting such a decomposition is called \emph{basis-aligned two-orthogonal} (\emph{bato}).

\begin{example}
    Every matrix is bato, since the singular value decomposition is a bato decomposition. For matrices, the notions of two-orthogonal and bato decompositions are equivalent.
\end{example}

\begin{example}
    The tensor $\be_2 \otimes \be_1 \otimes \be_1 + \be_1 \otimes \be_2 \otimes \be_1 + \be_1 \otimes \be_1 \otimes \be_2 $ is bato. The tensor $\be_1 \otimes \be_1 \otimes \be_1 + (\be_1 + \be_2) \otimes \be_2 \otimes \be_2$ is two-orthogonal but not bato. The tensor $\be_1^{\otimes 3} + (\be_1 + \be_2)^{\otimes 3}$ is not two-orthogonal.
\end{example}

Unlike for matrices, computing all the critical rank-one approximations of a general tensor is infeasible in practice. Even computing the best rank-one approximation is NP-hard in general \cite{hillar2013most}. 
The Tucker decomposition and its variants aim to understand the structure of tensors through their mode-$k$ flattenings, where one can use the spectral machinery of matrices. However, for general tensors $\T \in \R^{n_1 \times \cdots \times n_d}$, there is no relation between the singular vectors of the mode-$k$ flattenings $T^{(k)} \in \R^{n_k \times \prod_{\ell \neq k} n_\ell}$ and the singular vectors of the original tensor $\T$.
Bato tensors can be characterized as those with a structured CP decomposition that is a sum of critical rank-one approximations and may be obtained from the singular vectors of the flattenings. 

\begin{proof}[Proof of \Cref{thm:bato-tucker}]
    We show that the singular vector tuples that appear in a bato decomposition of a tensor can be obtained from the left singular vectors of its flattenings, and that this property characterizes bato tensors.
    
    Suppose that $\T \in \R^{n_1} \otimes \cdots \otimes \R^{n_d}$ admits a Tucker decomposition
    $$
    \T = (U^{(1)}, \dots, U^{(d)}) \cdot \cS = \sum_{j_1 = 1}^{n_1} \cdots \sum_{j_d = 1}^{n_d}s_{j_1, \dots, j_d } \bu_{j_1}^{(1)} \otimes \cdots \otimes \bu_{j_d}^{(d)}
    $$
    such that every nonzero summand $s_{j_1, \dots, j_d} \bu_{j_1}^{(1)} \otimes \cdots \otimes \bu_{j_d}^{(d)}$ is a critical rank-one approximation of $\T$, where $\bu_j^{(k)}$ is the $j$-th column of $U^{(k)} \in \rO(n_k)$. Let $(i_1, \dots, i_d) \in [n_1] \times \cdots \times [n_d]$ such that $s_{i_1, \dots, i_d} \neq 0$, so that $(\bu_{i_1}^{(1)}, \dots, \bu_{i_d}^{(d)})$ is a singular vector tuple of $\cT$ with singular value~$s_{i_1, \dots, i_d}$. Then, for any $k \in [d]$ we have
    \[
    s_{i_1, \dots, i_d} \bu_{i_k}^{(k)} = \cT (\bu_{i_1}^{(1)}, \dots, \bu_{i_{k-1}}^{(k-1)}, \; \cdot \; ,\bu_{i_{k+1}}^{(k+1)}, \dots, \bu_{i_d}^{(d)}) = \sum_{j_k = 1}^{n_k} s_{i_1, \dots, i_{k-1}, j_k, i_{k+1}, \dots, i_d} \bu^{(k)}_{j_k},
    \]
    which implies that $s_{i_1, \dots, i_{k-1}, j_k, i_{k+1}, \dots, i_d} = 0$ for all $j_{k} \in [n_k] \setminus \{ i_k\}$ because the vectors $\{\bu_{j_k}^{(k)}\}_{j_k \in [n_k]}$ are linearly independent for being orthogonal. Hence, every pair of distinct tuples $(i_1, \dots, i_d), (j_1, \dots, j_d)$ such that $s_{i_1, \dots, i_d}, s_{j_1, \dots, j_d} \neq 0$ differ in at least two entries, so $\sum_{j_1, \dots, j_d}s_{j_1, \dots, j_d } \bu_{j_1}^{(1)} \otimes \cdots \otimes \bu_{j_d}^{(d)}$ is a bato decomposition of $\cT$.

    Conversely, suppose that $\cT \in \R^{n_1} \otimes \cdots \otimes \R^{n_d}$ is bato, so there exist orthonormal bases $\{\bu_{j}^{(k)} \mid j \in [n_k]\}$ of $\R^{n_k}$ and scalars $s_{j_1, \dots, j_d}$ such that~$\cT \!= \sum_{j_1, \dots, j_d} s_{j_1, \dots, j_d} \bu_{j_1}^{(1)} \otimes \cdots \otimes \bu_{j_d}^{(d)}$.
    Two-orthogonality requires that each pair of distinct tuples $(i_1, \dots, i_d), (j_1, \dots, j_d)$ such that $s_{i_1, \dots, i_d}, s_{j_1, \dots, j_d} \neq 0$ differ in at least two entries. 
    This implies that any nonzero term $s_{j_1, \dots, j_d} \bu_{j_1}^{(1)} \otimes \cdots \otimes \bu_{j_d}^{(d)}$ is a critical rank-one approximation of $\cT$.
    We need to show that each $\bu_{j_k}^{(k)}$ such that $s_{j_1, \dots, j_d} \neq 0$ for some $j_\ell \in [n_\ell]$ is a left singular vector of the mode-$k$ flattening $T^{(k)} \in \R^{n_k \times (\prod_{\ell \neq k} n_\ell)}$. Fix $k \in [d]$ and consider a vectorization isomorphism $\mathrm{vect} : \bigotimes_{\ell \neq k} \R^{n_\ell} \to \R^{\prod_{\ell \neq k} n_\ell}$ induced by identifying the canonical bases. Then, we have
    \begin{equation*}\label{eq:flattening_bato}
        T^{(k)} = \sum_{j_{k}} \sqrt{\sum_{j_\ell \mid \ell \neq k} s^2_{j_1, \dots, j_d}} \bu^{(k)}_{j_k} \otimes \mathrm{vect} \left(  \frac{1}{\sqrt{\sum_{j_\ell \mid \ell \neq k} s^2_{j_1, \dots, j_d}}}
    \sum_{j_\ell \mid \ell \neq k}s_{j_1, \dots, j_d} \bigotimes_{\ell \neq k} \bu_{j_\ell}^{(\ell)}\right),
    \end{equation*}
    where the terms with zero coefficient $\sum_{j_\ell \mid \ell \neq k} s^2_{j_1, \dots, j_d} = 0$ are omitted. The right-hand side of this expression is a singular value decomposition of $T^{(k)}$. Hence, we get the Tucker decomposition $\cT = (U^{(1)}, \dots, U^{(d)}) \cdot \cS$
    by letting $\bu_1^{(k)}, \dots, \bu_{n_k}^{(k)}$
    be the columns of $U^{(k)} \in \rO(n_k)$ and arranging the scalars~$s_{i_1, \dots, i_d}$ into a tensor $\cS \in \R^{n_1\times \cdots \times n_d}$.
\end{proof}

The proof of \Cref{thm:bato-tucker} unveils that bato decompositions can be expressed as Tucker decompositions with a sparse core tensor $\cS$. The sparsity pattern requires that indices of nonzero entries must differ in at least two entries. The \emph{Hamming distance} on $[n_1] \times \cdots \times [n_d]$ is given by the number of positions in which tuples differ: $\mathrm{Hamming} (\bi, \bj) = |\{ k \in [d] \mid i_k \neq j_k\}|$.

\begin{corollary} \label{cor:bato-support}
    Bato decompositions are of the form $\sum_{\bi \in L} s_{\bi} \bu_{i_1}^{(1)} \otimes \cdots \otimes \bu_{i_d}^{(d)}$ where
    \begin{enumerate}[label=(\roman*)]
    \item $L \subset [n_1] \times \cdots \times [n_d]$ is such that $\mathrm{Hamming}(\bi, \bj) \geq 2$ for all distinct $\bi, \bj \in L$,
    \item $s_\bi \in \R$ for all $\bi \in L$,
    \item $\{\bu_{j}^{(k)} \mid j \in [n_k]\}$ is an orthonormal basis of $\R^{n_k}$ for each $k \in [d]$.
    \end{enumerate}
\end{corollary}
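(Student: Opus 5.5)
The plan is to take an arbitrary bato decomposition $\cT = \sum_{i=1}^r \bv_i^{(1)} \otimes \cdots \otimes \bv_i^{(d)}$ and rewrite it in the claimed normal form. We first normalize. Each summand is rank one, so we may write $\bv_i^{(k)} = \lambda_i^{(k)} \hat\bv_i^{(k)}$ with $\|\hat\bv_i^{(k)}\| = 1$, and absorb the product of norms into a scalar $c_i$. Collinearity and orthogonality are unchanged by rescaling, so the unit vectors $\hat\bv_i^{(k)}$ are still pairwise collinear or orthogonal.

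Next, for each fixed $k$, we group the vectors $\{\hat\bv_i^{(k)}\}_{i}$ into collinearity classes. Collinearity is an equivalence relation on nonzero vectors. Any two distinct classes are orthogonal, since two representatives are either collinear or orthogonal and by construction they are not collinear. We pick one unit representative $\bu_j^{(k)}$ from each class. This gives an orthonormal set, which we extend by Gram--Schmidt to an orthonormal basis $\{\bu_j^{(k)} \mid j \in [n_k]\}$ of $\R^{n_k}$; this yields (iii). Every $\hat\bv_i^{(k)}$ equals $\pm \bu_{j}^{(k)}$ for a unique index $j =: \iota_k(i)$. Absorbing the signs into $c_i$, each summand becomes $s_{\bi}\, \bu_{i_1}^{(1)} \otimes \cdots \otimes \bu_{i_d}^{(d)}$ with index tuple $\bi = (\iota_1(i), \dots, \iota_d(i))$.

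Finally, let $L$ be the set of these index tuples, and check (i). Take two distinct summands $i \neq i'$. Two-orthogonality gives two factors $k \neq \ell$ in which they are orthogonal. In those factors they cannot be the same basis vector up to sign, so $\iota_k(i) \neq \iota_k(i')$ and $\iota_\ell(i) \neq \iota_\ell(i')$. Hence their index tuples differ in at least two coordinates. In particular distinct summands give distinct tuples, so the map $i \mapsto \bi$ is injective and no coefficients need to be merged. This gives (ii) with $s_{\bi}$ equal to the corresponding signed scalar.

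One point needs care: two-orthogonality is a condition on pairs of summands, so we must make sure no two summands collapse onto the same tuple. The injectivity argument above settles this. Alternatively, the statement is essentially the converse half of the proof of \Cref{thm:bato-tucker}, read in reverse, and one could simply cite that argument.
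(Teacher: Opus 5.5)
Your proof is correct and takes the same approach as the paper. The paper obtains this corollary from the converse half of the proof of \Cref{thm:bato-tucker}, where it treats the passage from a basis-aligned decomposition to one orthonormal basis per factor as immediate and then uses two-orthogonality, as you do, to force distinct support tuples to have Hamming distance at least two; your write-up spells out the steps the paper leaves implicit (normalizing, grouping by collinearity, completing to orthonormal bases, and checking that no two summands share a tuple).
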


\begin{example}[Odeco tensors] \label{ex:odeco}
An \emph{orthogonally decomposable (odeco)} tensor is of the form
\[
    \T=\sum_{i=1}^r \lambda_i\,
    \bu^{(1)}_i\otimes\cdots\otimes \bu^{(d)}_i,
\]
where, for each mode \(k \in [d]\), the vectors
\(\bu^{(k)}_1,\ldots,\bu^{(k)}_r\) are orthonormal, and $\lambda_i \in \R$.  Every odeco tensor is bato: the support of the core tensor is contained in the diagonal set
\[
    L = \{(i,i,\ldots,i) \mid i\in [r]\}.
\]
\end{example}

\subsection{Partial Latin hyperrectangles}
In this section, we study the support of bato decompositions as introduced in \Cref{cor:bato-support}.
A Latin square is an $n \times n$ array filled with symbols in $[n]$ such that each symbol occurs exactly once in each row and column. A partial Latin square is a partially filled array with no symbol occurring more than once in any row or column. Encoding a filled cell in row $i$, column $j$, containing symbol $k$ by the triple $(i,j,k)$ gives a subset of $[n] \times [n] \times [n]$ in which any two distinct triples differ in at least two coordinates, i.e., they are at Hamming distance at least two, as in \Cref{cor:bato-support}.

\begin{example}
    The following is a Latin square for $n=2$:
    \[
	L = \{ (1, 1, 1), (1,2,2), (2,1,2), (2,2,1)\} \leftrightarrow \begin{array}{|c|c|}
	\hline
	1 & 2 \\
	\hline
	2 & 1\\
	\hline
	\end{array}
	\]
    and it gives bato decompositions of the form
    \[
    s_{111} \bu^{(1)}_{1} \otimes \bu^{(2)}_{1} \otimes \bu^{(3)}_{1} + s_{122} \bu^{(1)}_{1} \otimes \bu^{(2)}_{2} \otimes \bu^{(3)}_{2} + s_{212} \bu^{(1)}_{2} \otimes \bu^{(2)}_{1} \otimes \bu^{(3)}_{2} + s_{221}\bu^{(1)}_{2} \otimes \bu^{(2)}_{2} \otimes \bu^{(3)}_{1}.
    \]
\end{example}
\begin{example}
    For $n=9$, solutions to a Sudoku give Latin squares.
\end{example}

Allowing rectangular and higher-order formats leads to partial Latin hyperrectangles, which have been extensively studied \cite{andres2019colouring, britz2015maximal, donovan2024maximal, ostergaard2005new}.

\begin{definition} \label{def:latin}
	A \emph{partial Latin hyperrectangle} is a set $L \subset [n_1] \times \cdots \times [n_d]$ such that $\mathrm{Hamming}(\bi, \bj) \geq 2$ for all distinct $\bi, \bj \in L$. A partial Latin hyperrectangle is \emph{maximal} if it is not a proper subset of another partial Latin hyperrectangle.
\end{definition}

We write $\Latin(n_1, \dots, n_d)$ for the set of partial Latin hyperrectangles $L \subset [n_1]\times\cdots\times[n_d]$, and we write $\MaxLatin(n_1,\ldots,n_d)$ for the set of maximal elements in $\Latin(n_1, \dots, n_d)$.
With this notation, Latin squares are elements in $\Latin(n,n,n)$ with cardinality $|L| = n^2$.
\begin{remark}
    Not all $L \in \MaxLatin(n, n, n)$ are Latin squares. For example, for $n=2$,
\[
	L = \{ (1, 1, 1), (2,2,2)\} \leftrightarrow \begin{array}{|c|c|}
	\hline
	1 &  \\
	\hline
	 & 2\\
	\hline
	\end{array}
\]
is maximal and has cardinality $2$.
\end{remark}

\begin{example}
	When $d=2$ and $n_1 = n_2 = n$, maximal partial Latin hyperrectangles are of the form  $L = \{ (i, \sigma(i)) \mid i \in [n]\}$ for some permutation $\sigma$ on $[n]$. This gives bato decompositions of the form $U S V^\top$ such that $U, V \in \rO(n)$ and the only nonzero entries of~$S$ are $s_{i, \sigma(i)}$ for $i \in [n]$, so each row and column of $S$ has at most one nonzero entry.
    Taking the permutation $\sigma$ to be the identity corresponds to diagonal matrices $S$, and we recover the singular value decomposition. 
\end{example}

\begin{example}
Elements $L \in \Latin(n, n, n, n)$ with $|L| = n^3$ are Latin cubes. More generally, elements $L \in \Latin(n, \overset{(d)}{\dots}, n)$ with $|L| = n^{d-1}$ are Latin hypercubes.
\end{example}

\begin{remark}
	The elements of $L \in \Latin(n_1, \dots, n_d)$ can be represented by an $n_1 \times \cdots \times n_{d-1}$ partially filled array $A_L$ so that $A_L(i_1, \dots, i_{d-1}) = i_d$ if and only if $(i_1, \dots, i_d) \in L$. Having $\mathrm{Hamming}(\bi, \bj) \geq 2$ for all distinct $\bi, \bj \in L$ is equivalent to having each number in ~$[n_d]$ appear at most once in $A_L$ when we fix all but one coordinate.
\end{remark}

\Cref{cor:bato-support} implies that the Tucker decomposition of a
bato tensor has a core tensor supported on a partial Latin hyperrectangle, which leads to the following definition.

\begin{definition}[$L$-bato tensors]
    Given $L \in \Latin(n_1, \dots, n_d)$, the set of $L$-bato tensors in $\R^{n_1 \times \cdots \times n_d}$ is
    \[
    X_L = \left\{(U^{(1)}, \dots, U^{(d)}) \cdot \cS \mid U^{(k)} \in \rO(n_k), \cS \in \R^{n_1 \times \cdots \times n_d}, s_{\bi} = 0 \text{ for all } \bi \notin L \right\}.
    \]
    The set of bato tensors in $\R^{n_1 \times \cdots \times n_d}$ is
    \[
    X = \bigcup_{L \in \Latin(n_1, \dots, n_d)} X_L.
    \]
\end{definition}

\begin{remark}[Relation to free tensors]
The support condition in \Cref{def:latin} is known as \emph{freeness} in algebraic complexity \cite{conner2021towards}. In our terminology, a tensor is free if it is supported on a partial Latin hyperrectangle after invertible changes of basis. The distinction is that membership in \(X\) requires that such a support be obtained by orthogonal changes of basis.
\end{remark}

\subsection{Generic uniqueness} Given any matrix decomposition $M = \sum_{i=1}^r \ba_i \otimes \bb_i =  A B^\top$ with $A \in \R^{m \times r}$ and $B \in \R^{n \times r}$ we also get the decomposition $M = (AU) (BU)^\top$ for any $U \in \rO(r)$. In order to obtain a unique decomposition, we may require the decomposition to be bato, which leads to the singular value decomposition. The SVD is generically unique (in this context, generic means that the singular values are distinct).
Bato decompositions of higher-order tensors are not always unique:
\begin{equation}\label{eq:non-unique-bato}
    \be_1 \otimes \be_1 \otimes \be_1 + \be_1 \otimes \be_2 \otimes \be_2 + \be_2 \otimes \be_1 \otimes \be_2 + \be_2 \otimes \be_2 \otimes \be_1 = \frac{1}{2}(\be_1 + \be_2)^{\otimes 3} + \frac{1}{2}(\be_1 - \be_2)^{\otimes 3}.
\end{equation}
However, as for matrices, non-unique bato decompositions are special.
Next, we show that bato decompositions are generically unique.
That is, given any $L \in \Latin(n_1, \dots, n_d)$, a generic $L$-bato tensor $\cT \in X_L$ has a unique bato decomposition.
Generic means that the set of $L$-bato tensors without a unique bato decomposition is contained in a proper algebraic subset of~$X_L$.
It is important to consider all possible $L \in \Latin(n_1, \dots, n_d)$ as they lead to different irreducible components of the bato variety (see \Cref{thm:irreducible-components-bato}). Therefore, when we say that bato decompositions are generically unique, we mean that the set of bato tensors with a unique bato decomposition is a Zariski open dense set of the bato variety.

\begin{proof}[Proof of \Cref{thm:uniqueness-bato}]
We show that bato decompositions are generically unique using that the matrix SVD is generically unique and applying this to the flattenings of our tensors.
    Consider any partial Latin hyperrectangle $L \subset [n_1] \times \cdots \times [n_d]$ and, for each $k \in [d]$, let $\{\bu_{i_k}^{(k)} \mid i_k \in [n_k]\}$ be any orthonormal basis of $\R^{n_k}$. Let $s_\bi \in \R$ for $\bi \in L$ be generic and consider the bato decomposition $\cT = \sum_{\bi \in L} s_\bi \bu_{i_1}^{(1)} \otimes \cdots \otimes \bu_{i_d}^{(d)}$. 
    Using \Cref{thm:bato-tucker}, a sufficient condition for the singular value decompositions of the flattenings to be unique is that
    \begin{equation}\label{eq:genericity-uniqueness}
    \sum_{\bi \in L \mid i_k =a} s_{\bi}^2 \neq \sum_{\bi \in L \mid i_k =b} s_{\bi}^2
    \end{equation}
    for all $k \in [d]$ and all distinct $a, b \in [n_k]$ such that $\{\bi \in L \mid i_k = a\} \neq \varnothing$.
    These conditions hold for generic scalars $s_\bi$ with $\bi \in L$.
    Once we know that the orthonormal vectors $\bu_{j}^{(k)}$ can be uniquely recovered (up to sign flip and permutation) from the flattenings of $\T$, the scalars $s_\bi$ can also be uniquely recovered because $s_\bi = \T(\bu_{i_1}^{(1)}, \dots, \bu_{i_d}^{(d)})$ for all $\bi \in [n_1] \times \cdots \times [n_d]$. The singular vectors corresponding to zero singular values may not be uniquely determined, but they do not affect the bato decomposition. The sign flip and permutation of orthogonal vectors does not affect the bato decomposition either.
\end{proof}

\begin{remark}
    The conditions for a bato decomposition $\sum_{\bi \in L}s_{\bi} \bu_{i_1}^{(1)} \otimes \cdots \otimes \bu_{i_d}^{(d)}$ to
    be generic depend only on the scalars $s_\bi$, not on the orthonormal bases $\{\bu_i^{(k)} \mid i \in [n_k]\}$.
\end{remark}

\subsection{Bato rank}
Equation \eqref{eq:non-unique-bato} shows that a bato tensor can admit two bato decompositions with different number of summands. Recall that the number of rank-one summands in a bato decomposition may exceed the tensor rank, i.e., bato decompositions may not be minimal CP decompositions.
This leads us to the following definition.
\begin{definition}
    The \emph{bato rank} of a tensor $\T$, denoted $\batorank(\cT)$, is the minimum number of summands in a bato decomposition of $\T$. If $\cT$ is not bato, $\batorank(\cT) := \infty$.
\end{definition}

The rank and bato rank of a matrix are the same, by the SVD. 
For higher-order tensors, the two notions coincide if the rank is small enough.
Given a bato tensor $\cT \in \R^{n_1 \times \cdots \times n_d}$, one has $\rank(\cT) \leq \batorank(\cT)$, and the inequality can be strict. We suspect that equality holds if $\cT$ is concise and the bato rank of $\cT$ is not greater than the generic rank in $\C^{n_1\times \cdots \times n_d}$; see \cite[\S 2.2]{ribot2026decomposing} for some evidence.

The problem of computing the bato rank can be phrased as follows. Given a bato tensor $\cT \in \R^{n_1 \times \dots \times n_d}$, consider all tensors of the form $\cS = (Q_1, \dots, Q_d) \cdot \cT$ with $Q_k \in \rO(n_k)$ for all~$k$. Then, for those $\cS$ supported on a partial Latin hyperrectangle, choose the sparsest one: the one with fewer nonzero entries. Equivalently, one has
    \[
    \batorank(\cT) = \min\{|L| \mid L \in \Latin(n_1, \dots, n_d), \cT \in X_L \}.
    \]

\begin{remark}
    For tensors of order $d \geq 3$, the notion of bato rank does not coincide with a classical notion of $X$-rank defined by taking secants of a given variety \cite{landsberg2011tensors}.
\end{remark}

Canonical polyadic decompositions of tensors of order greater than two have better identifiability properties than their matrix counterparts: they are unique for small ranks \cite{KRUSKAL197795, chiantini2012generic}. \Cref{thm:uniqueness-bato} implies that imposing the bato structure on CP decompositions leads to generic identifiability regardless of the bato rank.

\begin{corollary}
    For any partial Latin hyperrectangle $L \subset [n_1] \times \!\cdots \!\times [n_d]$, a bato decomposition $\T = \sum_{\bi \in L} s_\bi \bu_{i_1}^{(1)} \otimes \cdots \otimes \bu_{i_d}^{(d)}$ with generic $s_\bi$ is unique, so the bato rank of $\cT$ is~$|L|$.
\end{corollary}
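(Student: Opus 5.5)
The plan is to read the corollary off the proof of \Cref{thm:uniqueness-bato}. We need to show that this argument gives uniqueness of the bato decomposition for generic coefficients $(s_\bi)_{\bi\in L}$, and then that uniqueness forces the bato rank to equal $|L|$.

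First, fix $L$ and the orthonormal bases. Take the $s_\bi$ generic in the following sense: all $s_\bi \neq 0$, and the inequalities \eqref{eq:genericity-uniqueness} hold. Each is a nontrivial polynomial condition on $(s_\bi)\in\R^{L}$, so they hold on a Zariski open dense set. For the inequalities, each row-sum $\sum_{i_k=a}s_\bi^2$ with nonempty index set is a nonzero polynomial. Distinct $a\neq b$ give disjoint sets of variables, so the difference is not identically zero. Nonemptiness is needed only for $a$; if the $b$-set is empty, the condition is just that the $a$-sum is nonzero.

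By \Cref{thm:bato-tucker}, any bato decomposition of $\cT$ has its factor vectors among left singular vectors of the flattenings $T^{(k)}$. This holds if the factors are completed to orthonormal bases as in \Cref{cor:bato-support}. The nonzero singular values of $T^{(k)}$ are the square roots of the row-sums above, and they are pairwise distinct. Hence the corresponding singular vectors $\bu^{(k)}_a$ with nonempty row in $L$ are determined up to sign. The remaining basis vectors span the kernel of ${T^{(k)}}^\top$.

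Now take another bato decomposition $\cT=\sum_{\bj\in L'} s'_\bj \bv^{(1)}_{j_1}\otimes\cdots\otimes\bv^{(d)}_{j_d}$ with all $s'_\bj\neq 0$. The main subtle step is that any factor $\bv^{(k)}_{j_k}$ occurring in a nonzero term must lie in the column space of $T^{(k)}$. This holds because the row-sums of this decomposition are the squared singular values of $T^{(k)}$, and a row with a nonzero term has a positive row-sum. Hence every factor that occurs is, up to sign, one of the $\bu^{(k)}_a$ with $a$ appearing in $L$.

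Consequently, every rank-one term in the new decomposition is of the form $\pm\bu^{(1)}_{a_1}\otimes\cdots\otimes\bu^{(d)}_{a_d}$. Its coefficient equals $\pm\cT(\bu^{(1)}_{a_1},\dots,\bu^{(d)}_{a_d})$, which is $\pm s_\bi$ if $\bi=(a_1,\dots,a_d)\in L$ and $0$ otherwise. So the two decompositions have the same set of rank-one summands, and the decomposition is unique up to reordering and sign absorption.

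Finally, for the bato rank: any bato decomposition of $\cT$ coincides with the given one. Since all $s_\bi\neq0$, it has exactly $|L|$ nonzero summands. Therefore $\batorank(\cT)=|L|$, using the characterization $\batorank(\cT)=\min\{|L'| : \cT\in X_{L'}\}$, where summands with zero coefficient are discarded.
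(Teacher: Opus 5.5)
Your proposal is correct and follows essentially the same route as the paper: the genericity conditions \eqref{eq:genericity-uniqueness} make the nonzero singular values of each flattening pairwise distinct, so the factor vectors are recovered up to sign from the SVDs of the flattenings, and the coefficients are then recovered via $s_\bi = \cT(\bu^{(1)}_{i_1},\dots,\bu^{(d)}_{i_d})$. You also make explicit two points the paper only glosses over: you require all $s_\bi \neq 0$, which is what makes the bato rank exactly $|L|$, and you show that factors of nonzero terms in a competing decomposition cannot lie in the zero-singular-value eigenspace.
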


At this point, it is natural to ask how many summands a bato decomposition can have. The maximum ranks of spaces of tensors of order greater than two are not known in general. We determine the maximum bato rank.

\begin{proposition} \label{prop:maximal-bato-rank}
    The maximum bato rank in $\R^{n_1 \times \cdots \times n_d}$ with $n_1 \leq \cdots \leq n_d$ is $\prod_{k=1}^{d-1}n_k$.
\end{proposition}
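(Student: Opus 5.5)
We prove two things: every partial Latin hyperrectangle in $[n_1]\times\cdots\times[n_d]$ has at most $N:=\prod_{k=1}^{d-1}n_k$ elements, and some bato tensor has bato rank exactly $N$. Together with the characterization
\[
\batorank(\cT)=\min\{|L| \mid L\in\Latin(n_1,\dots,n_d),\ \cT\in X_L\},
\]
these give the claim.

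\emph{Upper bound.} Take $L\in\Latin(n_1,\dots,n_d)$ and the projection $\pi:L\to[n_1]\times\cdots\times[n_{d-1}]$ that forgets the last coordinate. If $\pi(\bi)=\pi(\bj)$ for distinct $\bi,\bj\in L$, then $\bi$ and $\bj$ differ only in the last coordinate, so $\mathrm{Hamming}(\bi,\bj)=1$. This contradicts \Cref{def:latin}, so $\pi$ is injective and $|L|\le N$. Every bato tensor lies in some $X_L$ by \Cref{cor:bato-support}, so its bato rank is at most $N$. Dropping the last coordinate is the right choice because $n_d$ is the largest dimension, so this bound is the smallest of the $d$ analogous bounds.

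\emph{Lower bound.} We need $L\in\Latin(n_1,\dots,n_d)$ with $|L|=N$. Take
\[
L=\Bigl\{(i_1,\dots,i_{d-1},\,\phi(i_1,\dots,i_{d-1})) \Bigm| i_k\in[n_k]\Bigr\},\qquad \phi(i_1,\dots,i_{d-1})=1+\Bigl(\sum_{k=1}^{d-1}(i_k-1)\bmod n_d\Bigr).
\]
Suppose two elements of $L$ differ in exactly one of the first $d-1$ coordinates, say $i_k\neq i_k'$. Then $0<|i_k-i_k'|\le n_k-1<n_d$, so $i_k\not\equiv i_k' \pmod{n_d}$ and their last coordinates differ. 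Hence the Hamming distance is at least $2$. Elements differing in two or more of the first $d-1$ coordinates are already at distance at least $2$. So $L$ is a partial Latin hyperrectangle with $|L|=N$.

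\emph{Attaining the bound.} Let $\cT=\sum_{\bi\in L}s_\bi\,\be_{i_1}\otimes\cdots\otimes\be_{i_d}$ with generic $s_\bi$. By the corollary to \Cref{thm:uniqueness-bato}, its bato decomposition is unique and $\batorank(\cT)=|L|=N$. The main point to check is that this corollary applies in the degenerate cases, for example when some index values of the last mode do not occur in $L$. The genericity condition \eqref{eq:genericity-uniqueness} only involves index values that do occur, so the argument still goes through.

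One can also avoid the uniqueness result. The nonzero $s_\bi$ give $N$ distinct nonzero singular values of the mode-$k$ flattenings, which pins down the singular vectors. Then $\cS=(Q_1,\dots,Q_d)\cdot\cT$ can be supported on a partial Latin hyperrectangle only if each $Q_k$ is a signed permutation, up to blocks attached to zero singular values. Such changes of basis cannot reduce the number of nonzero entries below $N$.
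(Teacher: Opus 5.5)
Your proof is correct and follows the paper's argument. It uses the same upper bound from injectivity of the projection that forgets the last coordinate, the same modular construction, and the same appeal to generic uniqueness to get $\batorank(\cT)=|L|$; your shift by $1$ just makes the last index land in $[n_d]$ explicitly, and you verify the Latin condition, which the paper leaves implicit.

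One small slip in your optional last paragraph: the mode-$k$ flattening has at most $n_k$ nonzero singular values, namely $\sqrt{\lambda^{(k)}_a}$ with $\lambda^{(k)}_a=\sum_{\bi\in L,\ i_k=a}s_{\bi}^2$, not $N$ of them. The conclusion of that aside is unaffected.
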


\begin{proof}
    First, we show that the bato rank cannot exceed $\prod_{k=1}^{d-1} n_k$.
    Given a partial Latin hyperrectangle $L \subset [n_1] \times \cdots \times [n_d]$, distinct tuples in $L$ must differ in at least one of the first $d-1$ coordinates, so $|L|  \leq  \left| [n_1] \times \cdots \times [n_{d-1}] \right| = \prod_{k=1}^{d-1} n_k$. Next, we show that this upper bound is achieved.
    Consider the set 
    \[
    L = \left\{ (i_1, \dots, i_d) \in [n_1] \times \cdots \times [n_d]  \mid i_d \equiv \sum\nolimits_{k=1}^{d-1} i_k \pmod{n_d} \right\}.
    \]
    It follows that $|L| = \prod_{k=1}^{d-1}n_k$ and $\mathrm{Hamming}(\bi, \bj) \geq 2$ for all distinct $\bi, \bj \in L$, so the bato rank of $\T = \sum_{\bi \in L} s_{\bi} \be_{i_1} \otimes \cdots \otimes \be_{i_d}$ with generic scalars $s_{\bi}$ is $\prod_{k=1}^{d-1}n_k$, by \Cref{thm:uniqueness-bato}.
\end{proof}

Additivity under direct sums and multiplicativity under tensor/Kronecker products are fundamental questions for tensor rank and play a central role in algebraic complexity theory. Since these identities fail for ordinary tensor rank, it is natural to ask whether the extra structure imposed by batoness leads to better behaviors. We show that, generically, it does.

Let $\cS= \sum_i \bu_i^{(1)} \otimes \cdots \otimes \bu_i^{(d)}\in \R^{m_1} \otimes \cdots \otimes \R^{m_d}$ and~$\cT = \sum_j \bv_j^{(1)} \otimes \cdots \otimes \bv_j^{(d)} \in \R^{n_1} \otimes \cdots \otimes \R^{n_d}$. Their \emph{direct sum} is
\[
\cS \oplus \cT = \sum\nolimits_i \bu_i^{(1)} \otimes \cdots \otimes \bu_i^{(d)} + \sum\nolimits_j \bv_j^{(1)} \otimes \cdots \otimes \bv_j^{(d)} \in (\R^{m_1} \oplus \R^{n_1}) \otimes \cdots \otimes (\R^{m_d} \oplus \R^{n_d}),
\]
which we think of as an element in $\R^{(m_1 + n_1) \times \cdots \times (m_d + n_d)}$. Their \emph{tensor product} is
\[
\cS \otimes \cT = \sum\nolimits_{i,j} \bu_i^{(1)} \otimes \cdots \otimes \bu_i^{(d)} \otimes \bv_j^{(1)} \otimes \cdots \otimes \bv_j^{(d)} \in \R^{m_1} \otimes \cdots \otimes \R^{m_d} \otimes \R^{n_1} \otimes \cdots \otimes \R^{n_d},
\]
which we think of as an element in $\R^{m_1 \times \cdots \times m_d \times n_1 \times \cdots \times n_d}$. Their \emph{Kronecker product} is
\[
\cS \boxtimes \cT = \sum\nolimits_{i,j} (\bu_i^{(1)} \otimes \bv_j^{(1)}) \otimes \cdots \otimes (\bu_i^{(d)} \otimes \bv_j^{(d)}) \in (\R^{m_1} \otimes \R^{n_1}) \otimes \cdots \otimes (\R^{m_d} \otimes \R^{n_d}),
\]
which we think of as an element of $\R^{m_1n_1 \times \cdots \times m_dn_d}$. Notice the difference between Kronecker and tensor products. For example, the Kronecker product of two matrices is another matrix, while the tensor product of two matrices yields an order-four tensor. By definition, one has
\begin{align}
    &\rank(\cS \oplus \cT) \leq \rank(\cS) + \rank(\cT), \label{eq:rank-subadditive} \\
    &\rank(\cS \boxtimes \cT) \leq \rank(\cS \otimes \cT) \leq \rank(\cS) \cdot \rank(\cT). \label{eq:rank-submultiplicative}
\end{align}

When $\cS$ and $\cT$ are matrices, both \eqref{eq:rank-subadditive} and \eqref{eq:rank-submultiplicative} hold with equality. Strassen conjectured in 1973 that \eqref{eq:rank-subadditive} holds with equality for all higher-order tensors~\cite{strassen1973vermeidung}, but his conjecture was disproved in 2019 \cite{shitov2019counterexamples}. The inequalities in \eqref{eq:rank-submultiplicative} can also be strict \cite{christandl2018tensor}. Understanding when these inequalities are satisfied with equality is a central problem in algebraic complexity theory, e.g., related to the complexity of matrix multiplication \cite{Landsberg_2017}.

The known examples for which \eqref{eq:rank-subadditive} or \eqref{eq:rank-submultiplicative} is strict are special.
To our knowledge, it is not known whether \eqref{eq:rank-subadditive} or \eqref{eq:rank-submultiplicative} are generically satisfied with equality. The analogous inequalities for bato rank are equalities generically.

\begin{proposition}\label{prop:bato-rank-additivie-multiplicative}
    Let $d \geq 2$,  $L \in \Latin(m_1, \dots, m_d)$ and $\tilde{L} \in \Latin(n_1, \dots, n_d)$. Then, for a generic pair of tensors $(\cS, \cT ) \in X_{L} \times X_{\tilde L} \subseteq \R^{m_1 \times \cdots \times m_d} \times \R^{n_1 \times \cdots \times n_d}$ one has
    \begin{align*}
        \batorank(\cS \oplus \cT) & = \batorank(\cS) + \batorank(\cT), \\
        \batorank(\cS \otimes \cT) & = \batorank(\cS) \cdot \batorank(\cT), \\
        \batorank(\cS \boxtimes \cT) & = \batorank(\cS) \cdot \batorank(\cT).
    \end{align*}
    The tensor-product statement remains valid when the two tensors have different orders.
\end{proposition}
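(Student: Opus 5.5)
The plan is to show in each case that the combined tensor is $L'$-bato for an explicit partial Latin hyperrectangle $L'$ built from $L$ and $\tilde L$ with $|L'|$ equal to $|L|+|\tilde L|$ or $|L|\cdot|\tilde L|$, and that its coefficients are generic in the sense of \eqref{eq:genericity-uniqueness}. Then the Corollary following \Cref{thm:uniqueness-bato} gives bato rank exactly $|L'|$. That same corollary, applied to generic $\cS\in X_L$ and $\cT\in X_{\tilde L}$, also gives $\batorank(\cS)=|L|$ and $\batorank(\cT)=|\tilde L|$.

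Write $\cS=\sum_{\bi\in L}s_\bi \bu^{(1)}_{i_1}\otimes\cdots\otimes\bu^{(d)}_{i_d}$ and $\cT=\sum_{\bj\in\tilde L}t_\bj \bv^{(1)}_{j_1}\otimes\cdots\otimes\bv^{(d)}_{j_d}$, where the bases are orthonormal and the scalars $s_\bi,t_\bj$ are generic.

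\textbf{Direct sum.} Take the orthonormal basis of $\R^{m_k}\oplus\R^{n_k}$ given by the $\bu^{(k)}$'s followed by the $\bv^{(k)}$'s, and let $L'=L\sqcup(\tilde L+(m_1,\dots,m_d))$. Two tuples from the same block are at Hamming distance at least $2$ by assumption. Two tuples from different blocks differ in all $d\ge 2$ coordinates. So $L'$ is a partial Latin hyperrectangle and $\cS\oplus\cT\in X_{L'}$.

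\textbf{Tensor product.} Keep the $2d$ bases and set $L'=L\times\tilde L$. If $(\bi,\bj)\neq(\bi',\bj')$, then either $\bi\ne\bi'$ or $\bj\ne\bj'$, which already gives Hamming distance at least $2$. The coefficients are $s_\bi t_\bj$. This works verbatim when the two tensors have different orders.

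\textbf{Kronecker product.} Use the product bases $\{\bu^{(k)}_a\otimes\bv^{(k)}_b\}$ of $\R^{m_k}\otimes\R^{n_k}$, and index by pairs $(a,b)\in[m_k]\times[n_k]$. Let $L'=\{((i_1,j_1),\dots,(i_d,j_d)) : \bi\in L,\ \bj\in\tilde L\}$. If $\bi\neq\bi'$, the tuples differ in at least two of the paired coordinates because $\bi,\bi'$ already do; the same holds when $\bj\ne\bj'$. So again $L'$ is a partial Latin hyperrectangle, $|L'|=|L||\tilde L|$, and the coefficients are $s_\bi t_\bj$.

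\textbf{Genericity (main obstacle).} Condition \eqref{eq:genericity-uniqueness} is about sums of squares over slices, and the new coefficients are not independent generic scalars, since the product coefficients $s_\bi t_\bj$ are constrained. So I will check the inequalities directly.

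\emph{Direct sum.} The slice sums of $L'$ are exactly the slice sums of $L$ and of $\tilde L$. These are nonzero polynomials in disjoint variables, so any two distinct nonempty slices give distinct polynomials and generically different values.

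\emph{Kronecker product.} The mode-$k$ slice sum at $(a,b)$ factors as $\sigma_a\tau_b$, where
\[
\sigma_a=\sum_{\bi\in L,\,i_k=a}s_\bi^2,\qquad \tau_b=\sum_{\bj\in\tilde L,\,j_k=b}t_\bj^2 .
\]
For distinct pairs $(a,b)\ne(a',b')$ with nonempty slices, $\sigma_a\tau_b-\sigma_{a'}\tau_{b'}$ is a nonzero polynomial: if $a\ne a'$, the polynomials $\sigma_a$ and $\sigma_{a'}$ involve disjoint sets of variables, so the difference cannot vanish identically. Hence the inequalities hold off a proper algebraic subset.

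\emph{Tensor product.} Each mode belongs to only one factor, so the slice sum in a mode of $\cS$ at index $a$ is $\sigma_a\cdot\|\cT\|^2$. Distinctness of these values reduces to distinctness of the $\sigma_a$, which holds generically.

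\textbf{Conclusion.} Finitely many nonzero polynomial conditions in $(s,t)$ define a dense open set of parameters. The map $(U,s)\mapsto\cS$ parametrizes $X_L$, and likewise $(V,t)\mapsto\cT$ parametrizes $X_{\tilde L}$. So the condition holds for generic pairs in $X_L\times X_{\tilde L}$. Each combined decomposition is therefore the unique bato decomposition, and the bato rank equals $|L'|$, which proves the three identities.
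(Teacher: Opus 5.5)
Your proposal is correct and matches the paper's proof: it uses the same supports $L\sqcup\tilde L$, $L\times\tilde L$ and $L\boxtimes\tilde L$, then invokes the genericity conditions \eqref{eq:genericity-uniqueness} and the uniqueness corollary to read off the bato rank as $|L'|$. Your explicit check of those conditions, via the slice-sum factorizations $\sigma_a\tau_b$ and $\sigma_a\|\cT\|^2$, fills in a step the paper only asserts; you should also note that the product coefficients $s_\bi t_\bj$ are generically nonzero, so the unique decomposition really has $|L'|$ terms.
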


\begin{proof}
    First, we show that these operations preserve the property of being bato.
    The direct sum of bato tensors is bato: $\cS \oplus \cT \in X_{L \sqcup \tilde{L}}$ where $L \sqcup \tilde L$ can be viewed as an element in $\Latin(m_1 + n_1, \dots, m_d + n_d)$.
    The tensor product of bato tensors is bato: $\cS \otimes \cT \in X_{L \times \tilde L}$ with $L \times \tilde L \in \Latin(m_1, \dots, m_d, n_1, \dots, n_d)$.
    The Kronecker product of bato tensors is bato: $\cS \boxtimes \cT \in X_{L \boxtimes \tilde L}$ where $L \boxtimes \tilde L = \{((i_1, j_1), \dots, (i_d, j_d)) \mid \bi \in L, \bj \in \tilde L\}$ can be viewed as an element in $\Latin(m_1n_1, \dots, m_dn_d)$ by identifying $[m_k] \times [n_k]$ with $[m_kn_k]$.
    
    Since the pair $(\cS, \cT)$ is generic, we deduce that $\cS \oplus \cT$, $\cS \otimes \cT$ and $\cS \boxtimes \cT$ satisfy the genericity conditions from \Cref{thm:uniqueness-bato} given in \eqref{eq:genericity-uniqueness}, so they have a unique bato decomposition. Hence,
    \begin{align*}
         & \batorank(\cS \oplus \cT) = |L \sqcup \tilde L| = |L| + |\tilde L| = \batorank(\cS) + \batorank(\cT), \\
         &  \batorank(\cS \otimes \cT) = |L \times \tilde L| = |L| \cdot |\tilde L| = \batorank(\cS) \cdot \batorank(\cT),\\
         & \batorank(\cS \boxtimes \cT) = |L \boxtimes \tilde L| = |L| \cdot |\tilde L| = \batorank(\cS) \cdot \batorank(\cT). \qedhere
    \end{align*}
\end{proof}

In light of \Cref{prop:bato-rank-additivie-multiplicative} and the fact that these identities always hold for matrices, we suspect that the bato rank always behaves additively and multiplicatively under taking direct sums and tensor products, respectively.

\begin{conjecture}
    $\batorank(\cS \oplus \cT) = \batorank(\cS) + \batorank(\cT)$ for all bato $\cS$ and $\cT$ of order $d \geq 2$.
\end{conjecture}

\begin{conjecture} \label{conj:kronecker-product-bato-rank}
    $\batorank(\cS \otimes \cT) = \batorank(\cS) \cdot \batorank(\cT)$ for all bato $\cS$ and $\cT$.
\end{conjecture}

However, bato rank is not always multiplicative under taking Kronecker products.

\begin{example} \label{ex:kronecker-submultiplicative}
    Let $\cT = \be_1 \otimes \be_1 \otimes \be_1 + \be_1 \otimes \be_2 \otimes \be_2 + \be_2 \otimes \be_1 \otimes \be_2 - \be_2 \otimes \be_2 \otimes \be_1 \in (\R^2)^{\otimes 3}$, which has bato rank 4 (e.g., by \Cref{prop:bato-rank-slices}). Define
    \begin{align*}
        & \bu_1 = \frac{1}{\sqrt{2}}(\be_1 \otimes \be_1 - \be_2 \otimes \be_2), \quad  \bu_2 = \frac{1}{\sqrt{2}}(\be_1 \otimes \be_2 + \be_2 \otimes \be_1), \\
        & \bu_3 = \frac{1}{\sqrt{2}}(\be_1 \otimes \be_1 + \be_2 \otimes \be_2), \quad \bu_4 = \frac{1}{\sqrt{2}}(\be_1 \otimes \be_2 - \be_2 \otimes \be_1).
    \end{align*}
    Then, $\bu_1, \bu_2, \bu_3, \bu_4 \in \R^2 \otimes \R^2$ are orthonormal and
    \begin{align*}
      \cT \boxtimes \cT =   \sqrt2\bigl(&
\bu_1\otimes \bu_1\otimes \bu_1
+\bu_1\otimes \bu_2\otimes \bu_2
+\bu_2\otimes \bu_1\otimes \bu_2
-\bu_2\otimes \bu_2\otimes \bu_1\\
+
&\bu_3\otimes \bu_3\otimes \bu_3
+\bu_3\otimes \bu_4\otimes \bu_4
+\bu_4\otimes \bu_3\otimes \bu_4
-\bu_4\otimes \bu_4\otimes \bu_3
\bigr).
    \end{align*}
    Hence, $\batorank(\cT \boxtimes \cT) \leq 8 < 16 = \batorank(\cT)^2$.
\end{example}

\begin{remark}
    The tensor $\cT$ in \Cref{ex:kronecker-submultiplicative} is the structure tensor of the real algebra of complex numbers $\C$, and $\cT \boxtimes \cT$ is the structure tensor of the bicomplex numbers $\C \otimes_\R \C$; see \Cref{sec:world} for more details.
\end{remark}

We conclude this subsection by relating the bato rank with tensor norms and the ranks of slices. These results will be useful later in \Cref{sec:world}. Recall that the Frobenius norm of
 a tensor $\cT \in \R^{n_1 \times \cdots \times n_d}$ is $\| \cT\| = \sqrt{\sum_{\bi} t_{\bi}^2}$, and its spectral norm is
\[
\| \cT \|_\sigma = \max_{(\bu^{(1)}, \dots, \bu^{(d)}) \in \bbS^{n_1-1} \times \cdots \times \bbS^{n_d-1}} |\cT(\bu^{(1)}, \dots, \bu^{(d)})|,
\]
which coincides with the largest singular value of $\cT$ in absolute value \cite{lim2005singular}.
\begin{proposition} \label{prop:bato-rank-lower-bound}
    For every bato tensor $\cT \neq 0$, 
    \[
    \batorank(\cT) \geq \frac{\| \cT\|^2}{ \| \cT\|_\sigma^2}.
    \]
\end{proposition}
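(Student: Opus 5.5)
Take a bato decomposition of $\cT$ with the minimal number of summands $r = \batorank(\cT)$, and use \Cref{cor:bato-support} to write it as $\cT = \sum_{\bi \in L} s_\bi \bu_{i_1}^{(1)} \otimes \cdots \otimes \bu_{i_d}^{(d)}$. Here $|L| = r$, each $\{\bu_j^{(k)}\}_j$ is an orthonormal basis of $\R^{n_k}$, and we may assume $s_\bi \neq 0$ for all $\bi \in L$. If some $s_\bi$ vanished we could drop it and shorten the decomposition, so minimality rules this out; in any case, allowing zero coefficients only helps the inequality. The plan is to bound $\|\cT\|^2$ from above by $r \max_\bi s_\bi^2$ and to bound $\|\cT\|_\sigma$ from below by $\max_\bi |s_\bi|$.

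For the Frobenius norm, the rank-one tensors $\bu_{i_1}^{(1)} \otimes \cdots \otimes \bu_{i_d}^{(d)}$ with $\bi \in L$ are orthonormal in $\R^{n_1}\otimes\cdots\otimes\R^{n_d}$. Indeed, by the product formula for the inner product, two distinct indices give inner product $\prod_k \langle \bu_{i_k}^{(k)}, \bu_{j_k}^{(k)}\rangle = 0$, because they differ in at least one coordinate. Therefore $\|\cT\|^2 = \sum_{\bi \in L} s_\bi^2 \leq r \max_{\bi \in L} s_\bi^2$. This step uses only orthogonality in one factor, not two.

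For the spectral norm, fix any $\bi \in L$ and evaluate $\cT$ at the unit vectors $(\bu_{i_1}^{(1)}, \dots, \bu_{i_d}^{(d)})$. The result is exactly $s_\bi$, since $\cT(\bu_{i_1}^{(1)}, \dots, \bu_{i_d}^{(d)}) = \sum_{\bj\in L} s_\bj \prod_k \langle \bu_{j_k}^{(k)}, \bu_{i_k}^{(k)}\rangle = s_\bi$; this is the same identity used in the proof of \Cref{thm:uniqueness-bato}. By the definition of the spectral norm as a maximum over unit vectors, $\|\cT\|_\sigma \geq |s_\bi|$ for every $\bi\in L$. Alternatively, \Cref{thm:bato-tucker} says each summand is a critical rank-one approximation with singular value $s_\bi$, and $\|\cT\|_\sigma$ is the largest singular value in absolute value.

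Combining the two bounds gives $\|\cT\|^2 \leq r \max_\bi s_\bi^2 \leq r\,\|\cT\|_\sigma^2$. Since $\cT \neq 0$, we have $\|\cT\|_\sigma > 0$, and dividing yields the claim. There is no real obstacle. The only points needing care are that the minimal bato decomposition has exactly $r$ nonzero terms, and that the Tucker form of \Cref{cor:bato-support} gives the full orthonormality needed for both computations.
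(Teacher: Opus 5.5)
Your proof is correct and matches the paper's argument: write a bato decomposition in the aligned form of \Cref{cor:bato-support}, use orthonormality of the terms to get $\|\cT\|^2=\sum_{\bi\in L}s_\bi^2$, and evaluate $\cT$ at $(\bu_{i_1}^{(1)},\dots,\bu_{i_d}^{(d)})$ to get $|s_\bi|\le\|\cT\|_\sigma$. You are also right that both steps only need the index tuples in $L$ to be distinct with respect to orthonormal bases, so two-orthogonality is not needed here.
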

\begin{proof}
   Consider a bato decomposition $\cT = \sum_{\bi \in L} s_{\bi} \bu_{i_1}^{(1)} \otimes \cdots \otimes \bu_{i_d}^{(d)}$ as in \Cref{cor:bato-support}. 
   Two-orthogonality implies that $\| \cT \|^2 = \sum_{\bi \in L} s_{\bi}^2$ and $|\cT(\bu_{i_1}^{(1)}, \dots, \bu_{i_d}^{(d)})| = |s_{\bi}|$ for all $\bi \in L$. Therefore, $|s_{\bi}| \leq \| \cT\|_\sigma$ for all $\bi \in L$, so $\| \cT\|^2 = \sum_{\bi \in L}s_\bi^2 \leq |L| \| \cT\|_\sigma^2$.
\end{proof}

\begin{proposition}[Matrix-rank formula for bato slices]
\label{prop:bato-rank-slices}
Let $\T\in \mathbb R^{n_1}\otimes \mathbb R^{n_2}\otimes \mathbb R^{n_3}$
have a bato decomposition
$\T=\sum_{(i,j,k)\in L} s_{ijk} \bu_i\otimes\bv_j\otimes\bw_k,$
where \(L\in\Latin(n_1,n_2,n_3)\), all \(s_{ijk}\neq0\), and
\(\{\bu_i\}\), \(\{\bv_j\}\), and \(\{\bw_k\}\) are orthonormal bases. Then
\[
\begin{aligned}
    |L|=
    \sum_{i=1}^{n_1}
    \operatorname{rank}
    \T(\bu_i,\mathord\cdot,\mathord\cdot)=
    \sum_{j=1}^{n_2}
    \operatorname{rank}
    \T(\mathord\cdot,\bv_j,\mathord\cdot)=
    \sum_{k=1}^{n_3}
    \operatorname{rank}
    \T(\mathord\cdot,\mathord\cdot,\bw_k),
\end{aligned}
\]
where each contraction is regarded as a matrix in the remaining two factors.
\end{proposition}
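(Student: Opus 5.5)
Fix $i \in [n_1]$ and contract the bato decomposition with $\bu_i$. Orthonormality of $\{\bu_a\}$ kills all terms with first index $a \neq i$, so
\[
\T(\bu_i,\mathord\cdot,\mathord\cdot) = \sum_{(j,k) \,:\, (i,j,k)\in L} s_{ijk}\, \bv_j \otimes \bw_k .
\]
The plan is to show that this matrix has rank exactly $|\{(j,k) : (i,j,k) \in L\}|$, i.e.\ the number of elements of $L$ with first coordinate $i$. Summing over $i$ then partitions $L$ by its first coordinate and gives $|L|$.

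To compute the rank, write the slice in the bases $\{\bv_j\}$ and $\{\bw_k\}$. It becomes $V S_i W^\top$, where $V$ and $W$ are orthogonal and $S_i \in \R^{n_2\times n_3}$ has entries $(S_i)_{jk} = s_{ijk}$ if $(i,j,k) \in L$ and $0$ otherwise. So its rank equals $\rank S_i$.

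The Latin condition is the key point. Two distinct elements $(i,j,k),(i,j',k') \in L$ share the first coordinate, so they must differ in both remaining coordinates: $j \neq j'$ and $k \neq k'$. Hence $S_i$ has at most one nonzero entry in each row and each column. Since all $s_{ijk} \neq 0$, $S_i$ is a partial permutation matrix scaled by nonzero scalars. Its rank is therefore the number of its nonzero entries, because after permuting rows and columns it is diagonal with that many nonzero diagonal entries. This yields the first equality.

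The other two equalities follow by the identical argument with the roles of the factors permuted, since the Hamming condition is symmetric in the coordinates. I expect no real obstacle. The only point needing care is that the hypothesis $s_{ijk} \neq 0$ on $L$ is used exactly where we count nonzero entries. Orthonormality of the bases is what makes the contraction isolate a single slice of the core and turns the rank into that of $S_i$.
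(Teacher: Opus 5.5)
Your proof is correct and follows the paper's argument essentially step for step. Both contract with $\bu_i$, use the Latin condition to get at most one nonzero entry per row and column of the slice in the bases $\{\bv_j\}$ and $\{\bw_k\}$, read off its rank as the number of elements of $L$ with first coordinate $i$, and sum over $i$; you are slightly more explicit than the paper about where the hypothesis $s_{ijk}\neq 0$ is used.
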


\begin{proof}
Fix \(i\in[n_1]\). Contracting in the first factor gives
\[
    \T(\bu_i,\mathord\cdot,\mathord\cdot)
    =
    \sum_{\substack{(i,j,k)\in L}}
    s_{ijk}\,\bv_j\otimes\bw_k.
\]
If \((i,j,k),(i,j',k')\in L\) are distinct, then the Latin condition
implies \(j\neq j'\) and \(k\neq k'\). Hence, the matrix $\T(\bu_i,\mathord\cdot,\mathord\cdot)$ in
the bases \(\{\bv_j\}\) and \(\{\bw_k\}\) has at most one nonzero entry in each
row and column, so its rank is $\left|\{(i,j,k)\in L\}\right|$.
Summing over \(i\) gives the desired result.
The other two identities follow by the same argument.
\end{proof}

\subsection{Truncations of bato decompositions} \label{sec:truncations}

This section is motivated by the following question. Given a rank-$r$ tensor $\cT = \sum_{i=1}^r \bu^{(1)}_i \otimes \cdots \otimes \bu_i^{(d)} \in \R^{n_1} \otimes \cdots \otimes \R^{n_d}$, when is the truncation $\tilde \cT = \sum_{i=1}^k \bu^{(1)}_i \otimes \cdots \otimes \bu_i^{(d)}$ a best rank-$k$ approximation of $\cT$?

Two-orthogonality is necessary but insufficient for this to occur \cite{vannieuwenhoven2014generic}. If $\cT$ is not two-orthogonal, best rank-$k$ and rank-$\tilde k$ approximations of $\cT$ need not share any rank-one terms in their minimal CP decompositions. This poses a problem in applications when one computes a low-rank approximation of a data tensor and analyzes the rank-one terms obtained.

We argue that truncations of bato decompositions have desirable properties, as opposed to truncations of general CP decompositions.
Given $L \in \Latin(n_1, \dots, n_d)$ and a bato decomposition $\cT = \sum_{\bi \in L} s_\bi \bu_{i_1}^{(1)} \otimes \cdots \otimes \bu_{i_d}^{(d)}$, a truncation of this decomposition is of the form $\tilde \cT = \sum_{\bi \in {\tilde L}} s_\bi \bu_{i_1}^{(1)} \otimes \cdots \otimes \bu_{i_d}^{(d)}$ for $\tilde{L} \subset L$. Truncations of bato decompositions are bato.
If the bato rank of $\cT$ is $|L|$, then the truncation $\tilde \cT$ gives an approximation of $\cT$ of lower bato rank. Given a budget $k < |L|$, it is natural to truncate $\cT$ taking $\tilde L \subset L$ with $|\tilde L| = k$ such that $|s_{\bi}| \geq |s_{\bj}|$ for all $\bi \in \tilde{L}$ and all $\bj \in L \setminus \tilde L$, so that $\| \cT - \tilde \cT\|^2 = \sum_{\bi \in L \setminus \tilde L} s_{\bi}^2$.

A \emph{best $\tilde{L}$-bato approximation} of $\cT$ is a solution to the optimization problem
\begin{equation} \label{eq:optimization-problem}
    \min_{\tilde \cT \in X_{\tilde L}} \| \cT - \tilde \cT \|^2,
\end{equation}
which always exists because $X_{\tilde L}$ is Euclidean closed (\Cref{prop:closed-semialgebraic}). A \emph{critical $\tilde L$-bato approximation} of $\cT$ is a critical point of \eqref{eq:optimization-problem}, i.e., a smooth point $\tilde \cT \in X_{\tilde L}$ such that $\cT - \tilde \cT$ is orthogonal to the tangent space of $X_{\tilde L}$ at $\tilde \cT$ (see \cite{draisma2016euclidean} for details). We show that truncating a generic $L$-bato tensor with $\tilde L \subset L$ gives a critical $\tilde L$-bato approximation.

\begin{example}[Matrix low-rank approximation]
     Let $L = \{ (i, i) \mid i \in [r]\} \in \Latin(m, n)$ and consider the SVD of a matrix $\cT = \sum_{i \in [r]} s_{ii} \bu_i \otimes \bv_i \in \R^{m \times n}$ with $s_{11} > \cdots > s_{rr} > 0$. Take $\tilde L = \{ (i, i) \mid i \in [\tilde r]\}$ for $\tilde r < r$. Then, the solution to \eqref{eq:optimization-problem} is given by the truncation $\tilde \cT = \sum_{i \in [\tilde r]} s_{i,i} \bu_i \otimes \bv_i$, and the other critical points of \eqref{eq:optimization-problem} are given by all truncations of $\cT$ with $\tilde r$ summands \cite{eckart1936approximation}.
\end{example}

For a positive integer $n$, let
$\mathfrak{so}(n) = \{ \Omega \in \R^{n \times n} \mid \Omega + \Omega^\top = 0\}$
be the Lie algebra of $\rSO(n)$, that is, the tangent space of $\rSO(n)$ at the identity $I_n$. Given $\cT \in \R^{n_1 \times \cdots \times n_d}$ and $k \in [d]$, we use the notation
\[
\mathfrak{so}(n_k)\cdot \cT := \left\{(I_{n_1}, \dots, I_{n_{k-1}}, \Omega, I_{n_{k+1}}, \dots, I_{n_d}) \cdot \cT \mid \Omega \in \mathfrak{so}(n_k) \right\}. 
\]

\begin{lemma}[Tangent space to bato variety]
\label{lem:tangent-fixed-support-bato}
Let $L \in \Latin(n_1, \dots, n_d)$ 
and consider a generic $L$-bato tensor  
$\cT = \sum_{\bi \in L} s_{\bi}\,
    \bu^{(1)}_{i_1}\otimes\cdots\otimes \bu^{(d)}_{i_d} \in X_L \subset \R^{n_1 \times \cdots \times n_d}$.
Then, $\cT$ is a smooth point of $X_L$ and the tangent space of~$X_L$ at $\cT$ is
    \[
    T_{\cT}X_L = \operatorname{span}
    \left\{
    \bu^{(1)}_{i_1}\otimes\cdots\otimes\bu^{(d)}_{i_d}
    \mid 
    (i_1,\ldots,i_d)\in L
    \right\} + \sum_{k=1}^d \mathfrak{so}(n_k)\cdot \cT.
    \]
\end{lemma}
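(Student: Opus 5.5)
We view $X_L$ as the image of the polynomial parametrization
\[
\Phi : \rO(n_1) \times \cdots \times \rO(n_d) \times \R^{L} \to \R^{n_1 \times \cdots \times n_d}, \qquad (U^{(1)}, \dots, U^{(d)}, \cS) \mapsto (U^{(1)}, \dots, U^{(d)}) \cdot \cS,
\]
where $\R^L$ denotes tensors supported on $L$. The plan is to identify the image of the differential of $\Phi$ with the displayed subspace. We then show that, at a generic point, this image has constant (maximal) dimension and that $\Phi$ is locally a submersion onto a manifold near the fiber. This gives smoothness together with the tangent space formula.

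\textbf{Step 1: the differential.} By equivariance we may translate to $U^{(k)} = I$ and write $\cT$ via its core $\cS$ in the bases $\bu^{(k)}_j$. Differentiate along a curve $(e^{t\Omega_1}U^{(1)}, \dots, e^{t\Omega_d}U^{(d)}, \cS + t\dot\cS)$. The derivative at $t=0$ is
\[
\sum_{\bi \in L} \dot s_\bi\, \bu^{(1)}_{i_1}\otimes\cdots\otimes\bu^{(d)}_{i_d} + \sum_{k} (I,\dots,\Omega_k,\dots,I)\cdot\cT .
\]
Here $\Omega_k$ ranges over $\mathfrak{so}(n_k)$, since the tangent space of $\rO(n_k)$ at $U$ is $\mathfrak{so}(n_k)U$. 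The image of $d\Phi$ at any point of the fiber over $\cT$ is therefore exactly the stated span. This is the easy part.

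\textbf{Step 2: the fiber and the rank.} By the proof of \Cref{thm:uniqueness-bato}, for generic $s_\bi$ (nonzero, with the distinct slice norms of \eqref{eq:genericity-uniqueness}), the bato decomposition of $\cT$ is unique. Consequently the fiber $\Phi^{-1}(\cT)$ consists of finitely many signed permutations, together with the freedom to rotate basis vectors that lie in the kernel directions, i.e.\ indices $j$ with $\{\bi\in L: i_k=j\}=\varnothing$. That freedom acts by a product of orthogonal groups $\rO(z_k)$ on those unused indices, combined with discrete symmetries. Its dimension is locally constant on a Zariski open set of parameters. The rank of $d\Phi$ is then lower semicontinuous and attains its generic (maximal) value on a dense open set. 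By the constant rank theorem, near a generic parameter point the image of $\Phi$ restricted to a neighborhood is an embedded submanifold whose tangent space is $\operatorname{im} d\Phi$.

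\textbf{Step 3: from local image to $X_L$ near $\cT$.} The main obstacle is to ensure that no other piece of $X_L$ accumulates at $\cT$, i.e.\ that every point of $X_L$ near $\cT$ comes from parameters near the fiber. I would argue via properness: $\rO(n_k)$ is compact, and the core norm equals $\|\cT\|$, so $\Phi$ is proper. If $\Phi(p_m) \to \cT$, then a subsequence $p_m \to p$ with $\Phi(p)=\cT$, so $p$ lies in the fiber. Since the fiber is a finite union of orbits of the stabilizer described in Step 2, and $\Phi$ has constant rank along it with all these orbits mapping to the same local submanifold (related by the group action), $X_L$ coincides near $\cT$ with a single embedded manifold. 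Hence $\cT$ is a smooth point of $X_L$, and $T_\cT X_L = \operatorname{im} d\Phi$, which is the claimed subspace. The delicate point is checking that the constant rank holds at every fiber point, not only one. I expect to handle this by the equivariance $\Phi(g\cdot p)=\Phi(p)$ under the stabilizer, which transports the rank along the fiber.
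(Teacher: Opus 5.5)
Your proposal is correct, and your Step~1 is exactly the paper's computation. The paper parametrizes $X_L$ by $\Phi$ on $\rSO(n_1)\times\cdots\times\rSO(n_d)\times V_L(\bu)$ and differentiates at $(I_{n_1},\dots,I_{n_d},\cT)$. It then simply asserts that $\cT$ is a smooth point ``for being generic'' with $T_\cT X_L=\operatorname{im} d\Phi$, implicitly appealing to generic smoothness of images of polynomial maps.

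You instead justify that last step with a self-contained local argument:
\begin{enumerate}[label=(\roman*)]
\item uniqueness of the bato decomposition (\Cref{thm:uniqueness-bato}) pins down the fiber $\Phi^{-1}(\cT)$;
\item properness of $\Phi$ (compactness of the orthogonal groups and $\|\cS\|=\|\cT\|$) forces every point of $X_L$ near $\cT$ to come from a neighborhood of that fiber;
\item the constant rank theorem at a maximal-rank point makes the local image an embedded submanifold.
\end{enumerate}
This gives a bit more than the paper states: $X_L$ itself, not just its Zariski closure, is an embedded manifold near $\cT$, and $\operatorname{im} d\Phi$ is the same subspace at every preimage of $\cT$. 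The cost is invoking uniqueness, which the paper's route does not need; the paper instead leaves the real semialgebraic subtleties to standard facts.

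To make your Step~3 airtight, name the group that acts transitively on the fiber. Take the stabilizer $H_L$ of $V_L$ in $\rO(n_1)\times\cdots\times\rO(n_d)$. It consists of signed permutation matrices whose underlying permutations map $L$ to itself, combined with arbitrary orthogonal transformations of the coordinates indexed by $[n_k]\setminus\pi_k(L)$. It acts on parameters by $(U,\cS)\mapsto(UQ^{-1},Q\cdot\cS)$ and leaves $\Phi$ invariant. Uniqueness shows that the fiber is a single $H_L$-orbit, so $\Phi(hW)=\Phi(W)$ for a neighborhood $W$ of one fiber point and all $h\in H_L$. This is what rules out two distinct branches through $\cT$ with the same tangent space; equal rank and equal image of $d\Phi$ along the fiber would not exclude that on their own.
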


\begin{proof}
Let $V_L(\bu) = \operatorname{span}
    \{
    \bu^{(1)}_{i_1}\otimes\cdots\otimes\bu^{(d)}_{i_d}
    \mid 
    (i_1,\ldots,i_d)\in L
    \}$ and consider the parametrization of $L$-bato tensors
    \[
    \begin{array}{cccc}
         \Phi : & \rSO(n_1) \times \cdots \times \rSO(n_d) \times V_L(\bu) & \to & X_L \subset \R^{n_1 \times \cdots \times n_d}, \\
         & (Q^{(1)}, \dots, Q^{(d)}, \cS) &  \mapsto & (Q^{(1)}, \dots, Q^{(d)}) \cdot \cS.
    \end{array}
    \]
    A tangent vector to \(\rSO(n_k)\) at the identity is an
element \( \Omega^{(k)} \in \mathfrak{so}(n_k)\), and a tangent vector to \(V_L(\bu)\) at $\cT$ is an
arbitrary element \( \dot\cS \in V_L(\bu)\), so the differential of $\Phi$ at
\((I_{n_1},\ldots,I_{n_d},\cT)\)~is
\[
d\Phi_{(I_{n_1},\ldots,I_{n_d},\cT)} (\Omega^{(1)}, \dots, \Omega^{(d)}, \dot \cS) = \dot \cS + \sum_{k=1}^d (I_{n_1}, \dots, I_{n_{k-1}}, \Omega^{(k)}, I_{n_{k+1}}, \dots, I_{n_d}) \cdot \cT.
\]
Hence, $\cT$ is a smooth point of $X_L$ for being generic, and the tangent space of $X_L$ at $\cT$ is
\[
    T_{\cT} X_L = \operatorname{im} d\Phi_{(I_{n_1},\ldots,I_{n_d},\cT)}
    =
    V_L(\bu)
    +
    \sum_{k=1}^d\mathfrak{so}(n_k)\cdot\cT. \qedhere
\]
\end{proof}

\begin{proof}[Proof of \Cref{thm:truncation-bato-critical}]
    Fix $L \in \Latin(n_1, \dots, n_d)$ and consider a generic $L$-bato decomposition $\cT = \sum_{\bi \in L} s_{\bi} \bu_{i_1}^{(1)} \otimes \cdots \otimes \bu_{i_d}^{(d)} \in X_L \subset \R^{n_1 \times \cdots \times n_d}$. We want to show that, for any $\tilde L \subsetneq L$, the truncation $\tilde \cT = \sum_{\bi \in \tilde L} s_{\bi} \bu_{i_1}^{(1)} \otimes \cdots \otimes \bu_{i_d}^{(d)}$ is a critical $\tilde L$-bato approximation of $\cT$.

    Write $\cU_{\bi} = \bu^{(1)}_{i_1}\otimes\cdots\otimes \bu^{(d)}_{i_d}$ for each $\bi \in L$. Then, $\cT - \tilde \cT =\sum_{\bi\in L\setminus \tilde L}s_{\bi} \cU_{\bi}$ is orthogonal to the tangent space of $X_{\tilde L}$ at $\tilde \cT$ given in \Cref{lem:tangent-fixed-support-bato} by two-orthogonality. Indeed, every rank-one summand $s_{\bi}\,  \cU_{\bi}$ for $\bi \in L \setminus \tilde L$ is orthogonal to the linear spaces $\operatorname{span}\{\cU_{\bi} \mid \bi \in \tilde L\}$ and~$\mathfrak{so}(n_k) \cdot \tilde \cT$ for all $k \in [d]$:
    \[
    \langle \cU_{\bi}, (I_{n_1}, \dots, I_{n_{k-1}}, \Omega, I_{n_{k+1}}, \dots, I_{n_d}) \cdot \cU_{\bj}\rangle = \langle \bu_{i_k}^{(k)}, \Omega \bu_{j_k}^{(k)} \rangle \prod_{\ell \neq k} \langle \bu_{i_\ell}^{(\ell)}, \bu_{j_\ell}^{(\ell)} \rangle = 0 \qquad \text{if } \bi \neq \bj,
    \]
    so $\cT - \tilde \cT$ is orthogonal to the sum of these linear spaces.
    
    Let $r = |L|$ and $\tilde r = |\tilde L|$. By genericity of $\cT$ and \Cref{thm:uniqueness-bato}, $\tilde \cT$ has a unique bato decomposition and is a smooth point in the set of tensors of bato rank at most $\tilde r$, so it is a critical bato-rank-$\tilde r$ approximation of $\cT$.
\end{proof}

Unfortunately, bato truncations are not always optimal.

\begin{example}
    Take $\cT = \lambda e_1^{\otimes 3} + \mu(e_1 \otimes e_2 \otimes e_2 + e_2 \otimes e_1 \otimes e_2 + e_2 \otimes e_2 \otimes e_1) \in (\R^{2})^{\otimes 3}$ with~$0 < \lambda < \mu$, which has bato-rank four. 
    Consider the rank-one tensor $\cS = \frac{\mu}{2}(e_1 + e_2)^{\otimes 3}$. Then
    \(
    \| \cT - \cS\|^2 = \lambda^2 - \lambda \mu + 2\mu^2 < \lambda^2 + 2\mu^2 < 3\mu^2,
    \)
    so a best rank-one approximation of $\cT$ is not given by any of the rank-one terms in its bato decomposition.
\end{example}

Sufficient conditions for truncations of two-orthogonal decompositions to give optimal low-rank approximations 
are given by \cite{vannieuwenhoven2014generic}, but these conditions are not necessary \cite{ribot2026decomposing}.
Our setup is different because we consider low-bato-rank approximations instead of low-rank approximations, but those sufficient conditions also apply to our case. Characterizing which bato decompositions lead to optimal truncations (with respect to bato tensors) is an interesting direction for future work.

\begin{question}
    Let $L \in \Latin(n_1, \dots, n_d)$ with $|L| = r$ and let $\cT = \sum_{\bi \in L} s_{\bi} \be_{i_1} \otimes \cdots \otimes \be_{i_d}$. Consider a chain of partial Latin hyperrectangles
    \[
    L_1 \subset L_2 \subset \cdots \subset L_r = L
    \]
    with $|L_k| = k$.
    For which values $\{s_{\bi} \mid \bi \in L \}$ is the truncation $\tilde \cT_k = \sum_{\bi \in L_k} s_{\bi} \be_{i_1} \otimes \cdots \otimes \be_{i_d}$ a best $L_k$-bato approximation of $\cT$ for each $k \in [r]$?
\end{question}

\section{The bato variety} \label{sec:bato-variety}

In this section, we study the set of bato tensors in $\R^{n_1} \otimes \cdots \otimes \R^{n_d}$ and the algebraic variety they define. Given $ L \in \Latin(n_1, \dots, n_d)$, consider the linear space
	\[
	V_L = \{  \cS \in \R^{n_1} \otimes \cdots \otimes \R^{n_d} \mid s_{\bi} = 0 \text{ for all } \bi \not \in L\},
	\]
    so that the set of $L$-bato tensors is its orbit under the action of the group $\rO(n_1) \times \cdots \times \rO(n_d)$:
\[
X_L = \left(\rO(n_1) \times \cdots \times \rO(n_d) \right) \cdot V_L.
\]
The \emph{bato variety} is
\[
\overline{X} = \bigcup_{L \in \Latin(n_1, \dots, n_d)} \overline{X_L}.
\]
Throughout the paper, overlines denote closure with respect to the Zariski topology.

\begin{example}
    The bato variety $\overline{X} \subset (\R^{2})^{\otimes 3}$ has two irreducible components: one of dimension $5$ and one of dimension $7$. They are given by the maximal partial Latin squares
\begin{center}
$L_1 = $
\begin{tabular}{|c|c|}
\hline
1 &  \\
\hline
 & 2 \\
\hline
\end{tabular} \ ,
\quad
$L_2 = $
\begin{tabular}{|c|c|}
\hline
1 & 2 \\
\hline
2 & 1 \\
\hline
\end{tabular} \ .
\end{center}
The component corresponding to $L_1$ is the odeco variety, which is defined by
\begin{align*}
    &t_{121}t_{211}+t_{122}t_{212}-t_{111}t_{221}-t_{112}t_{222} = 0 ,\\
    &t_{112}t_{211}-t_{111}t_{212}+t_{122}t_{221}-t_{121}t_{222} = 0,\\
    &t_{112}t_{121}-t_{111}t_{122}+t_{212}t_{221}-t_{211}t_{222} = 0.
\end{align*}
The component corresponding to $L_2$ is defined by
\[
\sum_{(i,j,k) \in \{1,2\}^3} (-1)^{i+j+k} \, 
    t_{i,j,k} \, g(t_{i+1,j,k}, t_{i, j+1, k}, t_{i,j,k+1}, t_{i+1, j+1, k+1}) = 0,
\]
where the sum in the indices is taken modulo $2$ and $g$ defines the elliptope:
\begin{equation*} \label{eq:elliptope}
    g(w,x,y,z) = 2\,wxy + w^2z + x^2z + y^2z - z^3.
\end{equation*}
\end{example}

\begin{proposition} \label{prop:closed-semialgebraic}
For all $L \in \Latin(n_1, \dots, n_d)$
the set $X_L \subset{\R^{n_1 \times \cdots \times n_d}}$ is semialgebraic and closed with respect to the Euclidean topology.
\end{proposition}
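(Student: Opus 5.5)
We prove both properties from the fact that $X_L$ is the image of a compact group times a linear space under a polynomial map, $X_L = G\cdot V_L$ with $G = \rO(n_1)\times\cdots\times\rO(n_d)$. The two claims, semialgebraicity and Euclidean closedness, are handled separately.

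\emph{Semialgebraicity.} Each $\rO(n_k)$ is a real algebraic set, cut out by the polynomial equations $U^\top U = I_{n_k}$. Hence $G\times V_L$ is a real algebraic subset of $\prod_k\R^{n_k\times n_k}\times \R^{n_1\times\cdots\times n_d}$. The action map
\[
((U^{(1)},\dots,U^{(d)}),\cS)\mapsto (U^{(1)},\dots,U^{(d)})\cdot\cS
\]
is polynomial in the entries. By the Tarski--Seidenberg theorem, the image of a semialgebraic set under a polynomial map is semialgebraic, so $X_L$ is semialgebraic.

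\emph{Closedness.} Let $\cT_m\in X_L$ converge to some $\cT$, and write $\cT_m = g_m\cdot \cS_m$ with $g_m\in G$ and $\cS_m\in V_L$.

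The key observation is that the action of $G$ is by isometries for the Frobenius norm. This follows from the orthogonal change-of-basis description in \Cref{sec:background}. Hence $\|\cS_m\| = \|\cT_m\|$, and this quantity is bounded because $(\cT_m)$ converges. So $(\cS_m)$ is a bounded sequence in the closed linear subspace $V_L$.

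Since $G$ is compact, we may pass to a subsequence along which $g_m\to g\in G$ and $\cS_m\to\cS\in V_L$. Continuity of the action then gives $\cT = g\cdot\cS\in X_L$.

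The only step that needs care is this norm bound on $\cS_m$, since $V_L$ is not compact. Orthogonal invariance of the Frobenius norm handles it directly, so no real obstacle remains. Alternatively, the closedness argument can be phrased as saying that the action map restricted to $G\times V_L$ is proper.
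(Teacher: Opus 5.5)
Your proof is correct and uses the same two ingredients as the paper: Tarski--Seidenberg for semialgebraicity, and compactness of $\rO(n_1)\times\cdots\times\rO(n_d)$ for closedness. The paper phrases closedness as $X_L$ being the zero set of the continuous function $\cT\mapsto\min_{g}\operatorname{dist}_{V_L}(g\cdot\cT)^2$, which puts the group element on $\cT$ rather than on the core, so it never needs your norm bound on $\cS_m$; in your setup too, that bound is optional, since $\cS_m=g_m^{-1}\cdot\cT_m$ converges as soon as $g_m$ does.
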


\begin{proof}
	The squared distance from a tensor $\cT \in \R^{n_1} \otimes \cdots \otimes \R^{n_d}$ to the linear space $V_L$ is
	\[
	\operatorname{dist}_{V_L}(\cT)^2 = \sum_{\bi \notin L}\cT_\bi^2,
	\]
	so $\cT \in X_L$ if and only if 
	\[
	\min_{(U^{(1)}, \dots, U^{(d)}) \in \rO(n_1) \times \cdots \times \rO(n_d)} \operatorname{dist}_{V_L}((U^{(1)}, \dots, U^{(d)}) \cdot \cT)^2 = 0.
	\]
	Hence, $X_L$ is Euclidean closed by compactness of $\rO(n_1) \times \cdots \times \rO(n_d)$ and continuity of $\operatorname{dist}_{V_L}$ and the group action. The set $X_L$ is semialgebraic by the Tarski-Seidenberg theorem~\cite{seidenberg}.
\end{proof}
\begin{corollary} \label{cor:bato-closed}
    The set of tensors of bato rank at most $r$ is semialgebraic and Euclidean closed for all $r \geq 1$.
\end{corollary}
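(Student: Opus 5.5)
The plan is to write the set of tensors of bato rank at most $r$ as a finite union of the sets $X_L$ and then invoke \Cref{prop:closed-semialgebraic}. By the characterization of bato rank stated after its definition,
\[
\batorank(\cT) = \min\{|L| \mid L \in \Latin(n_1, \dots, n_d),\ \cT \in X_L\},
\]
so $\batorank(\cT) \leq r$ holds exactly when $\cT \in X_L$ for some partial Latin hyperrectangle $L$ with $|L| \leq r$. This gives
\[
\{\cT \mid \batorank(\cT) \leq r\} = \bigcup_{\substack{L \in \Latin(n_1,\dots,n_d) \\ |L| \leq r}} X_L .
\]
One direction is immediate: if $\cT \in X_L$ with $|L| \le r$, then $\cT = (U^{(1)},\dots,U^{(d)})\cdot \cS$ with $\cS$ supported on $L$, and this is a bato decomposition with at most $|L|$ summands, after dropping any zero coefficients. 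For the other direction, \Cref{cor:bato-support} says that every bato decomposition is indexed by some $L \in \Latin(n_1,\dots,n_d)$ together with orthonormal bases. Since the terms of a bato decomposition are nonzero rank-one tensors, a decomposition with at most $r$ summands yields such an $L$ with $|L| \leq r$, after restricting $L$ to the support of the coefficients; any subset of a partial Latin hyperrectangle is again one.

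Because $[n_1]\times\cdots\times[n_d]$ is finite, $\Latin(n_1,\dots,n_d)$ is a finite set, so the union above is finite. By \Cref{prop:closed-semialgebraic}, each $X_L$ is semialgebraic and Euclidean closed. A finite union of semialgebraic sets is semialgebraic, and a finite union of closed sets is closed, which proves the corollary.

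There is no real obstacle here. The only point to check is that the union is finite, because closedness fails for infinite unions, and finiteness follows from the finiteness of the index set. The edge case $r \geq \prod_{k=1}^{d-1} n_k$, where the set is all bato tensors, is covered by the same argument.
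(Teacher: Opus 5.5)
Your proposal is correct and follows the paper's proof: both write the set of tensors of bato rank at most $r$ as the finite union of the sets $X_L$ over $L \in \Latin(n_1,\dots,n_d)$ with $|L| \leq r$, and then apply \Cref{prop:closed-semialgebraic}. Your explicit check of the set equality via \Cref{cor:bato-support} is a detail the paper leaves implicit.
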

\begin{proof}
	Given $r \geq 1$, the set of bato tensors of bato rank at most $r$ is the union of $X_L$ over all $L \in \Latin(n_1, \dots, n_d)$ such that $|L| \leq r$, so the statement follows from \Cref{prop:closed-semialgebraic} because there are finitely many partial Latin hyperrectangles.
\end{proof}

We suspect that the following is also true.
\begin{conjecture}\label{conj:bato-zariski-closed}
	The set $X_L$ is Zariski closed for all $L \in \Latin(n_1, \dots, n_d)$.
\end{conjecture}
Assuming \Cref{conj:bato-zariski-closed} holds, the set of bato tensors is Zariski closed. \Cref{conj:bato-zariski-closed} holds for matrices $(d=2)$, since in that case $X_L = \{M \mid \rank (M) \leq |L|\}$, which is a determinantal variety.
More evidence for \Cref{conj:bato-zariski-closed} is given in \cite{boralevi2017orthogonal}, where the authors show that the set of odeco tensors forms a real algebraic variety. That is, they show that~$X_L$ is Zariski closed for $L = \{(i, \dots, i) \mid i \leq \min_k\{n_k\}\}$. They also show that the set of alternatingly odeco tensors is Zariski closed, and alternatingly odeco tensors are bato.

\subsection{Dimension} Here we compute the dimension of the bato variety $\overline{X} \subset \R^{n_1} \otimes \cdots \otimes \R^{n_d}$. For each $k \in [d]$, let
\[
\pi_k : [n_1] \times \cdots \times [n_d] \to [n_k], \; (i_1, \dots, i_d) \mapsto i_k
\]
be the projection onto the $k$-th factor. 

\begin{lemma}
\label{lem:dimension-fixed-support}
Let \(L\in \Latin(n_1, \dots,  n_d) \) and let \(m_k=|\pi_k(L)|\) for each $k \in [d]$. Then,
\[
    \dim X_L
    =
    |L|+\sum_{k=1}^d
    \left(n_km_k-\binom{m_k+1}{2}\right).
\]
\end{lemma}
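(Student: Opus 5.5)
We compute $\dim X_L$ as the dimension of the image of the parametrization
\[
\Psi : \rO(n_1)\times\cdots\times\rO(n_d)\times V_L \to X_L,\qquad (Q^{(1)},\dots,Q^{(d)},\cS)\mapsto (Q^{(1)},\dots,Q^{(d)})\cdot\cS,
\]
by computing the rank of its differential at a generic point, that is, the dimension of the tangent space from \Cref{lem:tangent-fixed-support-bato}. Since $X_L$ is semialgebraic and $\Psi$ is a polynomial map that is equivariant under the group action, the dimension equals the generic rank of $d\Psi$. By equivariance it suffices to compute at $(I,\dots,I,\cS)$ with $\cS\in V_L$ having generic entries $s_\bi$, $\bi\in L$, supported in the standard bases. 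Write $M_k=\pi_k(L)$, so $|M_k|=m_k$.

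The plan is to decompose $T_\cS X_L = V_L + \sum_k \mathfrak{so}(n_k)\cdot\cS$ orthogonally. For $\Omega\in\mathfrak{so}(n_k)$, the tensor $(I,\dots,\Omega,\dots,I)\cdot\cS$ equals $\sum_{a,b}\Omega_{ab}\,E^{(k)}_{ab}\cdot\cS$, where $E^{(k)}_{ab}\cdot\cS$ replaces the index $b$ by $a$ in mode $k$. Three cases arise.

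\begin{enumerate}[label=(\roman*)]
\item If $a,b\notin M_k$, the term vanishes.
\item If exactly one of $a,b$ lies in $M_k$, say $b\in M_k$ and $a\notin M_k$, the term is $\Omega_{ab}\sum_{\bi\in L,\,i_k=b} s_\bi\,\be_{i_1}\otimes\cdots\otimes\be_a\otimes\cdots\otimes\be_{i_d}$. This is nonzero, and it is supported on indices whose $k$-th coordinate lies outside $M_k$. Distinct pairs $(a,b)$ give disjoint supports, and distinct modes $k$ give disjoint supports as well, because these indices are at Hamming distance exactly one from $L$, and an index at distance one from $L$ determines the mode in which it differs from $L$. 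Such an index cannot be at distance one via two modes, since two elements of $L$ cannot be at distance two through a common neighbor differing in two modes. I need to check this carefully; otherwise genericity of the $s_\bi$ should still give independence. These directions therefore contribute $m_k(n_k-m_k)$ independent dimensions per mode, all orthogonal to $V_L$ and to the other cases.
\item If $a,b\in M_k$, with $a<b$, the tensor $\Omega_{ab}(E_{ab}-E_{ba})\cdot\cS$ is supported on indices at distance one from $L$ with $k$-th coordinate in $M_k$, or on $L$ itself. By the Latin condition it never lands on $L$: replacing $i_k$ by $a$ in $\bi\in L$ yields a point at distance one from $\bi$, hence not in $L$. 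So these directions are also orthogonal to $V_L$. The claim is that the $\binom{m_k}{2}$ tensors $(E_{ab}-E_{ba})\cdot\cS$, taken over all $k$, are linearly independent for generic $s$.
\end{enumerate}

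Granting this, the total dimension is
\[
|L|+\sum_k\Bigl(m_k(n_k-m_k)+\binom{m_k}{2}\Bigr)=|L|+\sum_k\Bigl(n_km_k-\binom{m_k+1}{2}\Bigr),
\]
as claimed.

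The main obstacle is the independence in case (iii) and in the overlaps across modes. A point at Hamming distance one from $L$ may be reached from several elements of $L$, possibly through different modes. For example, in a Latin square the point $(i,j,k')$ is reached from $(i,j,k)$ via mode 3, but it may also be reached from some $(i',j,k')\in L$ via mode 1, which produces cancellations. The approach I would try first is to exhibit, for each generator $(k,a,b)$, a witness coordinate $\bj$ at which that generator has a nonzero coefficient and to argue by a triangularity or genericity argument. A cleaner alternative is to prove injectivity of $d\Psi$ modulo the stabilizer directly. Suppose $\dot\cS+\sum_k\Omega^{(k)}\cdot\cS=0$. Restricting to the slices $i_k=a$ with $a\notin M_k$ forces $\Omega^{(k)}_{ab}=0$ for $b\in M_k$. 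For $a,b\in M_k$, pairing the equation with the tensors $E^{(k)}_{ab}\cdot\cS$ and using the generic distinct values in \eqref{eq:genericity-uniqueness} (the slice norms $\sum_{i_k=a}s_\bi^2$) should give equations of the form $\Omega_{ab}(\sigma_a^2-\sigma_b^2)=0$. This mirrors the matrix SVD computation, where the flattenings have distinct singular values, and is the route I would pursue. The remaining freedom, namely $\mathfrak{so}$ of the complement of $M_k$, acts trivially, which accounts for the subtracted terms.
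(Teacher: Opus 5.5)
Your reduction to the generic rank of $d\Psi$ is sound, and the conclusion of case (ii) is correct, though not for the reason you give. An index at distance one from $L$ need not determine its mode: in the $2\times 2$ Latin square $L=\{(1,1,1),(1,2,2),(2,1,2),(2,2,1)\}$, the index $(1,1,2)$ is adjacent to $(1,1,1)$, $(1,2,2)$ and $(2,1,2)$ in three different modes. What does hold is this: if $j_k\notin M_k$, the differing coordinate must be $k$, because every $\bi\in L$ has $i_k\in M_k$.

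The genuine gap is case (iii), which carries the whole content of the lemma. You only assert it (``Granting this''), and the mechanism you propose does not work as stated. Write $\lambda^{(k)}_b=\sum_{\bi\in L,\, i_k=b}s_\bi^2$. Pairing $\dot\cS+\sum_\ell\Omega^{(\ell)}\cdot\cS=0$ with $E^{(k)}_{ab}\cdot\cS$ gives $\Omega^{(k)}_{ab}\lambda^{(k)}_b$ from mode $k$. It also gives cross terms $\langle E^{(k)}_{ab}\cdot\cS,\Omega^{(\ell)}\cdot\cS\rangle$ for $\ell\neq k$, which come from pairs in $L$ at Hamming distance two in modes $k,\ell$ and are generically nonzero. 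For the Latin square above with $\Omega^{(2)}_{12}=\omega$, one finds $\langle E^{(1)}_{12}\cdot\cS,\Omega^{(2)}\cdot\cS\rangle=\omega(s_{122}s_{212}-s_{111}s_{221})$. So you do not obtain a decoupled equation $\Omega_{ab}(\sigma_a^2-\sigma_b^2)=0$, and the independence of the $\sum_k\binom{m_k}{2}$ directions remains unproved.

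The paper avoids the Jacobian entirely. It parametrizes $X_L$ by $\prod_k\operatorname{St}(m_k,n_k)\times\R^{|L|}$ and observes via \Cref{thm:uniqueness-bato} that generic fibers are finite, since the vectors indexed by $M_k$ are singular vectors of the flattenings with distinct nonzero singular values. It then applies the fiber dimension theorem.

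If you want to keep your infinitesimal route, pair instead with the symmetric combination $(E^{(k)}_{ab}+E^{(k)}_{ba})\cdot\cS$; this amounts to differentiating the mode-$k$ Gram matrix $T^{(k)}{T^{(k)}}^\top$. This tensor has three useful properties:
\begin{enumerate}[label=(\alph*)]
\item It is supported off $L$, so it is orthogonal to $\dot\cS$.
\item It is orthogonal to $\Omega^{(\ell)}\cdot\cS$ for every $\ell\neq k$. A symmetric operator $A$ on mode $k$ commutes with a skew operator $B$ on mode $\ell$, so $\langle A\cS,B\cS\rangle=-\langle AB\cS,\cS\rangle=-\langle B\cS,A\cS\rangle$, which forces it to vanish.
\item Since the mode-$k$ slices of $\cS$ are orthogonal, its pairing with $\Omega^{(k)}\cdot\cS$ is $\Omega^{(k)}_{ab}(\lambda^{(k)}_b-\lambda^{(k)}_a)$.
\end{enumerate}
The $\lambda^{(k)}_a$ are generically distinct and nonzero on $M_k$ and vanish off $M_k$. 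Hence every $\Omega^{(k)}_{ab}$ with $a\neq b$ not both outside $M_k$ must vanish, which re-derives case (ii) as well. Then $\dot\cS=0$, so $\ker d\Psi=\bigoplus_k\mathfrak{so}(n_k-m_k)$. The rank is therefore $|L|+\sum_k\bigl(\binom{n_k}{2}-\binom{n_k-m_k}{2}\bigr)$, which equals the stated formula.

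This repaired argument is the infinitesimal version of the paper's uniqueness argument. It exhibits an explicit full-rank Jacobian at generic points, while the paper's finite-fiber count is shorter and reuses \Cref{thm:uniqueness-bato} directly.
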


\begin{proof}
Points in $X_L$ are of the form $\cT = \sum_{\bi \in L} s_{\bi} \bu_{i_1} \otimes \cdots \otimes \bu_{i_d}$.
For each factor $k \in [d]$, a set of $m_k$ orthonormal vectors is a point in the Stiefel manifold
\[
\operatorname{St}(m_k,n_k)
    =
    \{ U \in\mathbb R^{n_k\times m_k}: U^\top U=I_{m_k}\},
\]
    which has dimension $n_km_k-\binom{m_k+1}{2}$.
The coefficients \(s_{\bi}\), \(\bi\in L\), contribute \(|L|\) parameters.
By \Cref{thm:uniqueness-bato} and the fiber dimension theorem, the dimension of $X_L$ is the dimension of the
parameter space, namely
\[
    |L|+\sum_{k=1}^d
    \left(n_km_k-\binom{m_k+1}{2}\right). \qedhere
\]
\end{proof}

\begin{proof}[Proof of \Cref{thm:dimension-bato}]
The bato variety is the union of all $\overline{X_L}$ for $L \in \Latin(n_1, \dots, n_d)$. 
By \Cref{lem:dimension-fixed-support}, we obtain components $\overline{X_L}$ of maximal dimensions by picking $L$ such that $|L| = \prod_{k=1}^{d-1} n_k$, $m_k = n_k$ for all $k \leq d-1$, and $m_d = \min\{\prod_{k=1}^{d-1} n_k, n_d\}$.
If $n_d  \geq \prod_{k=1}^{d-1} n_k$, we can take any  injection $f : [n_1] \times \cdots \times [n_{d-1}] \to [m_d]$ and let
\[
L = \{(i_1, \dots, i_{d-1}, f(i_1, \dots, i_{d-1})) \mid i_k \in [n_k]\}.
\]
If $n_d < \prod_{k=1}^{d-1} n_k$, we can start with the $L$ given in the proof of \Cref{prop:maximal-bato-rank} and change some tuples $(i_1, \dots, i_{d-1}, i_d)$ by $(i_1, \dots, i_{d-1}, i_d')$ if needed to ensure that $\pi_d(L) = [n_d]$.
By counting parameters and using \Cref{lem:dimension-fixed-support} we obtain the desired result:
\[
\dim \overline X = \prod_{k=1}^{d-1} n_k + \sum_{k=1}^{d-1} \binom{n_k}{2} + n_dm_d - \binom{m_d+1}{2}. \qedhere
\]
\end{proof}

\begin{corollary}
    A generic tensor in $\R^{n_1 \times \cdots \times n_d}$ is not bato when $d \geq 3$ and $n_1, \dots, n_d\geq 2$.
\end{corollary}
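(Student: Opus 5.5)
We will compare dimensions, using \Cref{thm:dimension-bato}. The bato variety $\overline{X}$ is a finite union of the $\overline{X_L}$, so it suffices to show $\dim \overline{X} < \prod_{k=1}^d n_k$. Then $\overline X$ is a proper Zariski closed subset of the irreducible space $\R^{n_1\times\cdots\times n_d}$. Hence its complement is a nonempty Zariski open, hence dense, set of non-bato tensors.

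We reorder so that $n_1 \le \cdots \le n_d$ and set $P = \prod_{k=1}^{d-1} n_k$ and $m=\min\{n_d,P\}$. Since $m\le P$, we have $n_d m - \binom{m+1}{2} \le n_d P - \binom{m+1}{2}$. Therefore it is enough to show
\[
P + \sum_{k=1}^{d-1}\binom{n_k}{2} < \binom{m+1}{2}.
\]
We split into two cases according to how $n_d$ compares with $P$.

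\emph{Case $m=P$.} We need $\sum_{k<d}\binom{n_k}{2} < \binom{P}{2}$. Because $d\ge3$ and all $n_k\ge2$, the product $P=\prod_{k<d} n_k$ strictly exceeds each $n_k$. The inequality $\binom{ab}{2} > \binom a2 + \binom b2$ for $a,b\ge 2$ follows from a direct expansion, and induction on the number of factors extends it to several factors.

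\emph{Case $m=n_d<P$.} Here it is cleaner to work from the exact formula for the dimension,
\[
\dim\overline X = P + \sum_{k<d}\binom{n_k}{2} + \binom{n_d}{2},
\]
where $n_d^2-\binom{n_d+1}{2}=\binom{n_d}{2}$. We compare this with $n_dP$. Since $P\ge n_d$, we have $\binom{n_d}{2}\le \tfrac{(n_d-1)P}{2}$. The remaining terms satisfy $\sum_{k<d}\binom{n_k}{2}\le \tfrac{(d-1)\,n_{d-1}^2}{2}$, and this is small compared with $\tfrac{(n_d-1)P}{2}$ when $d\ge 3$. Concretely, we aim to verify
\[
P+\sum_{k=1}^{d}\binom{n_k}{2} < P\,n_d .
\]
To do so, we bound $\sum_{k\le d}\binom{n_k}{2}\le \sum_k \tfrac{n_k(n_k-1)}{2}$ and use $P\ge n_{d-1}n_{d-2}\ge 2n_{d-1}$.

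The main obstacle is carrying out this elementary inequality cleanly in all cases, especially the smallest one. For $(2,2,2)$ the dimension is $4+1+1+1=7<8$, which matches the example earlier in the paper. I would check these small cases directly and handle the general case by induction on $d$ and on the $n_k$, observing that increasing any single $n_k$ raises the right-hand side $\prod_k n_k$ faster than the left-hand side.
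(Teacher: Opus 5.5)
Your strategy, showing $\dim\overline{X} < \prod_k n_k$ via \Cref{thm:dimension-bato}, is the one the paper intends; the paper states the corollary as an immediate consequence of that theorem and writes out no inequality. Your case $n_d \ge P$ is complete: there the claim is exactly $\sum_{k<d}\binom{n_k}{2} < \binom{P}{2}$, and your superadditivity argument proves it. Your preliminary reduction to $P+\sum_{k<d}\binom{n_k}{2}<\binom{m+1}{2}$ is only usable in that case. For $(2,2,3)$ it would read $6<6$, and for $(2,2,2)$ it reads $6<3$, so you were right to drop it when $n_d<P$.

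The gap is in the case $n_d<P$. You correctly reduce to $\sum_{k=1}^{d}\binom{n_k}{2} < (n_d-1)P$. However, the estimate you propose for the first $d-1$ terms, $\sum_{k<d}\binom{n_k}{2}\le \frac{(d-1)n_{d-1}^2}{2}\le\frac{(n_d-1)P}{2}$, is false in small formats. For $(2,2,2)$ it reads $4\le 2$, for $(2,3,3)$ it reads $9\le 6$, and for $(2,2,2,2)$ it reads $6\le 4$. Combining it with $P\ge 2n_{d-1}$ does not help: that would require $(d-1)n_{d-1}\le 2(n_d-1)$, which fails whenever $n_{d-1}=n_d$. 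Your fallback (check small cases, then induct using monotonicity in each $n_k$) is not carried out, and it is not routine. Incrementing one $n_k$ can change which coordinate is largest, or push you across $n_d=P$ where the dimension formula changes. You would have to specify base cases and track both effects.

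The missing inequality has a direct proof that needs no case checks or induction.
\begin{itemize}
\item For $k<d$, $n_k-1\le n_d-1$ gives $\binom{n_k}{2}\le \frac{n_k(n_d-1)}{2}$.
\item A sum of integers that are each at least $2$ is at most their product, so $\sum_{k<d}n_k\le P$. Hence $\sum_{k<d}\binom{n_k}{2}\le\frac{(n_d-1)P}{2}$.
\item Since $2\le n_d<P$, also $\binom{n_d}{2}<\frac{(n_d-1)P}{2}$.
\end{itemize}
Adding these gives $\sum_{k\le d}\binom{n_k}{2}<(n_d-1)P$. Therefore $\dim\overline{X}=P+\sum_{k\le d}\binom{n_k}{2}<Pn_d=\prod_k n_k$.
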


\begin{remark}
    In the cubical case, when $n_1 = \cdots = n_d$, we obtain that the dimension of the bato variety is $n^{d-1} + d \binom{n}{2}$. This coincides with the lower bound of the dimension of the two-orthogonal variety given in \cite[Theorem 1.4]{ribot2026decomposing}. Indeed, their proof parametrizes a set of tensors of maximal bato rank and shows that the Jacobian of the parametrization is full rank.
    The finite-fiber argument used here avoids a direct Jacobian computation and extends readily to rectangular formats.
\end{remark}

\subsection{Irreducible components}

For a positive integer $n$, let $\mathfrak S_n$ denote the symmetric group on $[n]$. There is a natural group action of $\fS_{n_1} \times \cdots \times \fS_{n_d}$ on $\Latin(n_1, \dots, n_d)$ given by 
\[
(\sigma_1, \dots, \sigma_d) \cdot L = \{(\sigma_1(i_1), \dots, \sigma_d(i_d)) \mid (i_1, \dots, i_d) \in L\}.
\]
The orbits of this group action are called the \emph{isotopy classes}, i.e., $L, \tilde{L} \in \Latin(n_1, \dots, n_d)$ are \emph{isotopic} if $L = (\sigma_1, \dots, \sigma_d) \cdot \tilde{L}$ for some $(\sigma_1, \dots, \sigma_d) \in \fS_{n_1} \times \cdots \times \fS_{n_d}$.

\begin{proposition} \label{prop:latin-irreducible}
    The algebraic variety $\overline{X_L}$ is irreducible for each $L \in \Latin(n_1, \dots, n_d)$.
\end{proposition}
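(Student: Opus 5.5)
The plan is to exhibit $\overline{X_L}$ as the Zariski closure of the image of an irreducible variety under a polynomial map. Since the image of an irreducible space under a continuous map is irreducible, and the closure of an irreducible set is irreducible, this is enough.

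The naive parametrization is $\rO(n_1) \times \cdots \times \rO(n_d) \times V_L \to \R^{n_1 \times \cdots \times n_d}$, given by $(U^{(1)}, \dots, U^{(d)}, \cS) \mapsto (U^{(1)}, \dots, U^{(d)}) \cdot \cS$. Its image is exactly $X_L$, but $\rO(n_k)$ has two connected components, so it is not irreducible. The first step is to replace each $\rO(n_k)$ by $\rSO(n_k)$, exactly as in the parametrization $\Phi$ in the proof of \Cref{lem:tangent-fixed-support-bato}. This does not shrink the image. If $\det U^{(k)} = -1$, let $D_k = \diag(-1, 1, \dots, 1)$. Then $U^{(k)} D_k \in \rSO(n_k)$, and we can absorb $D_k$ into the core. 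Indeed, $(I, \dots, D_k, \dots, I) \cdot \cS$ only flips the signs of the entries $s_\bi$ with $i_k = 1$. So it preserves the support, and it still lies in $V_L$. Hence
\[
X_L = \left(\rSO(n_1) \times \cdots \times \rSO(n_d)\right) \cdot V_L .
\]

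The second step is to verify irreducibility of the source. $\rSO(n)$ is an irreducible real algebraic variety: it is the Zariski closure of the image of the Cayley transform $\Omega \mapsto (I - \Omega)(I + \Omega)^{-1}$. That map is a rational map defined on a Zariski open subset of the vector space $\mathfrak{so}(n)$, and its image is Zariski dense in $\rSO(n)$. One can see the density by dimension and connectedness, or by the standard fact that $\rSO(n)$ is a connected linear algebraic group. The linear space $V_L$ is irreducible, and a product of irreducible varieties is irreducible.

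Alternatively, and more cleanly, we can bypass $\rSO$ entirely. Compose the Cayley transforms into a rational map
\[
\mathfrak{so}(n_1) \times \cdots \times \mathfrak{so}(n_d) \times V_L \dashrightarrow \R^{n_1 \times \cdots \times n_d}
\]
whose domain is an open dense subset of an affine space. Its image is then irreducible. It remains to check that the Zariski closure of this image contains $X_L$. The image of the Cayley transform is dense in $\rSO(n_k)$ in the Euclidean topology, since it is the complement of the matrices with eigenvalue $-1$. By continuity of the action, the Euclidean closure of the image therefore contains $X_L$. Zariski closure contains Euclidean closure, so both closures coincide with $\overline{X_L}$.

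The main subtlety is the real-algebraic setting. Over $\R$, irreducibility of $\rSO(n)$ is less automatic than over $\C$, which is why the Cayley-transform argument is the route to use: it reduces everything to the irreducibility of an affine space, which is unambiguous.

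The final step is to conclude. $\overline{X_L}$ is the Zariski closure of the image of an irreducible set under a polynomial or rational map, and is therefore irreducible.
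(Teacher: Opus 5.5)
Your proof is correct and follows the paper's argument: absorb the reflection $\diag(-1,1,\dots,1)$ into the core to get $X_L = (\rSO(n_1)\times\cdots\times\rSO(n_d))\cdot V_L$, then take the Zariski closure of the image of an irreducible source under a polynomial map. Your Cayley-transform step only adds a justification that $\rSO(n)$ is irreducible over $\R$, which the paper takes for granted.
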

\begin{proof}
    For a positive integer $n$, let $\rSO(n) = \{U \in \rO(n) \mid \det(U) = 1\}$ denote the special orthogonal group.
    We have the equality of orbits
    \[
    (\rO(n_1) \times \cdots \times \rO(n_d) ) \cdot V_L = (\rSO(n_1) \times \cdots \times \rSO(n_d)) \cdot V_L,
    \]
    because given $\cT = (U^{(1)}, \dots, U^{(d)}) \cdot \cS$ we may flip the sign of the first column of $U^{(k)}$ and the first $k$-slice of $\cS$ 
    leaving the value $(U^{(1)}, \dots, U^{(d)}) \cdot \cS$ unchanged. So $\overline{X_L}$ is the closure of the image of an irreducible variety under a polynomial map, hence irreducible.
\end{proof}
\begin{remark}
Given $L \in \Latin(n_1, \dots, n_d)$, the set $X_L$ is connected with respect to the Euclidean topology for being the image of a connected set under a continuous map.
\end{remark}

In order to characterize the irreducible components of the bato variety, it is beneficial to work over the complex numbers, where the Zariski and Euclidean closures of our sets of interest coincide.
Given $L \in \Latin(n_1, \dots, n_d)$, we denote the complexification of the set~$X_L \subset \R^{n_1 \times \cdots \times n_d}$ by 
\[
X_L^\C = (\rO(n_1, \C) \times \cdots \times \rO(n_d, \C)) \cdot V_L^\C \subset \C^{n_1 \times \cdots \times n_d},
\]
where
$\rO(n, \C) = \{ U \in \C^{n \times n} \mid U^\top U = I\}$ and $V_L^\C = \{ \cS \in \C^{n_1 \times \cdots \times n_d} \mid s_\bi = 0 \text{ for all } \bi \notin L\}$. Crucially, the Zariski and Euclidean closures of $X_L^\C$ coincide, by Chevalley's theorem.

\begin{theorem}[Isotopy invariants] \label{thm:isotopy-invariants}
    Two partial Latin hyperrectangles $L$ and $\tilde L$ are isotopic if and only if $\overline{X_L} = \overline{X_{\tilde L}}$.
\end{theorem}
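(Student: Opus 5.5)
The forward direction is direct. Suppose $L = (\sigma_1, \dots, \sigma_d) \cdot \tilde L$, and let $P_k \in \rO(n_k)$ be the permutation matrix of $\sigma_k$. Then $(P_1, \dots, P_d) \cdot V_{\tilde L} = V_L$. Since $X_L$ is the $\rO(n_1) \times \cdots \times \rO(n_d)$-orbit of $V_L$, this gives $X_L = X_{\tilde L}$, and hence the closures agree.

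For the converse, suppose $\overline{X_L} = \overline{X_{\tilde L}}$. By \Cref{prop:latin-irreducible} this common variety $Y$ is irreducible. The plan is to pick a tensor that is simultaneously generic in $X_L$ and in $X_{\tilde L}$, and then read off the support from its unique bato decomposition.

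\begin{enumerate}[label=(\arabic*)]
\item Since $X_L$ is semialgebraic with $\overline{X_L} = Y$, it contains a nonempty Euclidean-open subset of $Y_{\mathrm{reg}}$ of dimension $\dim Y$. The same holds for $X_{\tilde L}$.
\item Let $G_L \subset X_L$ be the tensors satisfying the genericity conditions \eqref{eq:genericity-uniqueness}. These are the images of an open dense set of parameters, so $G_L$ contains a dense semialgebraic subset of $X_L$ of full dimension. Its complement $X_L \setminus G_L$ then has dimension $< \dim Y$. The same holds for $G_{\tilde L}$.
\item I need $G_L \cap G_{\tilde L} \neq \varnothing$. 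This is the main obstacle: two full-dimensional semialgebraic subsets of an irreducible real variety need not intersect. What I would try first is to work over $\C$, using the complexifications $X_L^\C$ and $X_{\tilde L}^\C$. These are constructible, and their Zariski and Euclidean closures coincide. Their closures are irreducible and equal, since both equal $Y^\C$, so each contains a Zariski-dense open subset of $Y^\C$, and two such subsets do intersect. The complexified uniqueness argument needs the singular values of flattenings to be distinct and nonzero, with $T^{(k)}T^{(k)\top}$ diagonalizable by a complex orthogonal matrix. This holds on a Zariski-open set of parameters, so \Cref{thm:uniqueness-bato} extends to give uniqueness for a generic $\cT \in X_L^\C \cap X_{\tilde L}^\C$.
\item Take such a $\cT$. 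It has an $L$-decomposition and an $\tilde L$-decomposition. Uniqueness up to permutation within each factor and sign changes then means the two orthonormal bases of each factor agree up to a permutation $\sigma_k$ and signs.
\item Generic coefficients are nonzero, so the supports correspond exactly: $\tilde L = (\sigma_1, \dots, \sigma_d) \cdot L$. Hence $L$ and $\tilde L$ are isotopic.
\end{enumerate}

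In step (3) I must also check that the uniqueness conditions, as polynomial non-vanishing conditions in the coefficients together with the complex orthogonal frames, really cut out a dense open subset of $X_L^\C$. I would do this by pulling back along the dominant parametrization map $\rSO(n_1, \C) \times \cdots \times \rSO(n_d, \C) \times V_L^\C \to X_L^\C$.

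An alternative to complexifying, if step (3) proves awkward, is to argue directly in the real setting. The real bato decomposition of a tensor in $X_L$ is recovered from the SVDs of its flattenings, so the support pattern, up to isotopy, can be read from the zero pattern of the core tensor $(U^{(1)}, \dots, U^{(d)})^\top \cdot \cT$. This pattern is locally constant on the open set where the flattening singular values are distinct and nonzero. The set $X_L \cap X_{\tilde L}$ is then dense in $Y$ near a common generic point, but establishing that density brings back the same obstacle.
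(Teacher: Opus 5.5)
Your complex route in step (3) is correct, but it is a different argument from the paper's.

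The paper never looks for a common point of the two sets. It takes a generic $\cT\in V_L^\C$, meaning nonzero coefficients and Gram eigenvalues $\lambda^{(k)}_a=\sum_{i_k=a}s_\bi^2$ nonzero and pairwise distinct. It uses $\cT\in\overline{X_{\tilde L}^\C}$ to approximate $\cT$ by a sequence $\cT_m\in X_{\tilde L}^\C$. It then tracks the simple eigenvalues and the rank-one eigenprojections $\bu\bu^\top$ of the mode Gram matrices $T_m^{(k)}T_m^{(k)\top}$ along the sequence. Projecting $\cT_m$ with these shows that each $s_\bi\,\be_{i_1}\otimes\cdots\otimes\be_{i_d}$ is a limit of summands indexed by $(\alpha_1(i_1),\dots,\alpha_d(i_d))\in\tilde L$. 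So $L$ is isotopic to a subset of $\tilde L$, and symmetry plus a cardinality count finishes.

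You instead push forward the Zariski-open generic parameter loci along the dominant parametrizations. This gives two dense constructible subsets of the irreducible variety $\overline{X_L^\C}=\overline{X_{\tilde L}^\C}$, which must intersect. At a common point you apply a complex version of \Cref{thm:uniqueness-bato}, and this yields $\tilde L=(\sigma_1,\dots,\sigma_d)\cdot L$ directly.

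\emph{What your route buys.} It is more algebraic and reuses the uniqueness theorem. It avoids any continuity argument for eigenprojections along a sequence of possibly unbounded complex orthogonal matrices. Your requirement that the $\lambda^{(k)}_a$ be nonzero really matters over $\C$, since it is not implied by nonvanishing of the coefficients. Note also that these are eigenvalues of the complex symmetric matrices $T^{(k)}T^{(k)\top}$, not singular values.

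\emph{What the paper's route buys.} Its degeneration argument uses only $V_L^\C\subseteq\overline{X_{\tilde L}^\C}$. It therefore also proves the one-sided \Cref{cor:isotopic-subset}: $\overline{X_{\tilde L}}\subseteq\overline{X_L}$ if and only if $\tilde L$ is isotopic to a subset of $L$. Your argument needs a generic point of one set to lie in the other set itself, not merely in its closure. Containment of closures does not provide that.

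\emph{Minor points.} Steps (1)--(2) and the real-setting alternative can be dropped. The one step you leave unjustified, as the paper also does, is that equal real Zariski closures give equal complex ones. This follows from Zariski density of $\rSO(n,\R)$ in $\rSO(n,\C)$, which makes $X_L$ Zariski dense in $X_L^\C$.
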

\begin{proof}
    Suppose that $L$ and $\tilde L$ are isotopic. Then, there exists a tuple of permutation matrices $(U^{(1)}, \dots, U^{(d)}) \in \rO(n_1) \times \cdots \times \rO(n_d)$ such that $V_L = (U^{(1)}, \dots, U^{(d)}) \cdot V_{\tilde L}$, so $\overline{X_{L}} = \overline{X_{\tilde L}}$.

    Conversely, assume $\overline{X_L} = \overline{X_{\tilde L}}$. This is equivalent to $\overline{X_{L}^\C} = \overline{X_{\tilde L}^\C}$, because $\overline{X_L^\C}$ and $\overline{X_{\tilde L}^\C}$ are irreducible and contain real smooth points.
    Therefore, $V_L^\C \subseteq \overline{X_{\tilde L}^\C}$.
    
    Take $\cT=\sum_{\bi\in L}s_{\bi}e_{i_1}^{(1)} \otimes \cdots \otimes e_{i_d}^{(d)}\in V_L^\C$ such that \(s_{\bi}\neq0\), and such that for every $k \in [d]$ and every $a \in \pi_k(L)$, the numbers  $\lambda^{(k)}_{a}=\sum_{\bi\in L \mid i_k=a}s_{\bi}^2,$ are nonzero and pairwise distinct.
    For $a \notin \pi_k(L)$, we define $\lambda^{(k)}_a$ to be zero.
    Since $\cT \in \overline{X_{\tilde L}^\C}$, there exists a sequence of tensors $\cT_m= (U_m^{(1)}, \dots, U_m^{(d)})\cdot \tilde \cS_m$ with $U_m^{(k)} \in \rO(n_k, \C)$ and~$\tilde \cS_m = \sum_{\bi \in \tilde L} \tilde{s}_{\bi, m} \be_{i_1} \otimes \cdots \otimes \be_{i_d} \in V_{\tilde L}^\C$ such that
    $\cT_m\xrightarrow{m \to \infty} \cT$ in the Euclidean topology.
    
    For each $k \in [d]$, the mode-$k$ Gram matrix of the tensor $\cT_m$ is
    \[
    T_m^{(k)} {T_m^{(k)}}^\top = U^{(k)}_m \tilde \Lambda^{(k)}_m {U_m^{(k)}}^\top
    \]
    where $\tilde \Lambda_m^{(k)} = \diag (\tilde \lambda^{(k)}_{1, m}, \dots, \tilde \lambda^{(k)}_{n_k, m} )$ with $\tilde \lambda^{(k)}_{a, m} = \sum_{\bi \in \tilde L \mid i_k = a} \tilde s_{\bi, m}^2$. Similarly, the mode-$k$ Gram matrix of $\cT$ is
    \[T^{(k)} {T^{(k)}}^\top = \diag(\lambda^{(k)}_1, \dots, \lambda^{(k)}_{n_k}),
    \]
    and we obtain $T_m^{(k)} {T_m^{(k)}}^\top \xrightarrow{m \to \infty} T^{(k)} {T^{(k)}}^\top$ by continuity.
    After passing to a subsequence if needed, for each $k \in [d]$ there exist injections $\alpha_k : \pi_k(L) \to [n_k]$ such that $\tilde \lambda^{(k)}_{\alpha_k(a), m} \xrightarrow{m \to \infty} \lambda_{a}^{(k)}$ for all $a \in \pi_k(L)$, because the eigenvalues $\{ \lambda_a^{(k)} \mid a \in \pi_k(L)\}$ are distinct. These injections may be extended to permutation of $[n_k]$.

    Fix $\bi \in L$ and, for each $k \in [d]$, consider the sequence of orthogonal projections $P^{(k)}_{i_k, m}$ from $\C^{n_k}$ onto the lines spanned by $\bu_{\alpha_k(i_k), m}^{(k)}$:
    the sequence of eigenvectors of $T_m^{(k)} {T_m^{(k)}}^\top$ whose eigenvalues tend to $\lambda^{(k)}_{i_k}$. We have $P^{(k)}_{i_k, m} \xrightarrow{m \to \infty} E_{i_k}$, where $E_{i_k}$ is the coordinate projection onto the line spanned by $\be_{i_k}$. By continuity we get
    \[
    \begin{array}{ccc}
      \underbrace{\tilde{s}_{\alpha_1(i_1), \dots, \alpha_d(i_d), m} \bu^{(1)}_{\alpha_1(i_1), m} \otimes \cdots \otimes \bu^{(d)}_{\alpha_d(i_d), m}}_{\, \rotatebox{90}{=}}  & \xrightarrow{m \to \infty} & \underbrace{s_{\bi} \be_{i_1} \otimes \cdots \otimes \be_{i_d}}_{\, \rotatebox{90}{=}}. \\
    (P^{(1)}_{i_1, m}, \dots, P^{(d)}_{i_d, m}) \cdot \cT_m & & (E_{i_1}, \dots, E_{i_d}) \cdot \cT
    \end{array}
    \]
    Hence, $(\alpha_1(i_1), \dots, \alpha_d(i_d)) \in \tilde{L}$ for all $\bi \in L$, so $L$ is isotopic to a subset of $\tilde L$. By symmetry, since $\overline{X_L} = \overline{X_{\tilde L}}$, we deduce that $\tilde{L}$ is isotopic to a subset of $L$, so $L$ and $\tilde{L}$ are isotopic.
\end{proof}

The proof above implies the following.

\begin{corollary} \label{cor:isotopic-subset}
    One has $\overline{X_{\tilde L}} \subseteq \overline{X_L}$ if and only if $\tilde{L}$ is isotopic to a subset of $L$.
\end{corollary}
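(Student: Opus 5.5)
Both directions come out of the argument used for \Cref{thm:isotopy-invariants}, which in fact shows more than the equality case.

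\emph{If $\tilde L$ is isotopic to a subset of $L$.} Choose $(\sigma_1,\dots,\sigma_d)$ with $(\sigma_1,\dots,\sigma_d)\cdot\tilde L \subseteq L$, and let $P^{(k)}$ be the corresponding permutation matrices, which lie in $\rO(n_k)$. Then $(P^{(1)},\dots,P^{(d)})\cdot V_{\tilde L} = V_{(\sigma_1,\dots,\sigma_d)\cdot\tilde L}$, and this is contained in $V_L$ because a coordinate subspace supported on a subset of $L$ lies in $V_L$. Since $X_{\tilde L}$ is the orbit of $V_{\tilde L}$ under $\rO(n_1)\times\cdots\times\rO(n_d)$, we get
\[
X_{\tilde L} = (\rO(n_1)\times\cdots\times\rO(n_d))\cdot (P^{(1)},\dots,P^{(d)})\cdot V_{\tilde L} \subseteq X_L,
\]
and taking Zariski closures gives $\overline{X_{\tilde L}}\subseteq\overline{X_L}$.

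\emph{If $\overline{X_{\tilde L}}\subseteq\overline{X_L}$.} I would first pass to the complexification, as in the proof of \Cref{thm:isotopy-invariants}. The variety $\overline{X_{\tilde L}^\C}$ is irreducible by \Cref{prop:latin-irreducible} and is the Zariski closure of its real points $X_{\tilde L}$, which contain real smooth points. Any polynomial vanishing on $\overline{X_L}$ therefore vanishes on $X_{\tilde L}$, hence on $\overline{X_{\tilde L}^\C}$, so $\overline{X_{\tilde L}^\C}\subseteq\overline{X_L^\C}$. By Chevalley's theorem $\overline{X_L^\C}$ equals the Euclidean closure of $X_L^\C$. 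In particular $V_{\tilde L}^\C\subseteq \overline{X_L^\C}$.

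Next, take $\cT=\sum_{\bi\in\tilde L}s_\bi \be_{i_1}\otimes\cdots\otimes\be_{i_d}$ with all $s_\bi\ne0$, chosen so that for each $k$ the mode-$k$ Gram eigenvalues $\lambda^{(k)}_a=\sum_{\bi\in\tilde L,\, i_k=a}s_\bi^2$ with $a\in\pi_k(\tilde L)$ are nonzero and pairwise distinct. Pick $\cT_m=(U^{(1)}_m,\dots,U^{(d)}_m)\cdot\cS_m\to\cT$ with $\cS_m\in V_L^\C$, and run the argument of \Cref{thm:isotopy-invariants} verbatim with the roles of $L$ and $\tilde L$ swapped. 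Convergence of the mode-$k$ Gram matrices gives, after passing to a subsequence, injections $\alpha_k:\pi_k(\tilde L)\to[n_k]$ matching eigenvalues. Applying the eigenprojections $P^{(k)}_{i_k,m}$ to $\cT_m$ isolates the single term $\tilde s_{\alpha_1(i_1),\dots,\alpha_d(i_d),m}\,\bu^{(1)}_{\alpha_1(i_1),m}\otimes\cdots\otimes\bu^{(d)}_{\alpha_d(i_d),m}$. By continuity this term tends to $s_\bi \be_{i_1}\otimes\cdots\otimes\be_{i_d}\neq0$, so the coefficient is eventually nonzero and $(\alpha_1(i_1),\dots,\alpha_d(i_d))\in L$. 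Extending each $\alpha_k$ to a permutation of $[n_k]$ then shows that $\tilde L$ is isotopic to a subset of $L$.

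The main obstacle is the isolation step for complex orthogonal matrices. Over $\C$, $\rO(n,\C)$ is not compact and the bilinear form $\bu^\top\bu$ can degenerate, so the projections must be the bilinear-orthogonal eigenprojections of $T^{(k)}_m{T^{(k)}_m}^\top = U^{(k)}_m\tilde\Lambda^{(k)}_m{U^{(k)}_m}^\top$. Their convergence to $E_{i_k}$ follows from spectral projections depending continuously on a matrix with a simple eigenvalue, namely $\lambda^{(k)}_{i_k}$, which is nonzero and distinct from the others. The one subtlety is that the limit Gram matrix may also have eigenvalue $0$ with multiplicity, coming from $a\notin\pi_k(\tilde L)$. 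This does not affect the spectral projection for the simple nonzero eigenvalue, so the convergence still holds.
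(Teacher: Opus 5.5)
Your proposal is correct and follows the paper's route. The paper obtains this corollary by observing that the proof of \Cref{thm:isotopy-invariants} already gives the one-sided statement: permutation matrices for one direction, and for the other the complexified limit argument with mode-$k$ Gram matrices and eigenprojections, with the roles of $L$ and $\tilde L$ swapped as you do. Your extra justification of the projection step over $\rO(n,\C)$, via continuity of spectral projections for the simple nonzero eigenvalue, fills in a detail the paper leaves implicit.
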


As a consequence, we deduce that the irreducible components of the bato variety are indexed by isotopy classes of maximal partial Latin hyperrectangles.

\begin{proof}[Proof of \Cref{thm:irreducible-components-bato}]
For every $L\in\Latin(n_1,\ldots,n_d)$, the variety
$\overline{X_L}$ is irreducible by \Cref{prop:latin-irreducible}. If $L \subsetneq \tilde L$, then $\overline{X_L} \subsetneq \overline{X_{\tilde L}}$, so non-maximal partial Latin hyperrectangles do not give irreducible components of the bato variety. If $L$ and $\tilde L$ are maximal, then $\overline{X_L} = \overline{X_{\tilde L}}$ if and only if $L$ and $\tilde L$ are isotopic, by \Cref{thm:isotopy-invariants}.
\end{proof}

\begin{example}
There are four maximal isotopy classes in $\Latin(3,3,3)$ with representatives:
\begin{center}
\begin{tabular}{|c|c|c|}
\hline
1 &  &  \\
\hline
 & 2 & 3 \\
\hline
 & 3 & 2 \\
\hline
\end{tabular}
\quad
\begin{tabular}{|c|c|c|}
\hline
1 & 2 &  \\
\hline
2 & 1 & 3 \\
\hline
 & 3 & 1 \\
\hline
\end{tabular}
\quad
\begin{tabular}{|c|c|c|}
\hline
1 & 2 &  \\
\hline
2 & 1 & 3 \\
\hline
 & 3 & 2 \\
\hline
\end{tabular}
\quad
\begin{tabular}{|c|c|c|}
\hline
1 & 2 & 3 \\
\hline
2 & 3 & 1 \\
\hline
3 & 1 & 2 \\
\hline
\end{tabular}
\end{center}
Therefore, the bato variety $\overline{X} \subset (\R^3)^{\otimes 3}$ has four irreducible components: one of dimension $5 + 3\binom{3}{2} = 14$, two of dimension $7 + 3\binom{3}{2} = 16$, and one of dimension $9 + 3\binom{3}{2} = 18$.
\end{example}

\Cref{thm:isotopy-invariants} provides an assignment of an irreducible variety of bato tensors to each partial Latin hyperrectangle that is a complete isotopy invariant. Studying the interplay between the algebraic invariants of these varieties and the combinatorial invariants of isotopy classes deserves further consideration.

\section{The world is bato} \label{sec:world}

Generic tensors of order at least three are not bato, yet many tensors arising naturally in algebra, geometry, and complexity theory are.

\begin{example}[Determinant]
    The determinant 
    \[
    \det\nolimits_n = \sum_{\sigma \in \fS_n} \operatorname{sgn}(\sigma) \, \be_{\sigma(1)} \otimes \cdots \otimes \be_{\sigma(n)} \in (\R^n)^{\otimes n}
    \]
    is a bato tensor since two distinct permutations differ in at least two positions. Moreover, its Frobenius norm is $\| \det_n\| = \sqrt{n!}$ and its spectral norm is $\| \det_n\|_\sigma = 1$ by Hadamard's inequality, so $\batorank(\det\nolimits_n) = n!$ by \Cref{prop:bato-rank-lower-bound}.
\end{example}

The remaining examples arise as structure tensors of finite-dimensional real inner-product algebras.
Let $A$ be such an algebra. Its multiplication map
\[
m_A:A\times A\longrightarrow A,
\qquad
(x,y)\longmapsto xy,
\]
is represented by a tensor
\(
\cT_A\in A^*\otimes A^*\otimes A,
\)
called the \emph{structure tensor} of $A$. 
If $\{\be_1,\ldots,\be_n\}$ is an orthonormal basis of $A$ and
\(
\be_i\be_j=\sum_{k=1}^n c_{ij}^k\be_k,
\)
then
\[
\cT_A
=
\sum_{i,j,k=1}^n
c_{ij}^k\,\be_i^*\otimes\be_j^*\otimes\be_k.
\]

\begin{definition}
An inner-product algebra $A$ is called \emph{bato} if its structure
tensor $\cT_A$ is bato.
\end{definition}

From the viewpoint of bilinear complexity, an ordinary rank
decomposition gives an algorithm for multiplication using scalar
multiplications of linear forms. A bato decomposition is a more
structured algorithm: after orthogonal changes of coordinates in the
two inputs and the output, each multiplication gate multiplies one
input coordinate by one input coordinate and routes the result to a
single output coordinate. The partial Latin condition makes this
routing collision-free along every row and column. The statements we make in the following regarding the bato rank of structure tensors follow from the matrix-rank formula for bato slices given in \Cref{prop:bato-rank-slices}.

\begin{example}[Entrywise multiplication]
Equip $A=\R^n$ with the entrywise, or Hadamard, product
\(
(x_1,\ldots,x_n)\odot(y_1,\ldots,y_n)
=
(x_1y_1,\ldots,x_ny_n).
\)
The structure tensor of this algebra is
\[
\cT_{\odot}
=
\sum_{i=1}^n
\be_i^*\otimes\be_i^*\otimes\be_i.
\]
Thus $\cT_{\odot}$ is odeco and $\batorank(\cT_\odot)=n$.
\end{example}

\begin{example}[Matrix multiplication]
For $A = \R^{n \times n}$, the matrix multiplication tensor
\[
\matmul_{n} = \sum_{i,j,k=1}^n E_{ik}^* \otimes E_{kj}^* \otimes E_{ij} 
\]
is bato (here, $\{E_{ij}\}$ denotes the canonical basis). For example, for $n=2$, labeling the tuples $(1,1) \leftrightarrow 1, (1,2) \leftrightarrow 2, (2,1) \leftrightarrow 3, (2,2) \leftrightarrow 4,$ we get the partial Latin square
    \begin{center}
        \begin{tabular}{|c|c|c|c|}
        \hline
        1 & 2 &  &  \\
        \hline
         &  & 1 & 2 \\
        \hline
        3 & 4 &  &  \\
        \hline
        & & 3 & 4 \\
        \hline
        \end{tabular}
    \end{center}
    The bato decomposition of $\matmul_n$ given above has $n^3$ terms. Given $X \in \R^{n \times n} \setminus \{0\}$, left multiplication $L_X : \R^{n \times n} \to \R^{n \times n}, Y \mapsto XY$ satisfies that $\rank(L_X) = n \,\rank(X) \geq n$, so $\batorank(\matmul_n) \geq n^2 \cdot n =  n^3$ by \Cref{prop:bato-rank-slices}. The displayed decomposition attains this bound, so $\batorank(\matmul_n) = n^3$.
\end{example}

\begin{example}[Normed division algebras]
    The real algebras of real numbers $\R$, complex numbers $\C$, quaternions $\mathbb{H}$, and octonions $\mathbb{O}$ are bato. In the standard bases, the product of two units is, up to sign, another unit, and the multiplication tables have Latin structure:\\[1em]
\resizebox{\textwidth}{!}{
\begin{tabular}{@{}c@{\qquad}c@{\qquad}c@{\qquad}c@{}}
$\begin{array}[t]{c|c}
\mathbb{R} & 1\\ \hline
1 & 1
\end{array}$
&
$\begin{array}[t]{c|cc}
\mathbb{C} & 1 & i\\ \hline
1 & 1 & i\\
i & i & -1
\end{array}$
&
$\begin{array}[t]{c|rrrr}
\mathbb{H} & 1 & i & j & k\\ \hline
1 & 1 & i & j & k\\
i & i & -1 & k & -j\\
j & j & -k & -1 & i\\
k & k & j & -i & -1
\end{array}$
&
$\begin{array}[t]{c|rrrrrrrr}
\mathbb{O}
 & 1 & i & j & k & \ell & i\ell & j\ell & k\ell\\ \hline
1
 & 1 & i & j & k & \ell & i\ell & j\ell & k\ell\\
i
 & i & -1 & k & -j & i\ell & -\ell & -k\ell & j\ell\\
j
 & j & -k & -1 & i & j\ell & k\ell & -\ell & -i\ell\\
k
 & k & j & -i & -1 & k\ell & -j\ell & i\ell & -\ell\\
\ell
 & \ell & -i\ell & -j\ell & -k\ell & -1 & i & j & k\\
i\ell
 & i\ell & \ell & -k\ell & j\ell & -i & -1 & -k & j\\
j\ell
 & j\ell & k\ell & \ell & -i\ell & -j & k & -1 & -i\\
k\ell
 & k\ell & -j\ell & i\ell & \ell & -k & -j & i & -1
\end{array}$
\end{tabular}
}
\\[1em]
Given $A\in\{\R,\C,\mathbb H,\mathbb O\}$ and
$x\in A\setminus\{0\}$, the linear map
\(
L_x:A\to A,\ y\mapsto xy,
\)
is invertible. Therefore, every bato decomposition of $\cT_A$ has at
least $(\dim_{\R}A)^2$ terms, so the standard Latin multiplication
table attains this bound. Hence
\[
\batorank(\cT_{\R})=1,\quad
\batorank(\cT_{\C})=4,\quad
\batorank(\cT_{\mathbb H})=16,\quad
\batorank(\cT_{\mathbb O})=64.
\]
\end{example}

\begin{example}[Group algebras]
Let $G$ be a finite group and let $\R[G]$ be its real group algebra.
Equip $\R[G]$ with the inner product for which the elements of $G$
form an orthonormal basis. Its multiplication tensor is
\[
\cT_{\R[G]}
=
\sum_{g,h\in G}
g^*\otimes h^*\otimes gh.
\]
The multiplication table of a group is a Latin square: for fixed
$g$, the map $h\mapsto gh$ is a permutation of $G$, and for fixed
$h$, the map $g\mapsto gh$ is also a permutation. Hence $\R[G]$ is
a bato algebra and
\(
\batorank(\cT_{\R[G]})\leq |G|^2.
\)

The group-basis decomposition need not be minimal. For example, for the cyclic group~$C_2$ the structure tensor $\cT_{\R[C_2]}$ is the odeco tensor shown in \eqref{eq:non-unique-bato}, so $\batorank(\cT_{\R[C_2]})=2 < 4$.
\end{example}

\begin{example}[Cross product]
The cross product on $\R^3$ has structure tensor
\[
\cT_{\times}= \be_1^*\otimes\be_2^*\otimes\be_3
-\be_1^*\otimes\be_3^*\otimes\be_2
+\be_2^*\otimes\be_3^*\otimes\be_1-\be_2^*\otimes\be_1^*\otimes\be_3
+\be_3^*\otimes\be_1^*\otimes\be_2
-\be_3^*\otimes\be_2^*\otimes\be_1,
\]
which is a bato tensor supported on the partial Latin square
\begin{center}
    \begin{tabular}{|c|c|c|}
        \hline
         & 3 & 2 \\
        \hline
        3 &  & 1 \\
        \hline
        2 & 1 & \\
        \hline
    \end{tabular}
\end{center}
For every $x\in\R^3\setminus\{0\}$,
the linear map $\R^3\to\R^3,
\
y\mapsto x\times y$
has kernel $\R x$ and hence rank two. It follows that every bato
decomposition has at least $3\cdot 2=6$ terms, so
\(
\batorank(\cT_{\times})=6.
\)
\end{example}

\begin{example}[Exterior algebras]
Equip the exterior algebra
\(
\bigwedge \R^n=\bigoplus_{r=0}^n\bigwedge^r \R^n
\)
with the inner product for which the standard blades
\(
\be_I=\be_{i_1}\wedge\cdots\wedge\be_{i_r},
\
I=\{i_1<\cdots<i_r\}\subseteq[n],
\)
form an orthonormal basis. For $I,J\subseteq[n]$ one has
\[ 
\be_I\wedge\be_J
=
\begin{cases}
\varepsilon(I,J)\,\be_{I\cup J},
    & I\cap J=\varnothing,\\
0,  & I\cap J\neq\varnothing,
\end{cases}
\]
where $\varepsilon(I,J)\in\{\pm1\}$ is the sign required to reorder
the concatenated indices increasingly.
For fixed $I$, the map $J\mapsto I\cup J$ is injective on the sets
disjoint from $I$, and the analogous statement holds after fixing
$J$. Thus the blade basis gives a partial-Latin multiplication table,
so $\bigwedge \R^n$ is a bato algebra. The bato decomposition
\[
\cT_{\bigwedge \R^n} = \sum_{I, J \subseteq [n]} \be_I^* \otimes \be_J^* \otimes (\be_I \wedge \be_J)
\]
has 
$\#\{(I,J):I\cap J=\varnothing\}=3^n$
terms, because each element of $[n]$ may belong to $I$, to $J$, or
to neither. Consequently,
$
\batorank\!\left(\cT_{\bigwedge \R^n}\right)\leq 3^n.
$

More generally, a Clifford algebra generated by an $n$-dimensional real vector space is bato, and the bato rank of the corresponding structure tensor is at most $(2^n)^2 = 4^n$.
\end{example}

\begin{example}[Truncated polynomials]
Let
\(
A_n=\R[\varepsilon]/(\varepsilon^n)
\)
and equip it with the  orthonormal basis $\bar 1,\bar\varepsilon,\ldots,\bar \varepsilon^{n-1}$. Since
\[
\bar\varepsilon^i \bar\varepsilon^j=
\begin{cases}
\bar \varepsilon^{i+j},&i+j<n,\\
0,&i+j\geq n,
\end{cases}
\]
the monomial basis gives a bato decomposition with
$n(n+1)/2$ terms and this decomposition is minimal. Indeed, for $f\in A_n \setminus\{0\}$, let $\nu(f)$ denote the
$\bar \varepsilon$-adic valuation of $f$ and consider the linear map $L_f : A_n \to A_n, g \mapsto fg$. Then,
$\rank (L_f)=n-\nu(f)$. Moreover, for every basis
$f_1,\ldots,f_n$ of $A_n$, after reordering,
$\nu(f_i)\leq i-1$. Hence
\[
\sum_{i=1}^n\rank L_{f_i}
=
n^2-\sum_{i=1}^n\nu(f_i)
\geq
n^2-\binom n2
=
\binom{n+1}{2}.
\]
Therefore,
\[
\batorank(\cT_{A_n})=\binom{n+1}{2}.
\]
In particular, the structure tensor of the dual numbers $\R[\varepsilon]/(\varepsilon^2)$
has bato rank three.
\end{example}

\begin{example}[Path algebras]
Let $\Gamma$ be a finite acyclic quiver. Its path algebra $\R \Gamma$ has a
basis consisting of all oriented paths in $\Gamma$, including the trivial
path $\varepsilon_v$ at each vertex $v$. Since $\Gamma$ is acyclic, there
are only finitely many such paths. For two basis paths $p$ and $q$, their product is
\[
pq=
\begin{cases}
\text{the concatenated path }pq,
    & \text{if the terminal vertex of $p$ is the initial vertex of $q$},\\
0,  & \text{otherwise}.
\end{cases}
\]
If $p$ is fixed, two different composable paths $q$ give two different
concatenations $pq$; similarly, if $q$ is fixed, two different paths
$p$ give different concatenations. Thus, the path basis gives a
partial-Latin multiplication table, so $\R \Gamma$ is a bato algebra with
\[
\batorank(\cT_{\R \Gamma})
\leq
\sum\nolimits_{v}
\#\{p:t(p)=v\}\,
\#\{q:s(q)=v\},
\]
where $s(q)$ and $t(p)$ denote the initial and terminal
vertices.
\end{example}

\begin{example}[Tensor product of bato algebras]
    The tensor product of two bato algebras $A$ and $B$ is a bato algebra, since the structure tensor of $A \otimes_\R B$ is $\cT_{A \otimes_\R B} = \cT_{A} \boxtimes \cT_{B}$. So we may obtain many other interesting examples by tensoring the algebras introduced above, such as some algebras of hypercomplex numbers.
    However, it is worth recalling that bato rank is submultiplicative under Kronecker products, as shown in \Cref{ex:kronecker-submultiplicative}.
\end{example}

\section*{Acknowledgments}
I would like to thank my advisor Anna Seigal for helpful discussions. 
Thanks also to Paula Esquerra for helping me enumerate the isotopy classes of small partial Latin squares.
I received funding from “la Caixa” Foundation (ID 100010434), under the agreement LCF/BQ/EU23/12010097, and from Real Colegio Complutense at Harvard University.

\bibliographystyle{abbrvnat}
\bibliography{references}

\newpage
\appendix
\section{A census on small maximal partial Latin hyperrectangles}\label{sec:small-latin}

The data in \Cref{tab:small-bato-cardinalities} were obtained by exhaustive search over partial Latin hyperrectangles. For each format, we tested maximality under inclusion and grouped the resulting maximal objects into isotopy classes under independent permutations of the
indices in each coordinate.

\begin{table}[htbp]
\centering
\resizebox{\textwidth}{!}{
\begin{tabular}{c|c|c}
$n_1, \dots, n_d$ &
\# classes
& Cardinalities of each class \\
\hline
  \(2, 2, 2\)
    & \(2\)
    & \(2, 4\) \\
      \(2, 2, 3\)
    & \(2\)
    & \(4^{\times 2}\) \\
      \(2, 2, 4\)
    & \(3\)
    & \(4^{\times 3}\) \\
    \(2, 2, 5\)
    & \(3\)
    & \(4^{\times 3}\) \\
      \(2, 3, 3\)
    & \(2\)
    & \(5, 6\) \\
      \(2, 3, 4\)
    & \(3\)
    & \(6^{\times 3}\) \\
    \(2, 3, 5\)
    & \(4\)
    & \(6^{\times 4}\) \\
  \(2, 4, 4\)
    & \(4\)
    & \(6, 7, 8^{\times 2}\) \\
  \(2, 4, 5\)
    & \(5\)
    & \(8^{\times 5}\) \\
    \(3, 3, 3\)
    & \(4\)
    & \(5, 7^{\times 2}, 9\) \\
    \(3, 3, 4\)
    & \(6\)
    & \(7, 8, 9^{\times 4}\) \\
  \(3, 3, 5\)
    & \(13\)
    & \(9^{\times 13}\) \\
  \(3, 4, 4\)
    & \(12\)
    & \(8^{\times 2}, 9^{\times 3}, 10^{\times 4}, 11, 12^{\times 2}\) \\
  \(3, 4, 5\)
    & \(21\)
    & \(10^{\times 2}, 11^{\times 7}, 12^{\times 12}\) \\
  \(3, 5, 5\)
    & \(41\)
    & \(11^{\times 2}, 12^{\times 9}, 13^{\times 21}, 14^{\times 6}, 15^{\times 3}\) \\
  \(4, 4, 4\)
    & \(49\)
    & \(8, 10^{\times 3}, 12^{\times 32}, 13^{\times 6}, 14^{\times 5}, 16^{\times 2}\) \\
  \(4, 4, 5\)
    & \(122\)
    & \(12^{\times 13}, 13^{\times 10}, 14^{\times 55}, 15^{\times 22}, 16^{\times 22}\) \\
  \(4, 5, 5\)
    & \(600\)
    & \(13^{\times 3}, 14^{\times 4}, 15^{\times 84}, 16^{\times 217}, 17^{\times 208}, 18^{\times 69}, 19^{\times 12}, 20^{\times 3}\) \\
  \(5, 5, 5\)
    & \(6490\)
    & \(13, 15^{\times 5}, 17^{\times 303}, 18^{\times 594}, 19^{\times 2806}, 20^{\times 1628}, 21^{\times 1028}, 22^{\times 92}, 23^{\times 31}, 25^{\times 2}\) \\
    \hline
  \(2, 2, 2, 2\)
    & \(8\)
    & \(4^{\times 6}, 5, 8\) \\
    \(2, 2, 2, 3\)
    & \(10\)
    & \(6^{\times 4}, 8^{\times 6}\) \\
  \(2, 2, 2, 4\)
    & \(29\)
    & \(8^{\times 29}\) \\
  \(2, 2, 2, 5\)
    & \(56\)
    & \(8^{\times 56}\) \\
  \(2, 2, 3, 3\)
    & \(23\)
    & \(6, 7^{\times 2}, 8^{\times 2}, 9, 10^{\times 14}, 11, 12^{\times 2}\) \\
  \(2, 2, 3, 4\)
    & \(81\)
    & \(8^{\times 2}, 10^{\times 9}, 11^{\times 22}, 12^{\times 48}\) \\
  \(2, 2, 3, 5\)
    & \(455\)
    & \(12^{\times 455}\) \\
  \(2, 2, 4, 4\)
    & \(346\)
    & \(8, 11^{\times 2}, 12^{\times 104}, 13^{\times 93}, 14^{\times 106}, 15^{\times 22}, 16^{\times 18}\) \\
  \(2, 2, 4, 5\)
    & \(1512\)
    & \(12^{\times 6}, 13^{\times 8}, 14^{\times 161}, 15^{\times 462}, 16^{\times 875}\) \\
  \(2, 3, 3, 3\)
    & \(200\)
    & \(9^{\times 2}, 10^{\times 9}, 11^{\times 19}, 12^{\times 55}, 13^{\times 30}, 14^{\times 65}, 15^{\times 18}, 16, 18\) \\
  \(2, 3, 3, 4\)
    & \(1698\)
    & \(12^{\times 4}, 13^{\times 29}, 14^{\times 352}, 15^{\times 378}, 16^{\times 571}, 17^{\times 260}, 18^{\times 104}\) \\
  \(2, 3, 3, 5\)
    & \(22668\)
    & \(14^{\times 22}, 15^{\times 113}, 16^{\times 1192}, 17^{\times 6097}, 18^{\times 15244}\) \\
  \(2, 3, 4, 4\)
    & \(33333\)
    & \(14^{\times 11}, 15^{\times 11}, 16^{\times 391}, 17^{\times 1895}, 18^{\times 9560}, 19^{\times 11067}, 20^{\times 7280}, 21^{\times 2440}, 22^{\times 607}, 23^{\times 52}, 24^{\times 19}\) \\
      \(3, 3, 3, 3\)
    & \(10246\)
    & \(9, 12^{\times 9}, 14^{\times 102}, 15^{\times 663}, 16^{\times 1110}, 17^{\times 2754}, 18^{\times 3084}, 19^{\times 999}, 20^{\times 1137}, 21^{\times 355}, 22^{\times 24}, 23^{\times 6}, 24, 27\) \\
    \hline
  \(2, 2, 2, 2, 2\)
    & \(98\)
    & \(8^{\times 75}, 9^{\times 10}, 10^{\times 11}, 12, 16\) \\
  \(2, 2, 2, 2, 3\)
    & \(366\)
    & \(8^{\times 6}, 10^{\times 22}, 11^{\times 12}, 12^{\times 166}, 13^{\times 84}, 14^{\times 24}, 16^{\times 52}\) \\
  \(2, 2, 2, 2, 4\)
    & \(4996\)
    & \(8, 12^{\times 42}, 13^{\times 48}, 14^{\times 576}, 15^{\times 480}, 16^{\times 3849}\) \\
  \(2, 2, 2, 2, 5\)
    & \(99853\)
    & \(16^{\times 99853}\) \\
  \(2, 2, 2, 3, 3\)
    & \(9632\)
    & \(12^{\times 24}, 13^{\times 100}, 14^{\times 767}, 15^{\times 957}, 16^{\times 1815}, 17^{\times 1907}, 18^{\times 1199}, 19^{\times 1293}, 20^{\times 1336}, 21^{\times 184}, 22^{\times 39}, 23^{\times 5}, 24^{\times 6}\) \\
\end{tabular}
}
\vspace{1em}
\caption{Number of maximal isotopy classes of $\Latin(n_1, \dots, n_d)$ and their respective cardinalities. Superscripts indicate multiplicities.}
\label{tab:small-bato-cardinalities}
\end{table}

\end{document}